\DeclareMathOperator{\vol}{vol}
\colorlet{darkred}{red!40!black}
\newlength{\picwidth}
\newcommand{\loc}{\mathrm{loc}}
\newdelim{\ip}{\langle}{\rangle}
\newcommand{\leqnomode}{\tagsleft@true\let\veqno\@@leqno}
\newcommand{\reqnomode}{\tagsleft@false\let\veqno\@@eqno}
\newcommand{\mylabel}[2]{\def\@currentlabel{#2}\label{#1}}
\newcommand{\sfrac}[2]{#1/#2}
\newcommand{\FE}{\mathcal{E}}  
\begin{document}
\title[Phase separation in the advective Cahn--Hilliard equation]%
  {Phase separation in the \\ advective Cahn--Hilliard equation}
\author[Feng]{Yu Feng}
\address{%
  Department of Mathematics, University of Wisconsin --
  Madison, Madison, WI 53706}
\email{feng65@wisc.edu}

\author[Feng]{Yuanyuan Feng}
\address{%
  Department of Mathematical Sciences,
  Carnegie Mellon University,
  Pittsburgh, PA 15213}
\email{yuanyuaf@math.cmu.edu}

\author[Iyer]{Gautam Iyer}
\address{%
  Department of Mathematical Sciences,
  Carnegie Mellon University,
  Pittsburgh, PA 15213}
\email{gautam@math.cmu.edu}

\author[Thiffeault]{Jean-Luc Thiffeault}
\address{%
  Department of Mathematics, University of Wisconsin --
  Madison, Madison, WI 53706}
\email{jeanluc@math.wisc.edu}
\thanks{%
  This material is based upon work partially supported by
  the National Science Foundation, under grant
  DMS-1814147,
  and the Center for Nonlinear Analysis.
}
\keywords{Cahn--Hilliard equation, enhanced dissipation, mixing.}
\subjclass[2010]{Primary
  76F25; 
  Secondary
  37A25, 
  76R50. 
}
\begin{abstract}
  The Cahn--Hilliard equation is a classic model of phase separation in binary mixtures that exhibits spontaneous coarsening of the phases.
  We study the Cahn--Hilliard equation with an imposed advection term in order to model the stirring and eventual mixing of the phases.
  The main result is that if the imposed advection is sufficiently mixing then no phase separation occurs, and the solution instead converges exponentially to a homogeneous mixed state.
  The mixing effectiveness of the imposed drift is quantified in terms of the dissipation time of the associated advection-hyperdiffusion equation, and we produce examples of velocity fields with a small dissipation time.
  We also study the relationship between this quantity and the dissipation time of the standard advection-diffusion equation.
\end{abstract}
\maketitle

\section{Introduction}

Spinodal decomposition refers to the phase separation of a binary mixture, such as alloys that are quenched below their critical temperature.  A well-studied model is the Cahn--Hilliard equation~\cites{CahnHilliard58,Cahn61}, where the evolution of the normalized concentration difference~$c$ between the two phases is governed by the equation
\begin{equation}\label{e:ch}
 \partial_t c +\gamma D\,\lap^2 c = D \lap(c^3 - c)\,.
\end{equation}
Here $D > 0$ is a mobility parameter, and $\sqrt{\gamma}$ is the Cahn number, which is related to the surface tension at the interface between phases.  The coefficient $\gamma D$ is a hyperdiffusion that regularizes the equation at small length scales by overcoming the destabilizing~$-D\lap c$ term.  The concentration $c$ is normalized such that the regions $\set{c = 1}$ and $\set{c = -1}$ represent domains that are pure in each phase.  For simplicity, we will only consider~\eqref{e:ch} on the $d$-dimensional torus~$\T^d$.

\setlength{\picwidth}{.23\linewidth}
\begin{figure}[ht]
  \subfigure{\includegraphics[width=\picwidth]{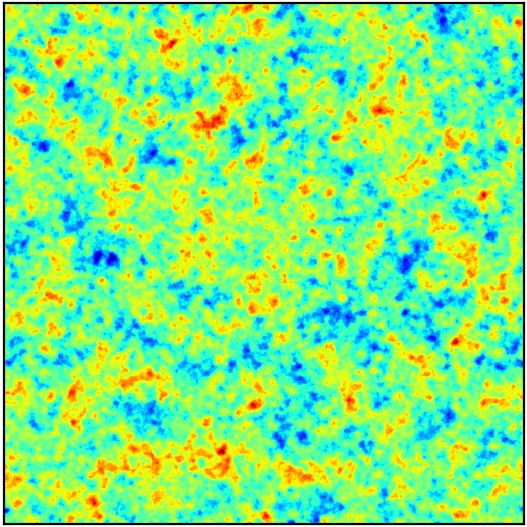}}
  \subfigure{\includegraphics[width=\picwidth]{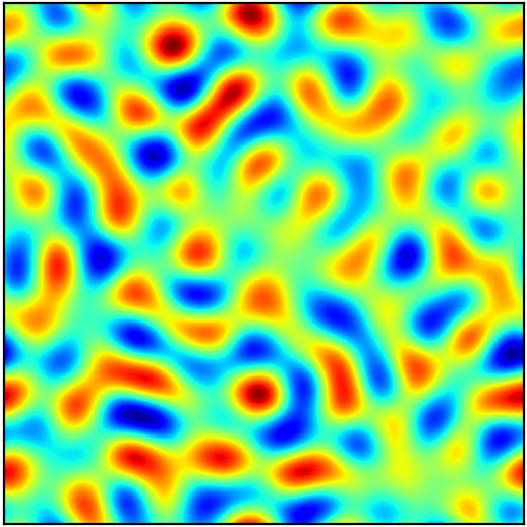}}
  \subfigure{\includegraphics[width=\picwidth]{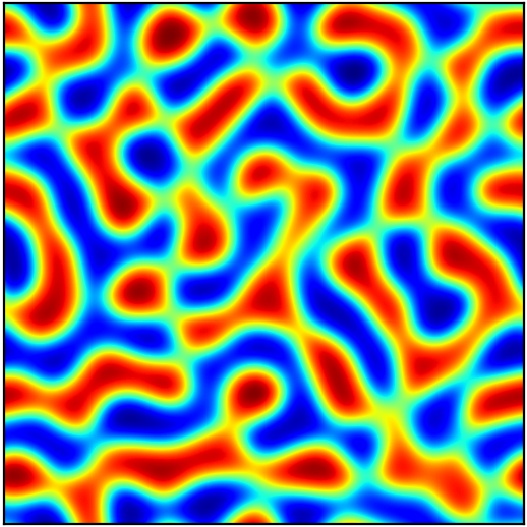}}
  \subfigure{\includegraphics[width=\picwidth]{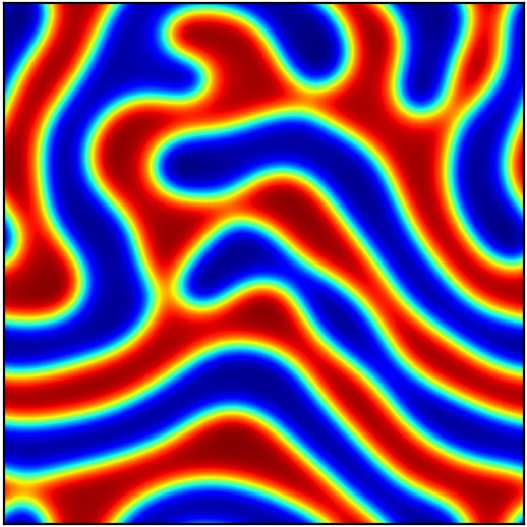}}
  \caption{The solution of the Cahn--Hilliard equation~\eqref{e:ch} on the 2D torus with $D = 0.001$, $\gamma = .01 / (2\pi)^2$ at times~$t=0$, $2$, $5$ and $20$.  The phases $c=1$ and $c=-1$ are red and blue, respectively, and $c=0$ is green.}
  \label{f:CH}
\end{figure}

When~$\gamma$ is small, solutions to~\eqref{e:ch} spontaneously form domains with $c = \pm 1$ separated by thin transition regions (see Figure~\ref{f:CH}).
This has been well studied by many authors (see for instance~\cites{ElliottSongmu86,Elliott89,Pego89}), and the underlying mechanism can be understood as follows.  The free energy of this system, $\FE$, can be decomposed into the sum of the chemical free energy, $\FE_\mathrm{chem}$, and the interfacial free energy, $\FE_\mathrm{int}$, where
\begin{equation*}
  \FE_\mathrm{chem}
    \defeq \tfrac{1}{4} \int_{\T^d} (c^2 - 1)^2 \, dx
  \qquad\text{and}\qquad
  \FE_\mathrm{int} \defeq
    \tfrac{1}{2}\gamma \int_{\T^d} \abs{\grad c}^2  \, dx\,.
\end{equation*}
Using~\eqref{e:ch}, one can directly check that $\FE$ decreases with time, and hence solutions should approach minimizers of~$\FE$ after a long time.
Minimizing the chemical free energy $\FE_\mathrm{chem}$ favors forming domains where $c = \pm 1$.
Minimizing the interfacial free energy~$\FE_\mathrm{int}$ favors interfaces of thickness~$\sqrt{\gamma}$ separating the domains.
As a result, the typical behavior of equation~\eqref{e:ch} is to spontaneously phase-separate as in Figure~\ref{f:CH}.

In this paper we study the effect of stirring on spontaneous phase separation.
When subjected to an incompressible stirring velocity field~$u(t,x)$, equation~\eqref{e:ch} is modified to
\begin{equation}\label{e:ach}
  \partial_t c + u \cdot\grad c + \gamma\lap^2 c
  = \lap (c^{3}-c) \,.
\end{equation}
For simplicity, we have set the mobility parameter~$D$ to be~$1$.
The advective Cahn--Hilliard equation~\eqref{e:ach} has has been studied by many authors~\cites{ChanPerrotEA88,LaugerLaubnerEA95,ONaraighThiffeault07,ONaraighThiffeault07a,ONaraighThiffeault08,LiuDedeEA13} for both passive and active advection.
Under a strong shear flow, for instance, it is known that solutions to~\eqref{e:ach} equilibrate along the flow direction and spontaneously phase separates in the direction perpendicular to the flow~\cites{Berthier01, Bray03,ShouChakrabarti00,HashimotoMatsuzakaEA95}.
\smallskip

Our main result is to show that if the stirring velocity field is sufficiently mixing, then no phase separation occurs.
More precisely, we show that if the \emph{dissipation time} of~$u$ is small enough, then~$c$ converges exponentially to the total concentration~$\bar c = \int_{\T^d} c_0 \, dx$, where $c_0$ denotes the initial data.
This is illustrated by the numerical simulations in Figure~\ref{f:chmixed}, where the velocity field~$u$ was chosen to be alternating horizontal and vertical shear flows with randomized phases (see~\cite{Pierrehumbert94,ONaraighThiffeault07,ONaraighThiffeault08}).
When the shear amplitude, $A$, is small, the norms of the solution settle to some non-zero value after a large time.
As the amplitude is increased, the flow mixes faster, and we see the solution decays exponentially to $\bar c = 0$.

\begin{figure}[htb]
  \includegraphics[width=.45\linewidth]{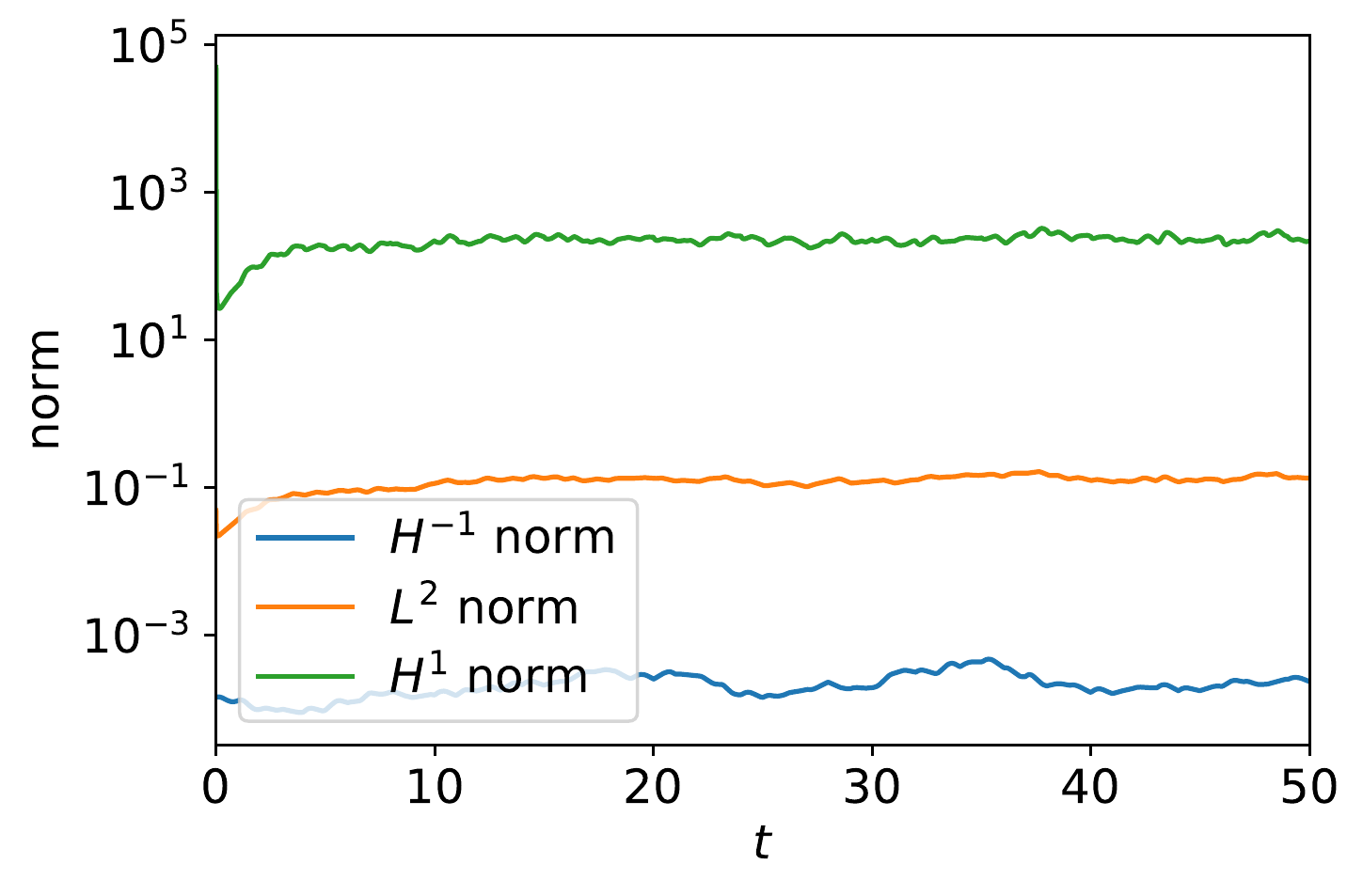}
  \hspace{.025\linewidth}
  \includegraphics[width=.45\linewidth]{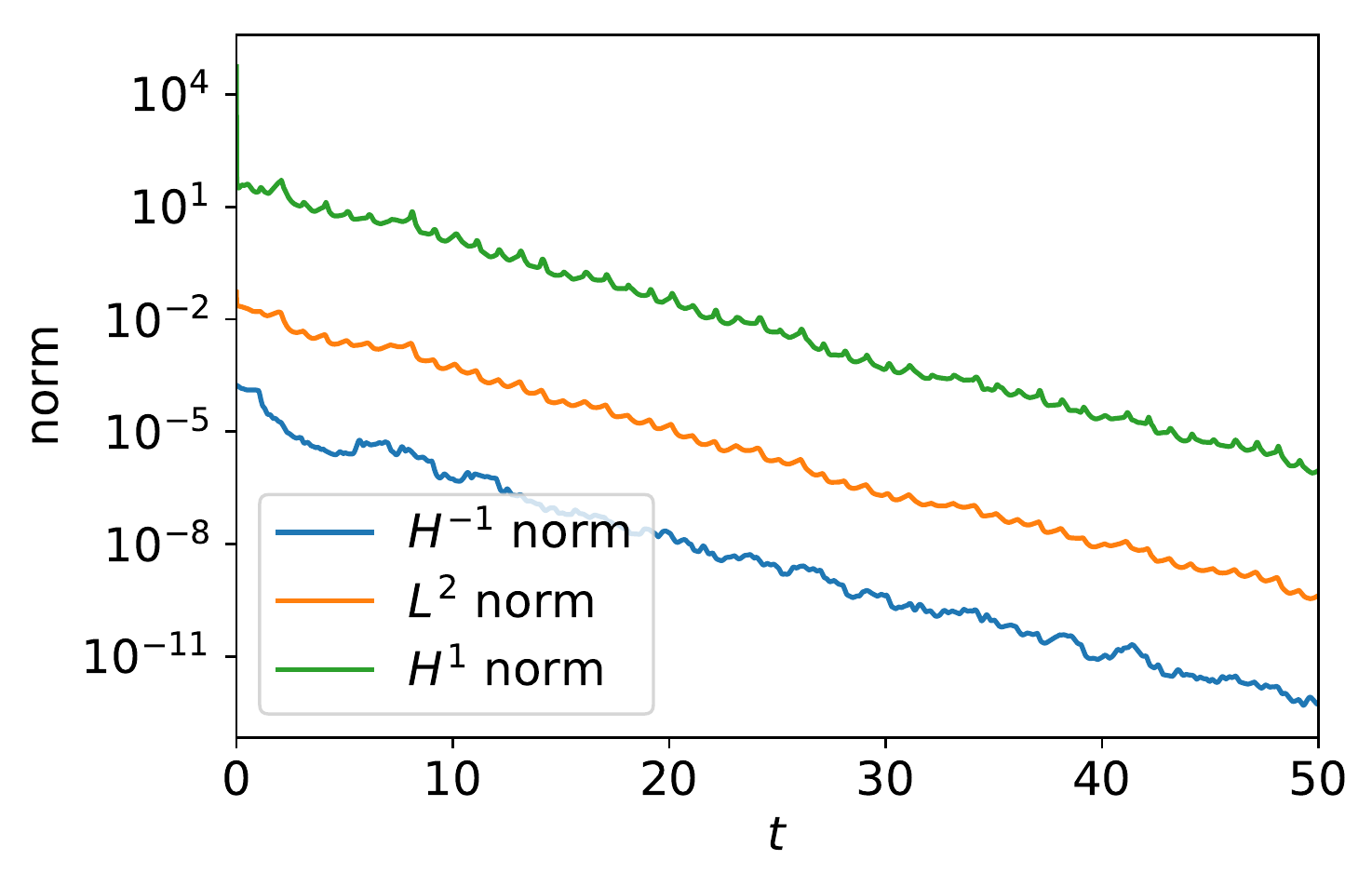}
  \caption{Decay of $H^1$, $L^2$, and $H^{-1}$ norms for the random shear flow for $A=0.5$ (left) and $A=2$ (right).  On the left the norms settle to equilibrium values; on the right they decay exponentially.}
  \label{f:chmixed}
\end{figure}
\begin{figure}[htb]
  \includegraphics[height=.3\linewidth]{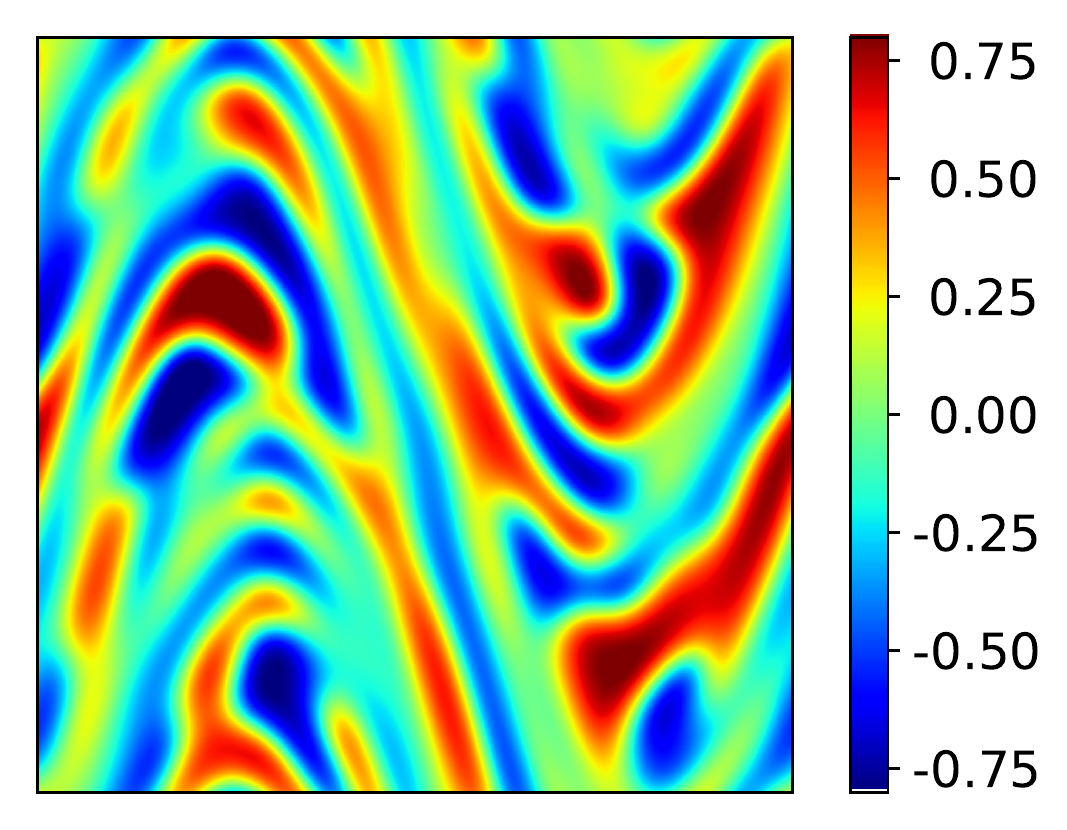}
  \quad
  \includegraphics[height=.3\linewidth]{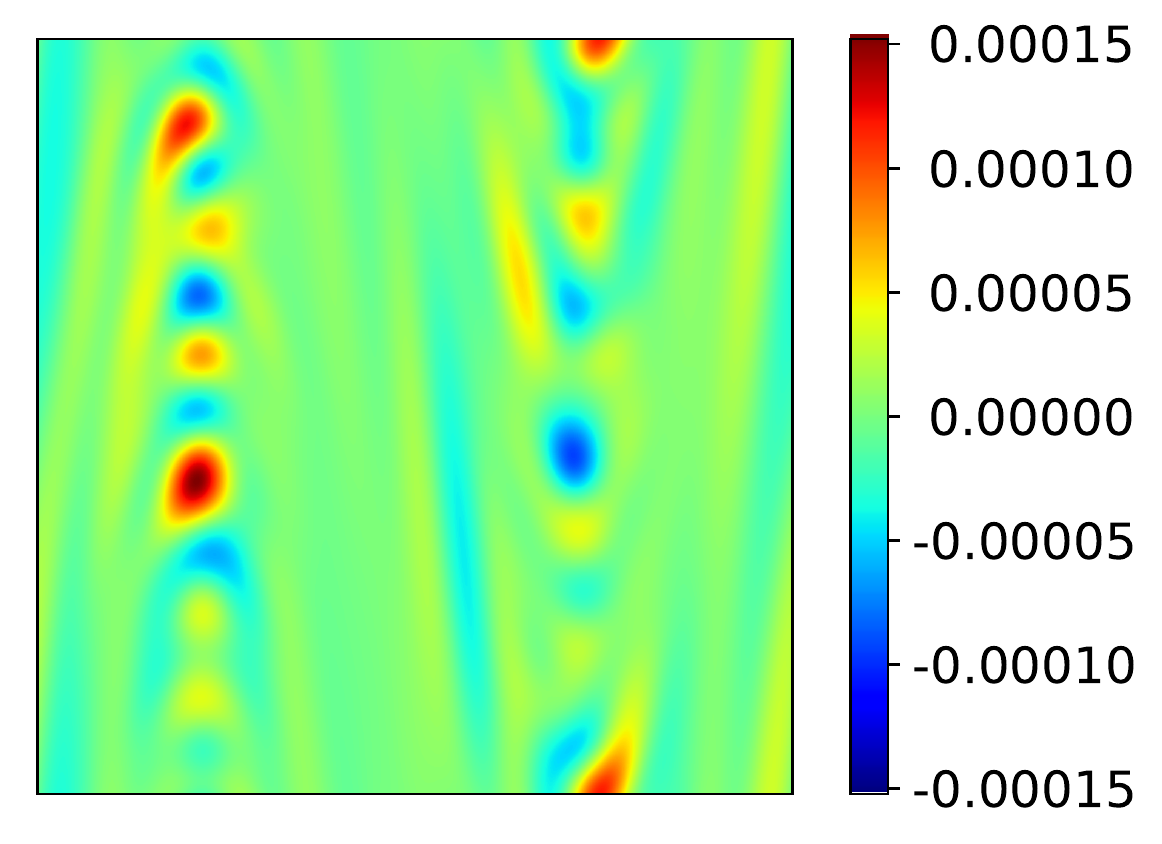}
  \caption{%
    Final concentration $c$ for the two cases in Figure~\ref{f:chmixed}.
  }
  \label{f:chmixed_final}
\end{figure}

\subsection{Decay of the advective Cahn--Hilliard equation}\label{s:introCHdecay}

To state our main result, we need to first introduce the notion of \emph{dissipation time}.  Let $u$ be a divergence-free vector field and consider the equation
\begin{equation}\label{e:ad}
  \partial_t \theta + u \cdot \grad \theta + \gamma(-\lap)^{\alpha} \theta = 0,
\end{equation}
with $\alpha > 0$, periodic boundary conditions, and mean-zero initial data.
For~$\alpha=1$ this is the \emph{advection-diffusion equation}; for $\alpha=2$ it is the \emph{advection-hyperdiffusion equation}.  Incompressibility of~$u$ and the Poincar\'e inequality immediately imply that $\norm{\theta_t}_{L^2}$ is decreasing as a function of~$t$, and
\begin{equation}\label{e:ee}
  \norm{\theta(s+t)}_{L^2} \leq e^{-(2\pi)^{2\alpha} \gamma t}\, \norm{\theta(s)}_{L^2}\,.
\end{equation}
Thus, we are guaranteed
\begin{equation}\label{e:dtime1}
\norm{\theta(s + t)}_{L^2} \leq \tfrac{1}{2} \norm{\theta(s)}_{L^2}\,,
\quad\text{for every}\quad
t \geq \frac{\ln 2}{ (2\pi)^{2\alpha} \gamma}\,,
\end{equation}
and every $s \geq 0$.
However, $u$ generates gradients through filamentation, which causes solutions to dissipate $\norm{\theta(t)}_{L^2}$ faster.  This may result in the lower bound in~\eqref{e:dtime1} being attained at much smaller times, and the smallest time~$t$ at which this happens is known as the \emph{dissipation time} (see for instance~\cites{FannjiangWoowski03,FengIyer19}).

\begin{definition}[Dissipation time]\label{d:dissipationTime}
  Let $\mathcal S_{s, t}^{u, \alpha}$ be the solution operator to~\eqref{e:ad} on $\T^d \times (0, \infty)$.
  That is, for any $f \in L^2(\T^d)$, the function $\theta(t) = \mathcal S_{s, t}^{u, \alpha} f$ solves~\eqref{e:ad} with initial data $\theta(s) = f$, and periodic boundary conditions.
  The \emph{dissipation time} of~$u$ is
  \begin{equation}\label{e:dtimeDef}
    \tau^*_\alpha(u, \gamma) \defeq  \inf \set[\Big]{ t \geq 0 \st  \norm{ \mathcal S^{u, \alpha}_{s, s+t} }_{\dot L^2 \to \dot L^2 } \leq \tfrac{1}{2} \text{ for all $s\geq 0$} }\,.
  \end{equation}
  Here $\dot L^2$ is the space of all mean-zero, square integrable functions on the torus~$\T^d$.
\end{definition}

While this definition makes sense for any~$\alpha > 0$, we are mainly interested in the case when~$\alpha$ is either~$1$ or~$2$.
Note that~\eqref{e:dtime1} implies $\tau^*_\alpha( u, \gamma ) \leq O(1/\gamma)$ as $\gamma \to 0$.
If, however, $u$ is mixing, then this can be dramatically improved (see for instance~\cites{ConstantinKiselevEA08,Zlatos10,CotiZelatiDelgadinoEA18,Wei18,FengIyer19,Feng19}).
In fact \cite{FengIyer19} bound $\tau^*_1(u, \gamma)$ explicitly in terms of the mixing rate of~$u$.
Moreover, when~$u$ is exponentially mixing, \cites{CotiZelatiDelgadinoEA18,Wei18,Feng19} show that $\tau^*_1(u, \gamma) \leq O( \abs{\ln \gamma}^2 )$ as $\gamma \to 0$.
\medskip

With this notion, we can now state our main result.
\begin{theorem}\label{t:chmix}
Let $d \in \set{2, 3}$, $u\in L^\infty ((0,\infty);W^{1,\infty}(\T^d))$, and $c$ be the solution of~\eqref{e:ach} with initial data~$c_0\in H^2(\T^d)$.
\begin{asparaenum}
  \item
    When~$d = 2$, for any  $\beta > 1$, $\mu>0$, there exists a time
    \begin{equation*}
      T_0
	= T_0( \norm{c_0 - \bar c}_{L^2}, \bar c, \beta, \gamma, \mu)
    \end{equation*}
    such that if $\tau^*_2(u, \gamma) < T_0$, then for every~$t \geq 0$, we have
    \begin{align}\label{e:globall2decay}
      \norm{c(t)-\bar c}_{L^2}\leq \beta e^{-\mu t}\norm{c_0-\bar c}_{L^2}\,.
    \end{align}

  \item
    When~$d = 3$, for any $\beta > 1$, $\mu>0$, there exists a time
    \begin{equation*}
      T_1
	= T_1( \norm{c_0 - \bar c}_{L^2}, \bar c, \beta, \gamma, \mu)
    \end{equation*}
    such that if
    \begin{equation}\label{e:3dSmallness}
      (1 + \norm{\grad u}_{L^\infty})^{1/2} \tau^*_2(u, \gamma) < T_1\,,
    \end{equation}
    then~\eqref{e:globall2decay} still holds for every $t \geq 0$.
\end{asparaenum}
\end{theorem}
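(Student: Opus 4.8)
The plan is to track the deviation $\theta\defeq c-\bar c$ from the conserved mean (note that $\bar c$ is constant in time, since $u$ is divergence-free and both $\lap^2$ and $\lap$ preserve averages). Expanding $(\theta+\bar c)^3-(\theta+\bar c)$ and discarding the annihilated constant shows that $\theta$ solves the forced advection--hyperdiffusion equation~\eqref{e:ad} with $\alpha=2$,
\begin{equation*}
  \partial_t\theta + u\cdot\grad\theta + \gamma\lap^2\theta
    = N\defeq\lap\bigl(\theta^3 + 3\bar c\,\theta^2 + (3\bar c^2-1)\theta\bigr)\,.
\end{equation*}
The cubic term is dissipative, but the linear piece $(3\bar c^2-1)\lap\theta$ is destabilizing precisely when $\abs{\bar c}<1/\sqrt3$; overcoming this instability is the whole point of imposing a mixing drift, and the dissipation time is the quantity that measures whether the drift wins.

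First I would set up an iteration over consecutive intervals of length $\tau\defeq\tau^*_2(u,\gamma)$ using the Duhamel formula
\begin{equation*}
  \theta(s+\tau)
    = \mathcal S^{u,2}_{s,s+\tau}\theta(s)
      + \int_s^{s+\tau}\mathcal S^{u,2}_{r,s+\tau}N(r)\,dr\,.
\end{equation*}
By Definition~\ref{d:dissipationTime} the first term has $\dot L^2$ norm at most $\tfrac12\norm{\theta(s)}_{L^2}$, while~\eqref{e:ee} gives $\norm{\mathcal S^{u,2}_{r,t}}_{\dot L^2\to\dot L^2}\le1$. Writing $N=\lap M$ with $M=\theta^3+3\bar c\theta^2+(3\bar c^2-1)\theta$ and invoking the parabolic smoothing of the hyperdiffusion, $\norm{\mathcal S^{u,2}_{r,t}\lap f}_{L^2}\lesssim(\gamma(t-r))^{-1/2}\norm{f}_{L^2}$ (a bound one establishes for the full operator $\mathcal S^{u,2}$ by a standard energy argument), the forcing contributes
\begin{equation*}
  \norm{\int_s^{s+\tau}\mathcal S^{u,2}_{r,s+\tau}N(r)\,dr}_{L^2}
    \lesssim \int_s^{s+\tau}\frac{\norm{M(r)}_{L^2}}{\sqrt{\gamma(s+\tau-r)}}\,dr
    \lesssim \sqrt{\tfrac{\tau}{\gamma}}\,\sup_{[s,s+\tau]}\norm{M}_{L^2}\,,
\end{equation*}
and $\norm{M}_{L^2}\lesssim\norm{\theta}_{L^6}^3+\abs{\bar c}\,\norm{\theta}_{L^4}^2+\norm{\theta}_{L^2}$.

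The remaining work is to bound $\sup_{[s,s+\tau]}\norm{M}_{L^2}$ by $\norm{\theta(s)}_{L^2}$. Testing the equation against $\theta$, the advection term drops and completing the square recombines the nonlinear and destabilizing contributions into $-3\int_{\T^d}c^2\abs{\grad\theta}^2+\norm{\grad\theta}_{L^2}^2$ (with $c=\theta+\bar c$), giving
\begin{equation*}
  \tfrac12\tfrac{d}{dt}\norm{\theta}_{L^2}^2 + \gamma\norm{\lap\theta}_{L^2}^2
    \le \norm{\grad\theta}_{L^2}^2\,;
\end{equation*}
after interpolating $\norm{\grad\theta}_{L^2}^2\le\tfrac\gamma2\norm{\lap\theta}_{L^2}^2+\tfrac1{2\gamma}\norm{\theta}_{L^2}^2$ this yields the crude growth bound $\norm{\theta(t)}_{L^2}\le e^{(t-s)/(2\gamma)}\norm{\theta(s)}_{L^2}$. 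A companion $H^1$ bound comes from the free energy: testing~\eqref{e:ach} against $\mu=c^3-c-\gamma\lap c$ gives
\begin{equation*}
  \tfrac{d}{dt}\FE = -\norm{\grad\mu}_{L^2}^2 + \gamma\int_{\T^d}(\grad c)^\top(\grad u)\,\grad c\,dx\,,
\end{equation*}
so $\norm{\grad c}_{L^2}^2\lesssim\FE$ grows at most at rate $O(\norm{\grad u}_{L^\infty})$ across the interval. Feeding these into the Gagliardo--Nirenberg estimates for $\norm{M}_{L^2}$ reveals the dimensional split: in $d=2$ one has $\norm{\theta}_{L^6}^3\lesssim\norm{\theta}_{L^2}\norm{\grad\theta}_{L^2}^2$, so the cubic term carries an extra decaying factor of $\norm{\theta}_{L^2}$ and the nonlinear contribution can be made small by shrinking $\tau$ alone; in $d=3$ the critical embedding $\dot H^1\hookrightarrow L^6$ forces $\norm{\theta}_{L^6}^3\lesssim\norm{\grad\theta}_{L^2}^3$, so the $H^1$ bound — and with it the factor $(1+\norm{\grad u}_{L^\infty})^{1/2}$ — must enter the smallness condition~\eqref{e:3dSmallness}.

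Assembling the pieces yields a one-step contraction
\begin{equation*}
  \norm{\theta(s+\tau)}_{L^2}\le\Bigl(\tfrac12+\Phi\Bigr)\norm{\theta(s)}_{L^2}\,,
\end{equation*}
where the nonlinear feedback $\Phi$ is controlled by $\sqrt{\tau/\gamma}$ times a power of $\norm{\theta(s)}_{L^2}$ (and of $\norm{\grad u}_{L^\infty}$ when $d=3$). Choosing $T_0$ (respectively $T_1$) as a function of $\norm{c_0-\bar c}_{L^2}$, $\bar c$, $\gamma$, $\beta$, $\mu$ small enough that $\tfrac12+\Phi<1$ and $-\ln(\tfrac12+\Phi)\ge\mu\tau$ makes the iteration a genuine contraction, and since $\Phi$ shrinks as $\norm{\theta}$ decreases the contraction factor only improves down the chain, so an induction closes the bootstrap and gives geometric decay along $\{n\tau\}$, which the growth bound upgrades across each interval to the continuous estimate~\eqref{e:globall2decay} with the prescribed $\beta$ and $\mu$. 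The main obstacle is exactly this bootstrap: one must guarantee that the cubic feedback does not accumulate over infinitely many intervals, and in three dimensions this is delicate because $\dot H^1\hookrightarrow L^6$ is critical and the supporting $H^1$ a priori bound degrades with $\norm{\grad u}_{L^\infty}$ — which is precisely what necessitates the modified smallness assumption~\eqref{e:3dSmallness} rather than smallness of $\tau^*_2$ alone.
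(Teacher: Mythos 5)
Your proposal shares the paper's outer skeleton (Duhamel over intervals of length $\tau^*_2$, halving from the definition of dissipation time, iteration plus a crude growth bound to pass to continuous time), but two of its load-bearing steps do not hold up, and the missing ingredient is exactly the paper's case-split. The first failure is the smoothing estimate $\norm{\mathcal S^{u,2}_{r,t}\lap f}_{L^2}\leq C\,(\gamma(t-r))^{-1/2}\norm{f}_{L^2}$, which you assert for the full advection--hyperdiffusion operator ``by a standard energy argument.'' This is true for the pure semigroup $e^{-\gamma t\lap^2}$, but with the drift included the constant cannot be taken independent of $u$: by duality this is an $L^2\to\dot H^2$ smoothing bound for the adjoint advection--hyperdiffusion equation, and advection genuinely creates second derivatives --- filamentation is precisely the mechanism behind enhanced dissipation --- so any energy-type proof yields a constant that grows with $\norm{\grad u}_{L^\infty}(t-r)$ (and the commutator $[\lap, u\cdot\grad]$ requires more regularity than the $W^{1,\infty}$ assumed in the theorem). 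This dependence is fatal in the regime of interest, where $\tau^*_2$ is small exactly because $\norm{\grad u}_{L^\infty}$ is enormous; as noted after Theorem~\ref{t:chmix}, $(1+\norm{\grad u}_{L^\infty})\,\tau^*_2$ cannot be made small, so products like $\norm{\grad u}_{L^\infty}\tau^*_2$ hidden in your constants are not controlled by the hypotheses. The same defect infects your bound on $\sup_{[s,s+\tau]}\norm{M}_{L^2}$: even in $d=2$, the factor $\norm{\grad\theta}_{L^2}^2$ in $\norm{\theta}_{L^2}\norm{\grad\theta}_{L^2}^2$ is only controlled \emph{time-uniformly} through the free-energy bound, whose growth rate is $O(\norm{\grad u}_{L^\infty})$. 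So your 2D contraction constant secretly depends on $u$, the claim that in 2D ``the nonlinear contribution can be made small by shrinking $\tau$ alone'' does not follow from your own estimates, and at best you would prove a 2D statement with a $u$-dependent smallness condition --- strictly weaker than part (1), where $T_0$ depends only on $\norm{c_0-\bar c}_{L^2},\bar c,\beta,\gamma,\mu$.

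The missing idea is the dichotomy on the time average of $\norm{\lap c}_{L^2}^2$, which is what lets the paper avoid any time-uniform (hence $u$-dependent) bound in 2D. The paper keeps the Laplacian on the nonlinearity and estimates $\int\norm{\lap(c^3-c)}_{L^2}\,ds$ directly: in 2D Gagliardo--Nirenberg gives $\norm{\lap(c^3-c)}_{L^2}\leq C(1+\bar c^2)(1+\norm{c-\bar c}_{L^2})(\norm{\lap c}_{L^2}+\norm{\lap c}_{L^2}^2)$, where the low-order factor is $\norm{c-\bar c}_{L^2}$ (controlled $u$-independently by the doubling bound~\eqref{e:doubling}), and the time integral of $\norm{\lap c}_{L^2}^2$ is handled by the case split: if it exceeds $\frac{\beta+2\gamma\mu}{\gamma^2}\norm{c(t_0)-\bar c}_{L^2}^2$, the plain energy inequality already gives the decay (Lemma~\ref{l:h1meanlargev2}); if not, that very smallness~\eqref{e:h2small} is what makes the Duhamel correction small. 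Note that without this dichotomy the argument cannot close: the unconditional energy bound only gives $\gamma\int_0^\tau\norm{\lap c}_{L^2}^2\,ds\leq e^{\tau/\gamma}\norm{c_0-\bar c}_{L^2}^2$, which does \emph{not} vanish as $\tau\to 0$. The $u$-dependent free-energy bound (Lemma~\ref{l:H1bound}) and the factor $(1+\norm{\grad u}_{L^\infty})^{1/2}$ enter only in 3D, where the Gagliardo--Nirenberg exponents force $\norm{\grad c}_{L^2}$ as the low-order factor; and even there the paper must use Chebyshev's inequality together with~\eqref{e:h2small3} to select an intermediate time $t_1\in[0,\tau^*_2]$ at which $\norm{\lap c(t_1)}_{L^2}$, and hence $\FE(t_1)$, is controlled by $\norm{c(t_0)-\bar c}_{L^2}$. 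Your bootstrap glosses over this last point: along the iteration you only control $\norm{c(n\tau)-\bar c}_{L^2}$, not $\FE$ at the restart times, so the $H^1$ bound you invoke at the start of each interval is not actually available.
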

\begin{remark}
  The times $T_0$ and $T_1$ can be computed explicitly, as can be seen from the proof of the theorem and equations~\eqref{e:T0prime} and \eqref{e:T1prime}, below.

We emphasize that $T_0$ and $T_1$ only depend on the mean concentration $\bar c$, the variance of the initial data $\norm{c_0 - \bar c}_{L^2}^2$, the Cahn number $\sqrt{\gamma}$ and the exponential decay constants $\beta, \mu$.
    Once $T_0$ and $T_1$ are determined from these parameters, in order to apply Theorem~\ref{t:chmix}, we need to produce velocity fields $u$ that satisfy~$\tau^*_2(u, \gamma) < T_0$ when $d = 2$, and the condition~\eqref{e:3dSmallness} when~$d = 3$.
    We do this in Section~\ref{s:introSmallDTime}, below by using sufficiently mixing flows with a large amplitude.
    In general, however, smallness of the dissipation time (such as the conditions required in Theorem~\ref{t:chmix}) are weaker than mixing, and there may be simpler examples of velocity fields that satisfy the requirements of Theorem~\ref{t:chmix}.

\end{remark}

Several authors have used mixing properties of the advection term to quench reactions, prevent blow-up, and stem the growth of non-linear PDEs (see for instance~\cites{FannjiangKiselevEA06,HouLei09,BerestyckiKiselevEA10,KiselevXu16,BedrossianHe17,IyerXuEA19}).
Our results are similar in spirit to those in~\cite{IyerXuEA19}, where the authors used related ideas to prove decay of solutions to a large class of nonlinear parabolic equations.
These results were formulated for second-order PDEs where the diffusive term is the Laplacian, but they can easily be generalized to apply when the diffusive term is the bi-Laplacian as we have in~\eqref{e:ach}.
Unfortunately, the assumptions required for the results in~\cite{IyerXuEA19} to apply are not satisfied by the nonlinear term, even when~$d = 2$, and thus we cannot use them here.

Our 3D result is qualitatively different (and weaker) from the 2D case, and from the results in~\cites{IyerXuEA19}.
Indeed, Theorem~\ref{t:chmix} in 2D and all the results in~\cites{IyerXuEA19} only rely on smallness of the dissipation times $\tau^*_1$ or $\tau^*_2$.
In 3D, however, Theorem~\ref{t:chmix} now requires smallness of $(1 + \norm{\grad u})_{L^\infty}^{1/2} \tau^*_2$.
  The reason for this is that in 2D we are able to estimate the nonlinear term $\norm{\lap( c^3 - c) }_{L^2} $ by $\norm{c - \bar c}_{L^2} \norm{\lap c}_{L^2}^2$ in 2D, and by $\norm{\grad c}_{L^2} \norm{\lap c}_{L^2}^2$ in 3D.
  The growth of $\norm{c - \bar c}_{L^2}$ can easily be controlled independent of the advecting flow, and so the 2D result can be formulated only in terms of the dissipation time $\tau^*_2(u, \gamma)$.
  The quantity $\norm{\grad c}_{L^2}$, however, is expected to depend intrinsically on (and grow with) the advecting flow, and the result in 3D involves both $\tau^*_2(u, \gamma)$ and the size of the flow (condition~\eqref{e:3dSmallness}).

In the next section we produce velocity fields where this is arbitrarily small.
We remark, however, that while we can find velocity fields for which $(1 + \norm{\grad u}_{L^\infty})^{1/2} \tau^*_2$ is arbitrarily small, it appears impossible to produce velocity fields for which $(1 + \norm{\grad u}_{L^\infty}) \tau^*_2$ is arbitrarily small.
To see this, the proof in~\cite{Poon96} (see also equation~(9) in~\cite{MilesDoering18}) can be easily adapted to obtain the lower bound
\begin{equation*}
  \tau^*_2(u, \gamma) \geq
    \frac{1}{C \norm{u}_{C^2}}
    \ln \paren[\Big]{ 1 +
      \frac{C \norm{u}_{C^2}}{\gamma}
    }
\end{equation*}
for some explicit dimensional constant~$C$.
(Here by $\norm{u}_{C^2}$ we mean the spatial $C^2$ norm $\sup_t \norm{u}_{C^2(\T^d)}$.)  When $\tau^*_2(u, \gamma)$ is small, we expect $\norm{u}_{C^2}$ to be large, and in this case the above shows $(1 + \norm{u}_{C^2}) \tau^*_2(u, \gamma)$ grows at least logarithmically with~$\norm{u}_{C^2}$.

\subsection{Incompressible velocity fields with small dissipation time}\label{s:introSmallDTime}

In order to apply Theorem~\ref{t:chmix}, we need to produce incompressible velocity fields~$u$ for which $\tau^*_2(u, \gamma)$ is arbitrarily small when $d = 2$, and for which $(1 + \norm{\grad u}_{L^\infty}^{1/2}) \tau^*_2(u, \gamma)$ is arbitrarily small when~$d = 3$.
We do this here by rescaling \emph{mixing flows}.  This has been studied previously by~\cites{ConstantinKiselevEA08,KiselevShterenbergEA08,Zlatos10,CotiZelatiDelgadinoEA18,FengIyer19,Feng19} when the diffusive term is the standard Laplacian.
With minor modification, the proofs can be adapted to our context, where the diffusive term is the bi-Laplacian.

\begin{proposition}\label{p:dtimeMix}
  Let~$v \in L^\infty( [0, \infty); C^2( \T^d ) )$, and define $u_A(x, t) = A v(x, At)$.
  If $v$ is \emph{weakly mixing} with rate function~$h$, then
  \begin{equation*}
    \tau^*_2( u_A, \gamma ) \xrightarrow{A \to \infty} 0 \, .
  \end{equation*}
  If further~$v$ is \emph{strongly mixing} with rate function $h$, and
  \begin{equation}\label{e:thtVanish}
    t\, h(t) \xrightarrow{t \to \infty} 0\,,
  \end{equation}
  then
  \begin{equation*}
    (1 + \norm{\grad u_A}_{L^\infty})^{1/2} \tau^*_2( u_A, \gamma ) \xrightarrow{A \to \infty} 0\,.
  \end{equation*}
\end{proposition}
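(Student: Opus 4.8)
The plan is to reduce both assertions to the small-diffusivity behaviour of the dissipation time of the \emph{fixed} flow~$v$, and then to adapt the mixing-to-dissipation estimates cited above from the Laplacian case $\alpha = 1$ to the bi-Laplacian case $\alpha = 2$. Writing $u_A(x,t) = A\, v(x, At)$ and rescaling time by $\tau = A t$ turns equation~\eqref{e:ad} with velocity~$u_A$ and diffusivity~$\gamma$ into the same equation with velocity~$v$ and diffusivity~$\gamma/A$. Since this rescaling is an isometry on $\dot L^2$ and maps the set of initial times $\set{s \geq 0}$ onto itself, it yields the identity
\[
  \tau^*_2(u_A, \gamma) = \frac{1}{A}\, \tau^*_2\paren[\Big]{v, \frac{\gamma}{A}}\,.
\]
As $\norm{\grad u_A}_{L^\infty} = A \norm{\grad v}_{L^\infty}$, writing $\kappa = \gamma/A$ reduces the first claim to $\kappa\, \tau^*_2(v, \kappa) \to 0$ and the second to $\kappa^{1/2}\, \tau^*_2(v, \kappa) \to 0$ as $\kappa \to 0$ (the prefactor $(1 + \norm{\grad u_A}_{L^\infty})^{1/2}$ is of order $A^{1/2} = (\gamma/\kappa)^{1/2}$).

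The trivial bound~\eqref{e:dtime1} gives only $\tau^*_2(v,\kappa) \leq O(1/\kappa)$, that is $\kappa\,\tau^*_2 = O(1)$, so both parts require strictly beating it. For the first claim I would adapt the relaxation-enhancement argument of~\cite{ConstantinKiselevEA08} to $\alpha = 2$: it is spectral (the RAGE theorem applied to the skew-adjoint generator $v \cdot \grad$ and the positive self-adjoint operator $(-\lap)^2$), and since weak mixing precludes nonconstant $H^1$ eigenfunctions of the flow it yields $\kappa\, \tau^*_2(v,\kappa) \to 0$. Replacing $-\lap$ by $(-\lap)^2$ leaves the compactness and spectral input unchanged, so this adapts directly and proves part one.

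For the quantitative second claim I would adapt the estimates of~\cites{CotiZelatiDelgadinoEA18,FengIyer19} to $\alpha=2$. The only ingredients that change are the energy identity $\tfrac{d}{dt}\norm{\theta}_{L^2}^2 = -2\kappa \norm{\theta}_{\dot H^2}^2$, the interpolation $\norm{\theta}_{L^2} \leq \norm{\theta}_{\dot H^{-1}}^{2/3} \norm{\theta}_{\dot H^2}^{1/3}$ (in place of $\norm{\theta}_{L^2} \leq \norm{\theta}_{\dot H^{-1}}^{1/2} \norm{\theta}_{\dot H^1}^{1/2}$), and the gradient-growth bound $\norm{\theta(t)}_{\dot H^1} \leq e^{\norm{\grad v}_{L^\infty} t} \norm{\theta(0)}_{\dot H^1}$. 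Running the usual argument — a short hyperdiffusive smoothing step to enter $\dot H^1$, a transport window on which strong mixing drives $\norm{\theta}_{\dot H^{-1}}$ down by the factor~$h$, and the interpolation and energy inequalities to convert a small $\dot H^{-1}$ norm into an $L^2$ decrease — should produce a bound of the form $\tau^*_2(v,\kappa) \leq C\, h^{-1}(c\, \kappa^{1/2})$. The point is then an elementary identity: with $t = h^{-1}(c\,\kappa^{1/2})$ one has $h(t) = c\,\kappa^{1/2}$, hence $\kappa^{1/2}\, \tau^*_2(v,\kappa) \leq (C/c)\, t\, h(t)$ with $t \to \infty$ as $\kappa \to 0$, so the hypothesis $t\,h(t) \to 0$ is exactly what gives $\kappa^{1/2}\, \tau^*_2(v,\kappa) \to 0$, and therefore the second claim.

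The main obstacle is the derivative loss introduced by the bi-Laplacian. In the comparison with pure transport the forcing $\kappa (-\lap)^2 \theta$ enters the $\dot H^{-1}$ estimate as $\kappa \norm{\theta}_{\dot H^3}$, one order worse than the $\kappa \norm{\theta}_{\dot H^1}$ of the $\alpha = 1$ theory. Closing the estimate therefore requires using the hyperdiffusive smoothing to upgrade the $L^2$ data, balancing the window length against the a priori exponential gradient growth $e^{\norm{\grad v}_{L^\infty} t}$ so that a definite per-window decrease survives with a controlled constant, and tracking these losses precisely enough to preserve the $\kappa^{1/2}$ threshold that makes the condition $t\,h(t) \to 0$ sharp. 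This bookkeeping, rather than any genuinely new idea, is where the difficulty lies.
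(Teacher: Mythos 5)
Your rescaling step is exactly the paper's: the identity $\tau^*_2(u_A,\gamma)=\tfrac{1}{A}\tau^*_2(v,\gamma/A)$ is equation~\eqref{e:tau2A} there, and the reduction of both claims to the small-diffusivity behaviour of the fixed flow $v$ is the paper's first move. Your treatment of the strong-mixing claim is also the paper's route in spirit (a quantitative mixing-to-dissipation bound for the bi-Laplacian; the paper does this bookkeeping in Theorem~\ref{t:strong} and Appendix~\ref{s:dtimebound}). The genuine gap is in your first claim. You propose to handle weak mixing by the RAGE/spectral argument of~\cite{ConstantinKiselevEA08}, applied to ``the skew-adjoint generator $v\cdot\grad$.'' But the proposition allows $v\in L^\infty([0,\infty);C^2(\T^d))$ to be \emph{time-dependent}, and ``weakly mixing'' here is the quantitative Definition~\ref{d:mixing}, not the spectral property of an autonomous flow. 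For a time-dependent $v$ there is no fixed skew-adjoint generator and no unitary group to which the RAGE theorem applies, and the dichotomy ``no nonconstant $H^1$ eigenfunctions'' is not even meaningful; moreover the dissipation time requires decay uniformly over all starting times $s\geq 0$, which a spectral argument does not address. This is not a removable restriction: as the paper notes, smooth \emph{autonomous} (or even time-periodic) mixing flows on $\T^d$ are not known to exist, so your argument would prove the statement only in a possibly empty setting. The fix is to treat the weak-mixing case by the same quantitative machinery you reserve for the strong case — this is exactly Theorem~\ref{t:strong}(1), which bounds $\tau^*_2$ via \eqref{e:tau2weak}--\eqref{e:tau0weak}; the first claim then follows because $t_*(A)\to\infty$ and $h$ vanishes at infinity.

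There is also a quantitative overstatement in your second claim. You assert an intermediate bound of the form $\tau^*_2(v,\kappa)\leq C\,h^{-1}(c\,\kappa^{1/2})$ and then conclude by the identity $\kappa^{1/2}h^{-1}(c\kappa^{1/2})=(1/c)\,t\,h(t)$. But the argument you sketch (and the paper's) does not deliver a bound linear in $h^{-1}$: it delivers
\begin{equation*}
  \tau^*_2(v,\kappa)\;\leq\; t_* + C_1\norm{v}_{C^2}\,t_*^2\,,
  \qquad\text{where } \kappa\,\norm{v}_{C^2}\,t_*^2 = C_2\,h^2\!\left(\tfrac{t_*}{2\sqrt{2}}\right),
\end{equation*}
which for exponential $h$ is of size $\abs{\ln\kappa}^2$, not $\abs{\ln\kappa}$ — consistent with the known $\tau^*_1=O(\abs{\ln\gamma}^2)$ results cited in the introduction. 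The distinction matters: if you replace the implicit definition of $t_*$ by the crude bound $t_*\lesssim h^{-1}(c\kappa^{1/2})$ and square it, you get $\kappa^{1/2}\tau^*_2\lesssim s^2h(s)$ with $s=h^{-1}(c\kappa^{1/2})$, and $s^2h(s)\to 0$ is strictly stronger than the hypothesis~\eqref{e:thtVanish} (take $h(t)=1/(t\ln t)$: then $t\,h(t)\to 0$ but $t^2h(t)\to\infty$). The correct deduction exploits the implicit relation itself: taking square roots and multiplying by $t_*$ gives
\begin{equation*}
  \kappa^{1/2}\norm{v}_{C^2}^{1/2}\,t_*^2 = C_2^{1/2}\, t_*\, h\!\left(\tfrac{t_*}{2\sqrt{2}}\right)\,,
\end{equation*}
which vanishes as $\kappa\to 0$ precisely under~\eqref{e:thtVanish}, since $t_*\to\infty$. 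So your ``elementary identity'' must be run on the implicitly defined $t_*$, not on $h^{-1}(c\kappa^{1/2})$; as written, your step either assumes an unproven (and, by the cited results, probably unavailable) linear bound, or silently requires a stronger hypothesis than the proposition states.
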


For ease of presentation, we defer the definition of weak and strong mixing used above to Section~\ref{s:dtimeMix} (see Definition~\ref{d:mixing}, below).
To the best of our knowledge the existence of smooth, \emph{time-independent} or even \emph{time-periodic},  mixing flows on the torus is open.
Various interesting and explicit examples of mixing flows were constructed in \cites{YaoZlatos17,AlbertiCrippaEA19,ElgindiZlatos19}.
Unfortunately none of these examples are spatially regular enough to be used in Proposition~\ref{p:dtimeMix}.

Fortunately, there are many known examples of (spatially) smooth, \emph{time-dependent}, flows on the torus that are exponentially mixing, and any such flow \emph{will} satisfy the conditions required by Proposition~\ref{p:dtimeMix}.
The simplest example we are aware of is to use alternating horizontal/vertical sinusoidal shear flows with randomized phases.
These were introduced by Pierrehumbert~\cite{Pierrehumbert94} and used to produce our Figure~\ref{f:chmixed}.
One can show that these flows, and a variety of other examples, are exponentially mixing using techniques in~\cite{BedrossianBlumenthalEA19}.

We also remark that the mixing requirement in Proposition~\ref{p:dtimeMix} is morally much stronger than what is needed in order to apply Theorem~\ref{t:chmix}.
Indeed, for Theorem~\ref{t:chmix} one only needs flows whose dissipation time $\tau^*_2$ is sufficiently small.
Proposition~\ref{p:dtimeMix} ensures smallness of~$\tau^*_2(u_A, \gamma)$ by using the property that the flow $v$ sends a fraction of the total energy to high frequencies, which then gets rapidly damped by the diffusion.
The mixing assumptions on~$v$, however, ensure a much stronger property, namely that the flow~$v$ eventually sends all the energy to high frequencies (see~\cite{DrivasElgindiEA19} for a longer discussion).
Thus the mixing hypothesis in Proposition~\ref{p:dtimeMix} is most likely much stronger than what may be needed to apply Theorem~\ref{t:chmix}.
In theory, it should also be easier to find flows directly satisfying the requirements of Theorem~\ref{t:chmix}, without using Proposition~\ref{p:dtimeMix}.

When the diffusion operator is the standard Laplacian) this was done in~\cite{IyerXuEA19}.
Here, the authors showed that for any $\tau_0> 0$, there exists a sufficiently strong and fine cellular flow, $u$, for which~$\tau^*_1(u, \gamma) < \tau_0$.
This provides a simple, explicit, smooth, \emph{time independent} family of velocity fields with arbitrarily small dissipation time (when the diffusion operator is the standard Laplacian), and in~\cite{IyerXuEA19} the authors used it to prevent blow up in the Keller--Segel and other second-order, non-linear, parabolic PDEs.

We expect that for any $\tau_0 > 0$, one can also construct sufficiently strong and fine cellular flows for which~$\tau^*_2(u, \gamma) < \tau_0$ (we recall here $\tau^*_2(u, \gamma)$ is the dissipation time when the diffusion operator is the bi-Laplacian).
Unfortunately the proof in~\cite{IyerXuEA19} does not generalize, and thus we are presently unable to produce cellular flows for which~$\tau^*_2(u, \gamma)$ is small enough, or for which~\eqref{e:3dSmallness} holds.

\subsection{Relationships between the various dissipation times}\label{s:introDtimeRel}

Since for any~$\alpha, \gamma > 0$, the quantity~$\tau^*_\alpha(u, \gamma)$ is a measure of the rate at which~$u$ mixes, it is natural to study its behavior as $\alpha$ and $\gamma$ vary.
When~$\alpha = 1$, the behavior of~$\tau^*_\alpha(u, \gamma)$ as $\gamma \to 0$ was recently studied in~\cites{CotiZelatiDelgadinoEA18,FengIyer19,Feng19} and quantified in terms of the mixing rate.
We will instead study the behavior of~$\tau^*_\alpha( u, \gamma)$ when~$\gamma$ is fixed and~$\alpha$ varies.
Moreover, since~$\tau^*_1(u, \gamma)$ and $\tau^*_2(u, \gamma)$ are particularly interesting from a physical point of view, we focus our attention on the relationship between these two quantities.
Our first result is an upper bound for $\tau^*_2(u, \gamma)$ in terms of $\tau^*_1(u, \gamma)$.

\begin{lemma}\label{l:tau1tau2}
  There exists an explicit dimensional constant $C$ such that for every divergence-free~$u \in L^\infty( [0, \infty); C^2(\T^d) )$, and every~$\gamma > 0$, we have
  \begin{equation}\label{e:tau2tau1}
    \tau^*_2(u, \gamma)
      \leq C \tau^*_1(u, \gamma) (1 + \norm{u}_{C^2}\, \tau^*_1(u, \gamma) )\,.
  \end{equation}
\end{lemma}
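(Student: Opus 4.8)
The plan is to compare the advection--hyperdiffusion propagator $\mathcal S^{u,2}$ with the advection--diffusion propagator $\mathcal S^{u,1}$, whose contraction factor over time $\tau^*_1(u,\gamma)$ is known by definition~\eqref{e:dtimeDef}. Since the $L^2$ norm decreases along both evolutions, it suffices to exhibit a single horizon $T \le C\tau^*_1(u,\gamma)\paren{1 + \norm{u}_{C^2}\tau^*_1(u,\gamma)}$ for which $\norm{\mathcal S^{u,2}_{s,s+T}}_{\dot L^2 \to \dot L^2} \le \tfrac12$ uniformly in $s$; by time translation I fix $s = 0$ and $f \in \dot L^2$. The two generators differ only through the operator $\mathcal R \defeq \gamma\paren{(-\lap)^2 - (-\lap)}$, which is \emph{nonnegative} on $\dot L^2$ because the spectral gap $(2\pi)^2$ exceeds $1$. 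The governing algebraic fact is the pointwise operator inequality $(-\lap)^2 \ge 2\Lambda(-\lap) - \Lambda^2$, valid on $\dot L^2$ for every $\Lambda>0$ (it is just $(\mu-\Lambda)^2\ge 0$ on the spectrum), which trades the fourth--order dissipation for a second--order one -- matching the diffusion near frequency $2\Lambda$ -- at the cost of a zeroth--order reaction $\gamma\Lambda^2$.

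Writing $\phi = \mathcal S^{u,1}f$, $\theta = \mathcal S^{u,2}f$, and $w = \theta - \phi$, the difference solves the advection--hyperdiffusion equation forced by $-\mathcal R\phi$ with $w(0)=0$. Testing against $w$, using that $u$ is divergence free to kill the transport term, and moving one Laplacian by parts gives $\tfrac{d}{dt}\norm{w}_{L^2}^2 \le \gamma\norm{\lap\phi}_{L^2}^2$, whence $\norm{w(T)}_{L^2}^2 \le \gamma\int_0^T \norm{\lap\phi}_{L^2}^2\,dt$. The integral on the right is exactly where the regularity of the flow enters: testing~\eqref{e:ad} with $\alpha=1$ against $-\lap\phi$ and integrating the advective term by parts (the incompressible part cancels) bounds it by $\norm{\grad u}_{L^\infty}\norm{\grad\phi}_{L^2}^2$, so that
\[
\gamma\int_0^T\norm{\lap\phi}_{L^2}^2 \le \tfrac12\norm{\grad\phi(0)}_{L^2}^2 + \norm{\grad u}_{L^\infty}\int_0^T\norm{\grad\phi}_{L^2}^2,
\]
while the basic energy identity controls $\int_0^T\norm{\grad\phi}_{L^2}^2$. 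Tracking these two nested time integrations against the $\tau^*_1$--decay of $\phi$ is what is expected to produce the leading term $O(\tau^*_1)$ together with the quadratic correction $O\paren{\norm{u}_{C^2}(\tau^*_1)^2}$, the factor $\norm{u}_{C^2}$ being precisely the price of commuting the stirring $u\cdot\grad$ past the diffusion.

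The main obstacle is this interaction between advection and the fourth--order dissipation, which lives on high frequencies, and it forces two technical points. First, $\theta$ and $\phi$ disagree at order one on high modes \emph{instantaneously} -- hyperdiffusion annihilates frequencies that diffusion still carries -- so for rough data $f\in\dot L^2$ the term $\norm{\grad\phi(0)}=\norm{\grad f}$ above need not be finite and the naive comparison is vacuous. This is handled by first evolving under hyperdiffusion on a short layer $[0,\delta]$, where a time--weighted energy estimate yields an $H^1$ bound of the form $\norm{\grad\theta(\delta)}_{L^2}^2 \le C(\gamma\delta)^{-1/2}\norm{f}_{L^2}^2$, and only then comparing on $[\delta,T]$ against the diffusion solution restarted from $\theta(\delta)$. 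Second, and more essentially, one cannot compare at the \emph{same} diffusivity when $\norm{u}_{C^2}\tau^*_1$ is large, since then the reaction $\gamma\Lambda^2$ and the flow--driven gradient growth $\tfrac{d}{dt}\norm{\grad\phi}_{L^2}^2 \le 2\norm{\grad u}_{L^\infty}\norm{\grad\phi}_{L^2}^2$ overwhelm a one--window estimate; the remedy is to choose the comparison diffusivity $2\gamma\Lambda$ and the horizon $T$ so that the super-exponential high--frequency damping of the hyperdiffusion absorbs the comparison error while the diffusion passes through $\sim(1+\norm{u}_{C^2}\tau^*_1)$ halving cycles. I expect the delicate balancing of $\Lambda$, $\delta$, and the number of cycles against the reaction and gradient--growth terms -- that is, transferring a solution--operator property (the dissipation time) of the second--order problem to the fourth--order one despite the non-commutation of $u\cdot\grad$ with $(-\lap)^2$ -- to be the technical heart of the argument.
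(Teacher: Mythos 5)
There is a genuine gap, and it sits exactly at the point you defer as ``the technical heart.'' Your one-window comparison cannot close: your own estimates give $\norm{w(T)}_{L^2}^2 \leq C\gamma\int_0^T\norm{\lap\phi}_{L^2}^2\,dt \leq C\bigl(\norm{\grad\phi(0)}_{L^2}^2 + \gamma^{-1}\norm{\grad u}_{L^\infty}\norm{f}_{L^2}^2\bigr)$, whereas the argument needs $\norm{w(T)}_{L^2}\leq \tfrac{1}{4}\norm{f}_{L^2}$. The factor $\gamma^{-1}\norm{\grad u}_{L^\infty}$ is not an artifact of loose estimates: a flow with small $\tau^*_1$ is precisely one that pumps the energy of $\phi$ to high frequencies, so $\gamma\int\norm{\lap\phi}_{L^2}^2$ genuinely can be of size $\gamma^{-1}\norm{\grad u}_{L^\infty}\norm{f}_{L^2}^2 \gg \norm{f}_{L^2}^2$, and restarting the comparison window by window reproduces the same oversized error each time. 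Your proposed remedy fails for two independent reasons. First, the lemma's hypothesis furnishes only $\tau^*_1(u,\gamma)$; a comparison at the shifted diffusivity $2\gamma\Lambda$ would require knowledge of $\tau^*_1(u,2\gamma\Lambda)$, and no monotonicity or other relation between dissipation times at different diffusivities is assumed or known (this dependence is itself the delicate object studied in the enhanced-dissipation literature). Second, the inequality $(-\lap)^2\geq 2\Lambda(-\lap)-\Lambda^2$ is a quadratic-form statement: it yields a differential inequality for $\norm{\theta}_{L^2}^2$, but the dissipation time of Definition~\ref{d:dissipationTime} quantifies contraction of the full solution operator $\mathcal S^{u,1}$, a property invisible at the level of energy identities (which do not see $u$ at all). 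To invoke $\tau^*_1$ one must go through a Duhamel formula, and Duhamel does not see the favorable sign of $\mathcal{R}$ --- it needs smallness of the forcing in norm, which your scheme does not provide.

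The idea you are missing is the dichotomy the paper runs (Lemmas~\ref{l:comlarge} and~\ref{l:comsmall}) on windows of length $2\tau^*_1$, with threshold $\lambda \defeq 1/\bigl(4\gamma\tau^*_1(20C_1\norm{u}_{C^2}\tau^*_1+11)\bigr)$. If the window average of $\norm{\lap\theta}_{L^2}^2$ exceeds $\lambda\norm{\theta(t_0)}_{L^2}^2$, the energy identity alone gives decay by $e^{-2\lambda\gamma\tau^*_1}$, with no reference to the advection. If instead it is below $\lambda$, the paper writes the hyperdiffusion equation as the advection--diffusion equation forced by $-\gamma(\lap^2\theta+\lap\theta)$ and applies Duhamel with $\mathcal S^{u,1}$ starting from a time $t_1\in[0,\tau^*_1]$ chosen by Chebyshev so that $\norm{\lap\theta(t_1)}_{L^2}^2\leq 2\lambda\norm{\theta_0}_{L^2}^2$ (this also disposes of your rough-data layer). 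The crucial difference from your scheme is that the forcing is measured along $\theta$ itself, and its smallness comes from the \emph{case hypothesis} --- whose failure independently yields decay --- rather than from parabolic regularity of the comparison solution: an $H^2$ energy estimate gives $2\gamma\int_{t_1}^{2\tau^*_1}\norm{\lap^2\theta}_{L^2}^2\,ds \leq \lambda(4C_1\norm{u}_{C^2}\tau^*_1+2)\norm{\theta_0}_{L^2}^2$, and Cauchy--Schwarz in time makes the Duhamel error at most a fixed fraction of $\norm{\theta_0}_{L^2}$ precisely because $\lambda$ scales like $1/\bigl(\gamma\tau^*_1(1+\norm{u}_{C^2}\tau^*_1)\bigr)$. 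Iterating the two cases gives contraction by a fixed factor every $2\tau^*_1$, hence $\tau^*_2\leq (\ln 2)/(\lambda\gamma)$, which is the claimed bound~\eqref{e:tau2tau1}.
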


Since velocity fields with small~$\tau^*_1(u, \gamma)$ are known, one use of~Lemma~\ref{l:tau1tau2} is to produce velocity fields for which~$\tau^*_2(u, \gamma)$ and $(1 + \norm{\grad u}_{L^\infty})^{1/2} \tau^*_2(u, \gamma)$ are small.
For instance, if~$u$ is mixing at a sufficiently fast rate, then results of~\cites{Wei18,CotiZelatiDelgadinoEA18,FengIyer19,Feng19} along with Lemma~\ref{l:tau1tau2} can be used to produce velocity fields for which~$\tau^*_2(u, \gamma)$ and $(1 + \norm{\grad u}_{L^\infty})^{1/2} \tau^*_2(u, \gamma)$ are arbitrarily small. Lemma~\ref{l:tau1tau2}, however, cannot be used to produce cellular flows for which $\tau^*_2(u, \gamma)$ is arbitrarily small.
Indeed, with the~$\tau^*_1$ bound in~\cite{IyerXuEA19}, or even the best expected heuristic for cellular flows, the right-hand side of~\eqref{e:tau2tau1} diverges.

\subsection{Plan of the paper}\label{s:introPlan}

In Section~\ref{s:decay} we prove our main result (Theorem~\ref{t:chmix}).
In Section~\ref{s:dtimeMix} we recall the definition of weak and strong mixing and prove Proposition~\ref{p:dtimeMix}.
In Section~\ref{s:relationbetweentau} we prove Lemma~\ref{l:tau1tau2} bounding~$\tau^*_2$ in terms of~$\tau^*_1$.
Finally, for completeness, we conclude with an appendix estimating the dissipation time~$\tau^*_2$ in terms of the mixing rate of the advecting velocity field.

\section{Decay of the advective Cahn--Hilliard equation}\label{s:decay}

This section is devoted to the proof of Theorem~\ref{t:chmix}.
We begin by recalling the well-known existence of global strong solutions to equation~\eqref{e:ach}.
Elliott and Songmu~\cite{ElliottSongmu86} proved well-posedness in the absence of advection.  Since the advection is a first-order linear term, their proof can easily be adapted to our setting.  We state the result here for convenience.

\begin{proposition}\label{p:existence}
  Let  $\gamma > 0$, $u \in L^\infty([0, \infty); W^{1, \infty}(\T^d))$ be divergence-free and $c_0 \in H^2(\T^d)$.
  There exists a unique strong solution to~\eqref{e:ach} in the space
  \begin{equation*}
    c(t,x)\in L^2_{\loc}([0,\infty);H^{4}(\T^{d}))
    \cap L^{\infty}_\loc([0,\infty);H^{2}(\T^{d}))
    \cap H^{1}_\loc([0,\infty); L^{2}(\T^{d}) )\,.
  \end{equation*}

\end{proposition}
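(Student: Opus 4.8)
The plan is to adapt the Galerkin approximation argument of Elliott and Songmu~\cite{ElliottSongmu86}, the only genuinely new feature being the first-order advection term. I would first rewrite~\eqref{e:ach} in gradient-flow form $\partial_t c + u\cdot\grad c = \lap\mu$ with chemical potential $\mu \defeq -\gamma\lap c + c^3 - c$, project onto the span of the first $n$ Fourier modes to obtain approximations $c_n$, and solve the resulting finite-dimensional ODE system; local solvability is immediate from the Picard theorem, and global solvability in $n$ follows once the $n$-uniform a priori bounds below are in place.

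The heart of the matter is a pair of such bounds. Testing the projected equation against $\mu_n$ produces an evolution for the free energy $\FE$. Since $u$ is divergence-free, the chemical part of the advection contribution integrates to zero, $\int_{\T^d}(c_n^3 - c_n)\,u\cdot\grad c_n\,dx = 0$, while the interfacial part satisfies $\int_{\T^d}\lap c_n\,u\cdot\grad c_n\,dx = -\int_{\T^d}(\grad c_n)^{T}(\grad u)(\grad c_n)\,dx$, which is bounded by $\norm{\grad u}_{L^\infty}\norm{\grad c_n}_{L^2}^2$. This yields
\[
  \frac{d}{dt}\FE + \norm{\grad\mu_n}_{L^2}^2 \le C\norm{\grad u}_{L^\infty}\FE,
\]
so that Gr\"onwall gives uniform control of $c_n$ in $L^\infty_\loc H^1$ (via the free energy) and of $\grad\mu_n$ in $L^2_\loc L^2$. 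For the higher estimate I would test against $\lap^2 c_n$, obtaining
\[
  \tfrac12\frac{d}{dt}\norm{\lap c_n}_{L^2}^2 + \gamma\norm{\lap^2 c_n}_{L^2}^2
  = \int_{\T^d}\lap(c_n^3 - c_n)\,\lap^2 c_n\,dx - \int_{\T^d}(u\cdot\grad c_n)\,\lap^2 c_n\,dx .
\]
The advection and linear terms are handled by Young's inequality, absorbing a fraction of $\gamma\norm{\lap^2 c_n}_{L^2}^2$ into the left-hand side. Expanding $\lap(c_n^3) = 3c_n^2\lap c_n + 6c_n\abs{\grad c_n}^2$ and invoking the embeddings $H^2(\T^d)\hookrightarrow L^\infty(\T^d)$ and $H^2(\T^d)\hookrightarrow W^{1,4}(\T^d)$, valid precisely because $d\le 3$, together with Gagliardo--Nirenberg interpolation against the dissipation $\norm{\lap^2 c_n}_{L^2}$, bounds the nonlinear term by $\tfrac{\gamma}{2}\norm{\lap^2 c_n}_{L^2}^2$ plus a locally integrable multiple of $1 + \norm{\lap c_n}_{L^2}^2$. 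A second Gr\"onwall then gives uniform bounds on $c_n$ in $L^\infty_\loc H^2 \cap L^2_\loc H^4$; reading $\partial_t c_n$ off the equation gives a bound in $H^1_\loc L^2$. As these bounds are global in time (the superlinear terms being tamed by interpolation, so no finite-time blow-up occurs), existence holds for all time.

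With these estimates in hand, passage to the limit is standard: the Aubin--Lions lemma gives strong convergence $c_n\to c$ in $L^2_\loc H^2$, hence in $L^\infty_{t,x}$ on compact time sets since $d\le 3$, which suffices to pass to the limit in the cubic term $c_n^3$, while the remaining terms pass by weak convergence. For uniqueness I would test the difference $w \defeq c_1 - c_2$ of two solutions against $(-\lap)^{-1}w$ (legitimate since conservation of the mean forces $w$ to be mean-zero), controlling $\lap(c_1^3 - c_2^3)$ through the $L^\infty_{t,x}$ bound on the solutions and the advection term through $u\in L^\infty([0,\infty);W^{1,\infty})$, to reach $\tfrac{d}{dt}\norm{w}_{\dot H^{-1}}^2 \le C\norm{w}_{\dot H^{-1}}^2$ and conclude $w\equiv 0$ by Gr\"onwall.

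I expect the main obstacle to be the top-order a priori estimate: closing the bound on $\norm{\lap c_n}_{L^2}$ requires estimating the cubic nonlinearity $\lap(c_n^3)$ in $L^2$ delicately enough that the highest-order norm $\norm{\lap^2 c_n}_{L^2}$ is absorbed into the dissipation rather than appearing with a superlinear power. This is exactly where the dimensional restriction $d\le 3$ enters, and where the interpolation must be carried out with care.
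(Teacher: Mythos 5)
Your proposal is correct and takes essentially the same route as the paper: the paper offers no proof of Proposition~\ref{p:existence}, instead citing Elliott and Songmu~\cite{ElliottSongmu86} and noting that their argument adapts easily because the advection is a first-order linear term with $\div u = 0$. Your sketch (Galerkin approximation, the free-energy estimate using incompressibility to kill the chemical part of the advection, the $H^2$ estimate absorbing the cubic term into the dissipation $\gamma\norm{\lap^2 c_n}_{L^2}^2$, Aubin--Lions compactness, and $\dot H^{-1}$ uniqueness) is precisely that adaptation, and its key cancellations reappear verbatim in the paper's Lemma~\ref{l:H1bound}.
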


For the remainder of this section let $\beta > 1$, $\gamma > 0$, and $\mu > 0$ be as in the statement of Theorem~\ref{t:chmix}.
Without loss of generality we may further assume $\beta \in (1, 2]$.  We also fix a divergence-free velocity field $u \in L^\infty([0, \infty); W^{1, \infty}(\T^d) )$, $c_0 \in H^2(\T^d)$ and let $c$ be the unique strong solution to equation~\eqref{e:ach} with initial data~$c_0$.
The existence of such a solution is guaranteed by Proposition~\ref{p:existence}.

The main idea behind the proof of Theorem~\ref{t:chmix} is to split the analysis into two cases.  First, when the time average of $\norm{\Delta c}_{L^2}$ is large, standard energy estimates will show that the variance of~$c$ decreases exponentially.
Second, when the time average of~$\norm{\Delta c}_{L^2}^2$ is small, we will use the advection term to show that the variance of~$c$ still decreases exponentially, at a comparable rate.

We begin with a lemma handling the first case.

\begin{lemma}\label{l:h1meanlargev2}
For any $t_0 \geq 0$ and $\beta>1$, we have
\begin{equation}\label{e:doubling}
\sup_{0\leqslant\tau\leqslant \gamma \ln \beta}\norm{c({t_{0}+\tau})-\bar{c}}_{L^{2}}^2
\leqslant \beta \norm{c({t_{0}})-\bar{c}}_{L^{2}}^2\,.
\end{equation}
Moreover, if for some $\tau\in (0, \gamma \ln \beta)$ and $\mu>0$ we have
\begin{equation}\label{e:h1meanlarge}
\frac{1}{\tau}\int_{t_0}^{t_0+\tau}\norm{\lap c}_{L^2}^2\,ds \geq \frac{\beta+2\gamma\mu}{\gamma^2}\norm{c\paren{t_0}-\bar c}_{L^2}^2\,,
\end{equation}
then
\begin{equation}\label{e:L2expDecay}
\norm{c(t_0+\tau)-\bar c}_{L^2}\leq e^{-\mu \tau}\norm{c(t_0)-\bar c}_{L^2}\,.
\end{equation}
\end{lemma}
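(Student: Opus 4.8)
The plan is to reduce both claims to a single differential inequality for the variance $V(t) \defeq \norm{c(t) - \bar c}_{L^2}^2$. First I would note that the spatial mean of $c$ is conserved: integrating \eqref{e:ach} over $\T^d$, the advection term vanishes by incompressibility and the remaining terms are integrals of Laplacians, so $\int_{\T^d} c(t)\,dx = \bar c$ for every $t$. Thus $w \defeq c - \bar c$ is mean-zero and satisfies $\grad w = \grad c$ and $\lap w = \lap c$. All manipulations below are justified by the strong solution of Proposition~\ref{p:existence}, for which $\partial_t c \in L^2_\loc(L^2)$, $c \in L^\infty_\loc(H^2) \cap L^2_\loc(H^4)$, and $c \in L^\infty$ in dimension $d \le 3$.

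Next I would test \eqref{e:ach} against $w$ and integrate by parts term by term. The advection term gives $\int u \cdot \grad w\, w\,dx = \tfrac12 \int u \cdot \grad(w^2)\,dx = 0$; the hyperdiffusion term gives $\gamma \int \lap^2 c\, w\,dx = \gamma \norm{\lap c}_{L^2}^2$; and the nonlinear term gives $\int \lap(c^3 - c)\, w\,dx = -3 \int_{\T^d} c^2 \abs{\grad c}^2\,dx + \norm{\grad c}_{L^2}^2$. This produces the energy identity
\[
  \tfrac12 \frac{d}{dt} \norm{w}_{L^2}^2 + \gamma \norm{\lap c}_{L^2}^2
  = - 3 \int_{\T^d} c^2 \abs{\grad c}^2 \, dx + \norm{\grad c}_{L^2}^2 \,.
\]

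The key step — and the only real obstacle — is controlling the destabilizing term $\norm{\grad c}_{L^2}^2$, which is the single source of growth. I would discard the favorable term $-3\int c^2 \abs{\grad c}^2 \le 0$, interpolate $\norm{\grad c}_{L^2}^2 = \norm{\grad w}_{L^2}^2 \le \norm{w}_{L^2} \norm{\lap c}_{L^2}$, and apply Young's inequality with a weight chosen to absorb exactly half of $\gamma \norm{\lap c}_{L^2}^2$ into the left-hand side. The surviving constants must be tracked precisely, since they dictate the time window $\gamma \ln \beta$ and the threshold appearing in \eqref{e:h1meanlarge}. The outcome is the master inequality
\[
  \frac{d}{dt} V + \gamma \norm{\lap c}_{L^2}^2 \le \frac1\gamma \, V \,,
  \qquad V = \norm{c - \bar c}_{L^2}^2 \,.
\]

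From here both conclusions are immediate. For \eqref{e:doubling} I would drop the nonnegative term $\gamma \norm{\lap c}_{L^2}^2$ to obtain $V' \le V/\gamma$, and Gr\"onwall over $[t_0, t_0 + \tau]$ with $\tau \le \gamma \ln \beta$ yields $V(t_0 + \tau) \le e^{\tau/\gamma} V(t_0) \le \beta V(t_0)$. For \eqref{e:L2expDecay} I would instead integrate the master inequality over $[t_0, t_0 + \tau]$, bound $\int_{t_0}^{t_0+\tau} V\,ds \le \beta \tau V(t_0)$ using the doubling estimate just established, and insert the lower bound on $\int_{t_0}^{t_0+\tau} \norm{\lap c}_{L^2}^2\,ds$ supplied by the hypothesis \eqref{e:h1meanlarge}. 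By the choice of threshold $(\beta + 2\gamma\mu)/\gamma^2$ the two $\beta$-contributions cancel exactly, leaving $V(t_0 + \tau) \le (1 - 2\mu\tau) V(t_0) \le e^{-2\mu\tau} V(t_0)$; taking square roots gives \eqref{e:L2expDecay}.
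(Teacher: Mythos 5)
Your proof is correct and follows essentially the same route as the paper's: testing against $c-\bar c$, discarding the sign-definite cubic term, and absorbing the destabilizing term via Cauchy--Schwarz/Young to reach exactly the paper's differential inequality $\partial_t\norm{c-\bar c}_{L^2}^2 + \gamma\norm{\lap c}_{L^2}^2 \le \tfrac1\gamma\norm{c-\bar c}_{L^2}^2$, from which both conclusions follow by Gr\"onwall and time integration just as in the paper. The only cosmetic difference is that you integrate the $-\lap c$ term by parts to $\norm{\grad c}_{L^2}^2$ before interpolating, whereas the paper applies Cauchy--Schwarz to $\ip{\lap c, c-\bar c}$ directly; the resulting estimate is identical.
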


For clarity of presentation, we momentarily postpone the proof of Lemma~\ref{l:h1meanlargev2}.
We will now treat the two- and three-dimensional cases separately.

\subsection{The two-dimensional case}\label{s:2D}

Suppose the time average of~$\norm{\lap c}_{L^2}^2$ is small.
In this case, we will show that if $\tau^*_2(u, \gamma)$ is small enough, then the variance of~$c$ still decreases by a constant fraction after time~$\tau^*_2(u, \gamma)$.

\begin{lemma}\label{l:h1meansmallv2}
For any $t_0\geqslant 0$, there exists a time
\begin{equation*}
  T_0' = T_0'( \norm{c(t_0) - \bar c}_{L^2}, \bar c, \beta,\gamma, \mu)\in (0, \gamma \ln \beta]
\end{equation*}
such that if
\begin{subequations}
\begin{gather}
  \label{e:t2small}
  \tau_2^{*}(u,\gamma)\leqslant T_0'( \norm{c(t_0) - \bar c}_{L^2},\beta, \gamma,\mu,\bar c)\,,
  \\
  \label{e:h2small}
  \frac{1}{\tau_2^*(u,\gamma)}\int_{t_0}^{t_0 + \tau^*_2(u, \gamma)} \norm{\lap c}_{L^2}^2 \, ds
    \leq \frac{\beta + 2 \gamma \mu }{\gamma^2} \norm{c(t_0) - \bar c}_{L^2}^2\,,
\end{gather}
\end{subequations}
then~\eqref{e:L2expDecay} still holds at time~$\tau = \tau^*_2(u, \gamma)$.
Moreover, the time $T_0'$ can be chosen to be decreasing as a function of~$\norm{ c(t_0) - \bar c }_{L^2}$.
\end{lemma}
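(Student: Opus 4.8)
The plan is to prove Lemma~\ref{l:h1meansmallv2} by an energy estimate on the advective Cahn--Hilliard equation~\eqref{e:ach}, using the dissipation time of the advection-hyperdiffusion equation~\eqref{e:ad} to absorb the linear part, and treating the nonlinear term $\lap(c^3 - c)$ perturbatively. The key structural observation is that $v \defeq c - \bar c$ solves an equation of the form $\partial_t v + u \cdot \grad v + \gamma \lap^2 v = \lap(c^3 - c)$, whose homogeneous (forced-free) flow is exactly the solution operator $\mathcal S^{u,2}_{s,t}$ appearing in Definition~\ref{d:dissipationTime}. By Duhamel's formula,
\begin{equation*}
  v(t_0 + \tau) = \mathcal S^{u,2}_{t_0, t_0+\tau} v(t_0)
    + \int_{t_0}^{t_0+\tau} \mathcal S^{u,2}_{s, t_0+\tau}\,\lap(c^3 - c)(s)\,ds\,.
\end{equation*}
Setting $\tau = \tau^*_2(u,\gamma)$, the definition of the dissipation time gives $\norm{\mathcal S^{u,2}_{t_0, t_0+\tau} v(t_0)}_{L^2} \leq \tfrac12 \norm{v(t_0)}_{L^2}$, and the energy estimate~\eqref{e:ee} controls the kernel $\norm{\mathcal S^{u,2}_{s,t_0+\tau}}_{\dot L^2 \to \dot L^2} \leq 1$ for the forcing integral.

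\smallskip

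First I would estimate the nonlinear forcing term. The crucial ingredient, flagged in the remark after Theorem~\ref{t:chmix}, is the two-dimensional bound $\norm{\lap(c^3 - c)}_{L^2} \lesssim (1 + \norm{c - \bar c}_{L^2})\norm{\lap c}_{L^2}^2 + \norm{\lap c}_{L^2}$ (schematically), which follows from expanding $\lap(c^3) = 3c^2 \lap c + 6c\abs{\grad c}^2$ together with the Gagliardo--Nirenberg and Ladyzhenskaya inequalities available in $d = 2$. This lets me bound the Duhamel integral by a constant times $\int_{t_0}^{t_0+\tau}\norm{\lap(c^3-c)}_{L^2}\,ds$, which by Cauchy--Schwarz is controlled by $\tau^{1/2}$ times the $L^2$-in-time norm of $\norm{\lap(c^3-c)}_{L^2}$, and ultimately by the time-averaged quantity $\int_{t_0}^{t_0+\tau}\norm{\lap c}_{L^2}^2\,ds$ appearing in hypothesis~\eqref{e:h2small}. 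Hypothesis~\eqref{e:h2small} bounds this average by $\tfrac{\beta + 2\gamma\mu}{\gamma^2}\norm{v(t_0)}_{L^2}^2$, so the entire forcing contribution is controlled by an explicit function of $\norm{v(t_0)}_{L^2}$, $\bar c$, $\gamma$, $\beta$, $\mu$, multiplied by a positive power of $\tau = \tau^*_2(u,\gamma)$.

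\smallskip

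The definition of $T_0'$ then emerges by forcing this whole expression to be small. Combining the two pieces, I obtain
\begin{equation*}
  \norm{v(t_0 + \tau)}_{L^2} \leq \tfrac12 \norm{v(t_0)}_{L^2} + \Phi\paren[\big]{\norm{v(t_0)}_{L^2}, \bar c, \beta, \gamma, \mu}\,\tau^{p}
\end{equation*}
for some power $p > 0$ and explicit $\Phi$. I would then \emph{choose} $T_0'$ to be the largest value in $(0, \gamma\ln\beta]$ for which, whenever $\tau = \tau^*_2(u,\gamma) \leq T_0'$, the second term is at most $(e^{-\mu\tau} - \tfrac12)\norm{v(t_0)}_{L^2}$; this is possible precisely because $e^{-\mu\tau} - \tfrac12 \to \tfrac12 > 0$ as $\tau \to 0$ while the perturbation vanishes like $\tau^p$. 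Solving this inequality explicitly gives the formula for $T_0'$, and since $\Phi$ is increasing in its first argument, $T_0'$ is manifestly \emph{decreasing} in $\norm{c(t_0) - \bar c}_{L^2}$, as claimed. The constraint $T_0' \leq \gamma\ln\beta$ is imposed so that Lemma~\ref{l:h1meanlargev2}'s doubling estimate~\eqref{e:doubling} applies on the same interval, ensuring consistency when the two cases are later stitched together.

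\smallskip

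The main obstacle I anticipate is the nonlinear estimate and its dimensional dependence. In $d = 2$ the borderline Sobolev embeddings $H^1 \hookrightarrow L^p$ (all $p < \infty$) and the Ladyzhenskaya inequality $\norm{f}_{L^4}^2 \lesssim \norm{f}_{L^2}\norm{\grad f}_{L^2}$ just barely allow $\norm{\lap(c^3)}_{L^2}$ to be closed against $\norm{\lap c}_{L^2}^2$ with a prefactor depending only on $\norm{c - \bar c}_{L^2}$ (and not on $\norm{\grad c}_{L^2}$). This is exactly the feature the authors highlight as distinguishing $d=2$ from $d=3$, and getting the interpolation exponents to balance so that the prefactor is controlled by the $\bar c$-dependent variance alone — rather than by a flow-dependent gradient norm — is the delicate point. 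Bookkeeping the exact constants to make $T_0'$ explicit (as equation~\eqref{e:T0prime} promises) is tedious but routine once the structure above is in place.
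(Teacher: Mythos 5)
Your overall strategy is exactly the paper's: write $c(t_0+\tau_2^*)-\bar c$ by Duhamel's formula, use the definition of $\tau_2^*(u,\gamma)$ together with the $L^2$-contraction property of $\mathcal S^{u,2}_{s,t}$ to halve the homogeneous part, bound the nonlinearity via 2D Gagliardo--Nirenberg so that the prefactor involves only $\norm{c-\bar c}_{L^2}$ (not $\norm{\grad c}_{L^2}$), and then choose $T_0'$ so small that the forcing contribution cannot undo the halving. However, one step as you wrote it would fail. You propose to bound $\int_{t_0}^{t_0+\tau}\norm{\lap(c^3-c)}_{L^2}\,ds$ by Cauchy--Schwarz as $\tau^{1/2}$ times the $L^2$-in-time norm of $\norm{\lap(c^3-c)}_{L^2}$, and then claim this is ``ultimately'' controlled by the quantity $\int_{t_0}^{t_0+\tau}\norm{\lap c}_{L^2}^2\,ds$ of hypothesis~\eqref{e:h2small}. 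It is not: squaring the pointwise bound $\norm{\lap(c^3-c)}_{L^2}\leq C(1+\bar c^2)(1+\norm{c-\bar c}_{L^2})(\norm{\lap c}_{L^2}+\norm{\lap c}_{L^2}^2)$ produces the integral $\int_{t_0}^{t_0+\tau}\norm{\lap c}_{L^2}^4\,ds$, and a bound on the time average of $\norm{\lap c}_{L^2}^2$ gives no control whatsoever of this fourth-power integral --- hypothesis~\eqref{e:h2small} permits tall, thin spikes of $\norm{\lap c}_{L^2}$ that keep $\int\norm{\lap c}_{L^2}^2\,ds$ small while making $\int\norm{\lap c}_{L^2}^4\,ds$ arbitrarily large. (Indeed, Cauchy--Schwarz only gives the inequality in the unhelpful direction, $\paren[\big]{\int\norm{\lap c}_{L^2}^2\,ds}^2\leq \tau\int\norm{\lap c}_{L^2}^4\,ds$.)

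The repair is to apply Cauchy--Schwarz only to the part of the integrand that is \emph{linear} in $\norm{\lap c}_{L^2}$, which is precisely what the paper does. First bound $\norm{c(s)-\bar c}_{L^2}\leq \sqrt{\beta}\,\norm{c(t_0)-\bar c}_{L^2}$ for all $s$ in the interval via the doubling estimate~\eqref{e:doubling}; note this is the real reason the constraint $T_0'\leq \gamma\ln\beta$ is needed \emph{inside} this proof (to control the prefactor of the nonlinear estimate), not merely for consistency when the two cases are stitched together. Then, writing $B=\norm{c(t_0)-\bar c}_{L^2}$, integrate the quadratic term directly using~\eqref{e:h2small}, namely $\int_{t_0}^{t_0+\tau}\norm{\lap c}_{L^2}^2\,ds\leq \tau\gamma^{-2}(\beta+2\gamma\mu)B^2$, and estimate the linear term by $\int_{t_0}^{t_0+\tau}\norm{\lap c}_{L^2}\,ds\leq \tau^{1/2}\paren[\big]{\int_{t_0}^{t_0+\tau}\norm{\lap c}_{L^2}^2\,ds}^{1/2}\leq \tau\gamma^{-1}(\beta+2\gamma\mu)^{1/2}B$. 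Both contributions are then linear in $\tau$, yielding a perturbation of size at most $C_{\beta,\mu}\gamma^{-2}(1+\bar c^2)(1+B^2)B\,\tau$, after which your concluding step --- choosing $T_0'$ so that this perturbation stays below $(e^{-\mu\tau}-\tfrac12)B$, and noting monotonicity in $B$ --- goes through exactly as you describe and reproduces the paper's explicit choice~\eqref{e:T0prime}.
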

\begin{remark*}
  The time $T_0'$ can be computed explicitly in terms of $\norm{c(t_0) - \bar c}_{L^2}$, $\beta$, $\gamma$, $\mu$, and $\bar c$, as can be seen from~\eqref{e:T0prime}, below.
\end{remark*}

Momentarily postponing the proof of Lemma~\ref{l:h1meansmallv2}, we prove Theorem~\ref{t:chmix} in 2D.

\begin{proof}[Proof of Theorem~\ref{t:chmix} when $d = 2$]
  Define
  \begin{equation*}
    T_0 = \min \set[\Big]{T_0', \frac{\ln \beta}{2 \mu} }\,,
  \end{equation*}
  where $T_0'$ is the time given by  Lemma~\ref{l:h1meansmallv2} with $t_0 = 0$.
  For conciseness, let $\tau^*_2 = \tau^*_2(u, \gamma)$, and suppose $\tau^*_2 < T_0'$.
  If
  \begin{equation}\label{e:2Dtimeav1}
    \frac{1}{\tau_2^*}\int_0^{\tau^*_2} \norm{\lap c}_{L^2}^2 \, ds
      \geq  \frac{\beta+2\gamma\mu}{\gamma^2}\norm{c\paren{t_0}-\bar c}_{L^2}^2\,,
  \end{equation}
  and since~$T_0' < \gamma \ln \beta$ by choice, Lemma~\ref{l:h1meanlargev2} applies and we must have
  \begin{equation}\label{e:L2tmp1}
    \norm{c(\tau^*_2) - \bar c}_{L^2}
      \leq e^{-\mu \tau^*_2} \norm{c_0 - \bar c}_{L^2}\,.
  \end{equation}
  If on the other hand~\eqref{e:2Dtimeav1} does not hold, then Lemma~\ref{l:h1meansmallv2} applies and~\eqref{e:L2tmp1} still holds.

  Since $T_0'$ is a decreasing function of~$\norm{c - \bar c}_{L^2}$, we may restart the above argument at time $\tau^*_2$.
  Proceeding inductively, we find
  \begin{equation*}
    \norm{c(n\tau_2^*)-\bar c}_{L^2}
      \leq e^{-\mu n\tau_2^*}\norm{c_0-\bar c}_{L^2}\,,
  \end{equation*}
  for all $n \in \N$.

  Now for any time $t \geq 0$, let $n \in \N$ be such that $t \in ( n \tau^*_2, (n+1) \tau^*_2)$.
  Since $t - n \tau^*_2 \leq \tau_2^* \leq \gamma \ln \beta$, Lemma~\ref{l:h1meanlargev2} applies and~\eqref{e:doubling} yields
  \begin{align*}
    \norm{c(t) - \bar c}_{L^2}
      &\leq \sqrt \beta \norm{c(n\tau_2^*) - \bar c}_{L^2}
      \leq \sqrt \beta e^{-\mu n\tau_2^*} \norm{c_0 - \bar c}_{L^2} \\
      &\leq  \sqrt \beta e^{-\mu t +\mu\tau_2^*} \norm{c_0 - \bar c}_{L^2}
      \leq \beta e^{-\mu t}\norm{c_0-\bar c}_{L^2}
      \,.
  \end{align*}
  The last inequality follows from $\tau^*_2 \leq \ln \beta / (2\mu)$.
  This completes the proof.
\end{proof}

\subsection{The three-dimensional case}\label{s:3D}

In this case, in order to prove the analog of Lemma~\ref{l:h1meansmallv2}, we need a stronger assumption on $\tau^*_2(u, \gamma)$.

\begin{lemma}\label{l:h1meansmallv23d}
For any $t_0\geq 0$, there exists a time $T_1' = T_1'(\norm{c(t_0) - \bar c}_{L^2}, \bar c, \beta, \gamma, \mu)$ such that if
\begin{gather}
\label{e:t2small3}
  (1 + \norm{\grad u}_{L^\infty})^{1/2} \tau_2^*(u,\gamma)
    \leq T_1'\,,
    \\
\label{e:h2small3}
\frac{1}{2\tau_2^*(u,\gamma)} \int_{t_0}^{t_0 + 2\tau^*_2(u, \gamma)} \norm{\lap c}_{L^2}^2 \, ds
    \leq \frac{\beta + 2 \gamma \mu }{\gamma^2}\norm{c(t_0)-\bar c}_{L^2}^2\,,
\end{gather}
then
\begin{equation}\label{e:3dexp}
\norm{c(t_0+2\tau_2^*(u,\gamma))-\bar c}_{L^2}\leq e^{-2\mu\tau_2^*(u,\gamma)}\norm{c(t_0)-\bar c}_{L^2}\,.
\end{equation}
Moreover, the time $T_1'$ can be chosen to be decreasing as a function of~$\norm{ c(t_0) - \bar c }_{L^2}$.
\end{lemma}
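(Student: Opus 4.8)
The plan is to exploit the extra factor of two in the time appearing in~\eqref{e:3dexp}: I would use the first subinterval $[t_0,t_0+\tau_2^*]$ to regularize the solution and the second subinterval $[t_0+\tau_2^*,t_0+2\tau_2^*]$ to run a Duhamel argument. Write $\phi\defeq c-\bar c$; since $u$ is divergence-free and $\bar c$ is constant, $\phi$ is mean-zero and solves $\partial_t\phi+u\cdot\grad\phi+\gamma\lap^2\phi=\lap(c^3-c)$, whose right-hand side is also mean-zero. With $\mathcal S^{u,2}$ the solution operator of~\eqref{e:ad} for $\alpha=2$, Duhamel's formula on the second subinterval gives
\[
  \phi(t_0+2\tau_2^*)=\mathcal S^{u,2}_{t_0+\tau_2^*,\,t_0+2\tau_2^*}\phi(t_0+\tau_2^*)+\int_{t_0+\tau_2^*}^{t_0+2\tau_2^*}\mathcal S^{u,2}_{s,\,t_0+2\tau_2^*}\bigl[\lap(c^3-c)\bigr](s)\,ds.
\]
By Definition~\ref{d:dissipationTime} the linear term has $L^2$ norm at most $\tfrac12\norm{\phi(t_0+\tau_2^*)}_{L^2}$, and the variance bound~\eqref{e:doubling} (valid once $T_1'\le\tfrac12\gamma\ln\beta$, which forces $2\tau_2^*\le\gamma\ln\beta$) gives $\norm{\phi(t_0+\tau_2^*)}_{L^2}\le\sqrt\beta\,\norm{\phi(t_0)}_{L^2}$. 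Since $\beta\le2$ the prefactor $\tfrac12\sqrt\beta<1$ leaves room to absorb the nonlinear contribution, so it suffices to bound $\int_{t_0+\tau_2^*}^{t_0+2\tau_2^*}\norm{\lap(c^3-c)}_{L^2}\,ds$ by a small fraction of $\norm{\phi(t_0)}_{L^2}$.

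For the nonlinear term I would expand $c=\bar c+\phi$ and use $\lap(\phi^3)=3\phi^2\lap\phi+6\phi\abs{\grad\phi}^2$ together with the three-dimensional Gagliardo--Nirenberg and Agmon inequalities, in particular $\norm{\phi}_{L^\infty}^2\le C\norm{\grad\phi}_{L^2}\norm{\lap\phi}_{L^2}$ and $\norm{\grad\phi}_{L^4}^2\le C\norm{\grad\phi}_{L^2}^{1/2}\norm{\lap\phi}_{L^2}^{3/2}$. These yield a pointwise-in-time bound of the form $\norm{\lap(c^3-c)}_{L^2}\le C\norm{\grad\phi}_{L^2}\norm{\lap\phi}_{L^2}^2+C(\bar c)\norm{\grad\phi}_{L^2}^{1/2}\norm{\lap\phi}_{L^2}^{3/2}+C(\bar c)\norm{\lap\phi}_{L^2}$. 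This is exactly where the dimension matters: in two dimensions the same computation produces $\norm{\phi}_{L^2}$ in place of $\norm{\grad\phi}_{L^2}$, and $\norm{\phi}_{L^2}$ is controlled by~\eqref{e:doubling} alone. Integrating over the second subinterval, applying H\"older in time, and invoking~\eqref{e:h2small3} to replace $\int\norm{\lap\phi}_{L^2}^2\,ds$ by $2\tau_2^*\tfrac{\beta+2\gamma\mu}{\gamma^2}\norm{\phi(t_0)}_{L^2}^2$, reduces the whole estimate to controlling $G\defeq\sup_{[t_0+\tau_2^*,\,t_0+2\tau_2^*]}\norm{\grad\phi}_{L^2}$.

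The heart of the proof, and the main obstacle, is to bound $G$, since $T_1'$ may depend only on $\norm{\phi(t_0)}_{L^2}$, $\bar c$, $\beta$, $\gamma$, $\mu$ and \emph{not} on $\norm{\grad\phi(t_0)}_{L^2}$. I would test the $\phi$-equation against $-\lap\phi$ (the manipulations are justified by the regularity in Proposition~\ref{p:existence}); after integrating the advection term by parts and using incompressibility it becomes $-\int(\partial_j u_i)(\partial_i\phi)(\partial_j\phi)$, bounded by $\norm{\grad u}_{L^\infty}\norm{\grad\phi}_{L^2}^2$ --- this is the sole source of the $\norm{\grad u}_{L^\infty}$ dependence and hence of the strengthened hypothesis~\eqref{e:t2small3}. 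The hyperdiffusion supplies the good term $-\gamma\norm{\grad\lap\phi}_{L^2}^2$, into which the resulting nonlinear forcing $\int\grad\lap\phi\cdot\grad(c^3-c)$ is absorbed by Young's inequality. To avoid any appeal to $\norm{\grad\phi(t_0)}_{L^2}$ I would multiply the resulting differential inequality by the weight $(t-t_0)$ and integrate; the smoothing weight annihilates the initial $H^1$ datum, and evaluating at $t\ge t_0+\tau_2^*$ yields $G^2\le C\bigl(1+\norm{\grad u}_{L^\infty}\tau_2^*\bigr)\tfrac1{2\tau_2^*}\int_{t_0}^{t_0+2\tau_2^*}\norm{\grad\phi}_{L^2}^2\,ds+(\text{forcing})$. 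The averaged quantity here is controlled by data, because $\int\norm{\grad\phi}_{L^2}^2\,ds\le\sup\norm{\phi}_{L^2}\int\norm{\lap\phi}_{L^2}\,ds$ is bounded through~\eqref{e:doubling} and~\eqref{e:h2small3}. Consequently $G^2\le C(\text{data})(1+\norm{\grad u}_{L^\infty}\tau_2^*)$, so that $G\tau_2^*\le C(\text{data})\bigl(\tau_2^*+(1+\norm{\grad u}_{L^\infty})^{1/2}(\tau_2^*)^{3/2}\bigr)$, which~\eqref{e:t2small3} (recall it forces $\tau_2^*\le T_1'$) makes as small as desired.

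The delicate point is that the forcing in the $H^1$ estimate contains a quartic self-interaction, through a term like $\norm{c}_{L^\infty}^4\norm{\grad\phi}_{L^2}^2$, so the bound for $G$ is really obtained by a short-time continuation argument on the second subinterval of the form $G^2\le A+BG^4$ with $A$ controlled and $B$ small, using the weighted smoothing estimate to start the bootstrap. Granting this, I would assemble the pieces: the nonlinear contribution is at most a polynomial in $G$ and $\bar c$ times $\tau_2^*\gamma^{-2}\norm{\phi(t_0)}_{L^2}^2$ plus terms of order $\tau_2^*\norm{\phi(t_0)}_{L^2}$, so choosing $T_1'$ small enough --- and decreasing in $\norm{\phi(t_0)}_{L^2}$, as required --- makes this at most $(e^{-2\mu\tau_2^*}-\tfrac12\sqrt\beta)\norm{\phi(t_0)}_{L^2}$, which is precisely~\eqref{e:3dexp}.
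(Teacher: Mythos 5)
Your overall architecture --- Duhamel on a sub-interval of length at least $\tau_2^*$, the doubling bound~\eqref{e:doubling}, the 3D Gagliardo--Nirenberg expansion of $\norm{\lap(c^3-c)}_{L^2}$, Cauchy--Schwarz in time combined with~\eqref{e:h2small3} --- matches the paper's, and your identification of the time-uniform bound on $\norm{\grad c}_{L^2}$ as the crux, and as the sole source of the $\norm{\grad u}_{L^\infty}$ dependence, is exactly right. Where you genuinely diverge is in how that $H^1$ bound is obtained. The paper never tests with $-\lap\phi$: it Chebyshev-selects a time $t_1\in[t_0,t_0+\tau_2^*]$ at which $\norm{\lap c(t_1)}_{L^2}$ is controlled by~\eqref{e:h2small3}, bounds the free energy $\FE(t_1)$ by interpolation, and then invokes the free-energy estimate of Lemma~\ref{l:H1bound}: testing~\eqref{e:ach} with the chemical potential $c^3-c-\gamma\lap c$ makes the cubic nonlinearity part of the \emph{dissipation} (it contributes $-\norm{\grad(c^3-c-\gamma\lap c)}_{L^2}^2$), so the only bad term is the advection term $\gamma\norm{\grad u}_{L^\infty}\norm{\grad c}_{L^2}^2$, whose time integral is controlled by Poincar\'e and the basic energy estimate. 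This yields a bound $\norm{\grad c(t)}_{L^2}^2\lesssim (1+\norm{\grad u}_{L^\infty})\,C(\mathrm{data})/\gamma^3$ in one pass: linear in $\norm{\grad u}_{L^\infty}$, with no Gronwall exponential and no bootstrap. The quartic self-interaction that dominates your argument simply never appears.

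By contrast, your route (test with $-\lap\phi$ and absorb $\grad(c^3-c)$ into $\gamma\norm{\grad\lap\phi}_{L^2}^2$) converts the nonlinearity into a forcing of size $\gamma^{-1}\norm{c}_{L^\infty}^4\norm{\grad\phi}_{L^2}^2\sim\gamma^{-1}\norm{\lap\phi}_{L^2}^2G^4$, and this is where your sketch has a genuine gap. First, the weight $(t-t_0)$ alone cannot close this term: writing $K\defeq\sup_s\,(s-t_0)G^2(s)$, one gets
\begin{equation*}
  \int_{t_0}^{t}(s-t_0)\norm{\lap\phi}_{L^2}^2G^4\,ds
    \leq K\int_{t_0}^{t}\norm{\lap\phi}_{L^2}^2G^2\,ds\,,
\end{equation*}
and the remaining integral is not controlled by the hypotheses: bounding $G^2\leq K/(s-t_0)$ produces a non-integrable singularity at $s=t_0$ (since $\norm{\lap\phi}_{L^2}^2$ is merely integrable in time), while bounding $G^2\leq\norm{\phi}_{L^2}\norm{\lap\phi}_{L^2}$ produces $\int\norm{\lap\phi}_{L^2}^3\,ds$, which~\eqref{e:h2small3} does not control. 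The workable fix is the one you almost state: use your a priori bound on the time average of $\norm{\grad\phi}_{L^2}^2$ to Chebyshev-select a starting time $t_1'\in[t_0,t_0+\tau_2^*]$ with $G^2(t_1')\lesssim C(\mathrm{data})/\gamma$, and run an \emph{unweighted} continuation argument on $[t_1',t_0+2\tau_2^*]$. Second, in the resulting inequality $M\leq A+B'M^2$, your claim that ``$A$ is controlled and $B$ is small'' is not quite accurate: $A$ contains the term $\norm{\grad u}_{L^\infty}\tau_2^*\cdot C(\mathrm{data})/\gamma$, and $\norm{\grad u}_{L^\infty}\tau_2^*$ is \emph{not} made small by~\eqref{e:t2small3} (the paper explicitly remarks that $(1+\norm{\grad u}_{L^\infty})\tau_2^*$ cannot be made small). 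What rescues the argument is that the continuation only needs the product $AB'$ to be small, and the conclusion only needs $\sqrt{A}\,\tau_2^*$ to be small; both hold because $\norm{\grad u}_{L^\infty}(\tau_2^*)^2\leq (T_1')^2$ under~\eqref{e:t2small3}. These verifications can be carried out, so your approach is completable and is a legitimately different (if heavier) proof; but as written the bootstrap --- which you correctly flag as the delicate point --- is the step that needs repair, and any variant that lets $\norm{\grad u}_{L^\infty}\tau_2^*$ enter an exponential, as ordinary Gronwall would, fails outright.
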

\begin{remark*}
  The time $T_1'$ can be computed explicitly in terms of $\norm{c(t_0) - \bar c}_{L^2}$, $\beta$, $\gamma$, $\mu$, and $\bar c$, as can be seen from~\eqref{e:T1prime} below.
\end{remark*}

Momentarily postponing the proof of Lemma~\ref{l:h1meansmallv23d}, we prove Theorem~\ref{t:chmix} in 3D.

\begin{proof}[Proof of Theorem~\ref{t:chmix} when $d = 3$]
  Let $T_1'$ be the time given by Lemma~\ref{l:h1meansmallv23d} with $t_0 = 0$, and define
  \begin{equation*}
    T_1 = \min \set[\Big]{ T_1', \frac{\ln \beta}{4 \mu} }\,.
  \end{equation*}
  The remainder of the proof is now identical to the proof when $d = 2$ (page~\pageref{e:2Dtimeav1}) with Lemma~\ref{l:h1meansmallv2} replaced with Lemma~\ref{l:h1meansmallv23d}.
\end{proof}

\subsection{Variance decay in 2D (Lemmas~\ref{l:h1meanlargev2} and~\ref{l:h1meansmallv2})}

It now remains to prove the lemmas.
The variance decay when $\norm{\lap c}_{L^2}$ is large follows directly from the energy inequality in both~2D and~3D.
We prove this first.

\begin{proof}[Proof of Lemma \ref{l:h1meanlargev2}]
  For simplicity and without loss of generality we assume $t_0 = 0$.
Multiplying equation~\eqref{e:ach} by $c-\bar{c}$ and integrating over $\T^d$, we obtain
\begin{align}
  \partial_t \norm{ c-\bar{c}}_{L^{2}}^{2}
    &= 2\ip{\Delta(c^3-c-\gamma\Delta c), c-\bar{c}}\nonumber\\
    &\leq
      -6\norm{ c\nabla c }_{L^{2}}^{2}
      +2\norm{ c-\bar{c}}_{L^{2}}
	\norm{ \lap c }_{L^{2}}
        -2\gamma\norm{ \lap c }_{L^{2}}^{2}\,.
    \label{e:dtL2ineq}
\end{align}
Here the notation $\ip{f, g} = \int_{\T^d} f g \, dx$ denotes the standard $L^2$ inner-product on $\T^d$.  Drop the first term in~\eqref{e:dtL2ineq} and apply Young's inequality to find
\begin{equation}\label{eq:L2 rate bound}
  \partial_t \norm{c-\bar{c}}_{L^{2}}^{2}
    \leq
      -\gamma\norm{\lap c}_{L^{2}}^{2}
      +\frac{1}{\gamma}\norm{c-\bar{c}}_{L^{2}}^{2}\,,
\end{equation}
and hence
\begin{equation}\label{eq:L2rawgrowth}
  \norm{ c(t)-\bar{c}}_{L^{2}}^{2}
    \leq
      \norm{ c_0 -\bar{c}}_{L^{2}}^{2}
      \,e^{t/\gamma}
    \quad \text{for all }t \geq 0\,.
\end{equation}
In particular, if $t \in (0,~  \gamma \ln \beta)$, we see that~\eqref{e:doubling} holds with $t_0 = 0$.

For~\eqref{e:L2expDecay}, note that integration of~\eqref{eq:L2 rate bound} from $0$ to~$\tau$ with~\eqref{e:doubling} and~\eqref{e:h1meanlarge} gives
\begin{equation*}
  \norm{c(\tau) - \bar c}_{L^2}^2
    \leq
      \norm{c_0 - \bar c}_{L^2}^2\paren[\Big]{1 + \frac{\beta \tau}{\gamma} }
      - \gamma \int_0^\tau \norm{\lap c}_{L^2}^2\,ds
    \leq
      \norm{c_0 - \bar c}_{L^2}^2\paren{1 - 2\mu \tau }\,.
\end{equation*}
Since $1 - 2\mu \tau \leq e^{-2\mu \tau}$, this proves~\eqref{e:L2expDecay} as desired.
\end{proof}

We now turn to Lemma~\ref{l:h1meansmallv2}, where the time integral of~$\norm{\lap c}_{L^2}^2$ is assumed small.
In this case, by definition of~$\tau^*_2$, the linear terms halve the variance of~$c$ in time $\tau^*_2$.
If $\tau^*_2$ is small enough, then we show that the nonlinear terms cannot increase the variance too much in this time interval.

\begin{proof}[Proof of Lemma \ref{l:h1meansmallv2}]
  For notational convenience, we use $\mathcal S_{s,t}$ to denote $\mathcal S^{u, 2}_{s,t}$, the solution operator in Definition~\ref{d:dissipationTime} with $\alpha = 2$.
  As before, we also use $\tau_2^*$ to denote $\tau_2^*(u,\gamma)$.
  For simplicity, and without loss of generality, we will again assume $t_0 = 0$.

  By Duhamel's principle, we know
  \begin{equation*}
    c(\tau_2^{*})-\bar c
      = \mathcal S_{0,\tau_2^{*}}(c_0-\bar c)
	+\int_{0}^{\tau_2^{*}}\mathcal S_{s,\tau_2^{*}}(\Delta (c^3(s) - c(s)))\,ds\,.
  \end{equation*}
By definition of~$\tau^*_2 = \tau^*_2(u, \gamma)$, and the fact that $\mathcal S^{u,\alpha}_{s,t}$ is an $L^2$-contraction, we have
  \begin{align}\label{e:l2growth}
    \norm{c(\tau_2^{*})-\bar c}_{L^2}\leq \frac{B}{2}+\int_{0}^{\tau_2^{*}}\norm{\Delta (c^3-c)}_{L^2}\,ds\,,
  \end{align}
  where $B\defeq \norm{c_0-\bar c}_{L^2}$.
  We now estimate the second term on the right of~\eqref{e:l2growth}.
  First note
  \begin{align}\label{e:nonlinearterm}
    \nonumber
    \norm{\Delta (c^3-c)}_{L^2}
    &=\norm{6c\abs{\nabla c}^2+3c^2\Delta c-\Delta c}_{L^2}\\
    \nonumber
    &\leq 6\norm{c-\bar{c}}_{L^\infty}\norm{\nabla c}_{L^4}^2+6\abs{\bar{c}}\norm{\nabla c}_{L^4}^2+3\norm{c-\bar{c}}_{L^\infty}^2\norm{\Delta c}_{L^2}\\
    &\quad+6\abs{\bar{c}}\norm{c-\bar{c}}_{L^\infty}\norm{\Delta c}_{L^2}+(3\bar{c}^2+1)\norm{\Delta c}_{L^2}\,.
  \end{align}
  By the Gagliardo--Nirenberg inequality we know
  \begin{align*}
    \norm{c-\bar c}_{L^\infty}&\leq C\norm{\lap c}_{L^2}^{d/4}\norm{c-\bar c}_{L^2}^{1-d/4}\,,
    \\
    \norm{\nabla c}_{L^4}&\leq C\norm{\lap c}_{L^2}^{(4+d)/8}\norm{c-\bar c}_{L^2}^{(4-d)/8}\,,
  \end{align*}
  for some dimensional constant $C$.
  Here, and subsequently, we assume $C$ is a purely dimensional constant that may increase from line to line.
  Substituting this in~\eqref{e:nonlinearterm} when $d=2$ we find
  \begin{align}\label{e:non1}
    \nonumber
    \norm{\Delta (c^3-c)}_{L^2}
      &\leq C \norm{\lap c}_{L^2}^{2}\norm{c-\bar{c}}_{L^2}
    \\
    \nonumber
      &\qquad
	+ C \abs{\bar{c}}\norm{\lap c}_{L^2}^{3/2}\norm{c-\bar{c}}_{L^2}^{1/2}
	+ (3\bar{c}^2+1)\norm{\lap c}_{L^2}
    \\
      &\leq C
	  (1 + \bar c^2)
	  (1 + \norm{c - \bar c}_{L^2})
	  (\norm{\lap c}_{L^2} + \norm{\lap c}_{L^2}^2)\,.
  \end{align}

  If we choose $T_0'$ small enough to ensure $T_0' < \gamma \ln \beta$, then~\eqref{e:doubling}, \eqref{e:h2small}, \eqref{e:l2growth} and \eqref{e:non1} yield
  \begin{equation*}
    \norm{c(\tau_2^{*})-\bar c}_{L^2}
      \leq \frac{B}{2}
	+ \frac{C_{\beta,\mu} \tau^*_2}{\gamma^2} (1 + \bar c^2) (1 + B^2) B\,.
  \end{equation*}
  Here, $C_{\beta, \mu}$ is a constant that only depends on~$\beta$, $\mu$ that may increase from line to line.
  Now choosing
  \begin{equation}\label{e:T0prime}
    T_0' = \min\set[\Big]{
      \frac{\gamma^2}{4 C_{\beta, \mu} (1 + \bar c^2) (1 + B^2) }\,,\;
      \gamma \ln \beta\,,\;
      \frac{1}{4\mu}
      }
  \end{equation}
  we see that whenever $\tau^*_2 \leq T_0'$ we must have
  \begin{equation*}
    \norm{c(\tau^*_2) - \bar c}_{L^2}
      \leq \frac{3B}{4} \leq (1 - \mu \tau^*_2) B
      \leq e^{-\mu \tau^*_2} \norm{c_0 - \bar c}_{L^2}\,,
  \end{equation*}
  as claimed.
  Clearly the choice of~$T_0'$ above is decreasing in~$B$, finishing the proof.
\end{proof}

\subsection{Variance decay in 3D (Lemma~\ref{l:h1meansmallv23d})}

To prove variance decay in 3D, we first need an $H^1$ bound.
For the remainder of this subsection we assume $d = 3$.

\begin{lemma}\label{l:H1bound}
  Define the \emph{free energy}, $\FE$, by
  \begin{equation*}
    \FE(t)
      \defeq
	\tfrac{1}{4}\int_{\T^3}(c^2-1)^2\,dx
	+\tfrac{1}{2}\gamma \int_{\T^3} \abs{\grad c}^2\,dx\,.
  \end{equation*}
  Then, for any $t_0, \tau \geq 0$ we have
  \begin{equation}\label{e:H1pwbound}
    \norm{\grad c(t_0+\tau)}_{L^2}^2
      \leq \frac{2\FE(t_0)}{\gamma}
	+\frac{\norm{\grad u}_{L^\infty}}{2 \pi^2 \gamma} \, e^{\tau/\gamma}
	\norm{c(t_0)-\bar c}_{L^2}^2\,.
  \end{equation}
\end{lemma}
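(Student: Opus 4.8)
The plan is to derive a differential inequality for the free energy $\FE$ and then convert it into the stated pointwise bound on $\norm{\grad c}_{L^2}^2$, feeding in the elementary $L^2$ variance estimates~\eqref{eq:L2 rate bound} and~\eqref{eq:L2rawgrowth} that were already established in the proof of Lemma~\ref{l:h1meanlargev2}.

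First I would differentiate $\FE$ along the flow. Writing the chemical potential $w \defeq c^3 - c - \gamma\lap c$, so that equation~\eqref{e:ach} reads $\partial_t c = \lap w - u\cdot\grad c$, a direct computation (differentiating the chemical and interfacial parts of $\FE$ and integrating by parts once) gives
\[
  \frac{d}{dt}\FE = \int_{\T^3} w\,\partial_t c \,dx
    = -\norm{\grad w}_{L^2}^2 - \int_{\T^3} w\,(u\cdot\grad c)\,dx\,.
\]
The crux is the advection term, and here I would split $w$ into its chemical part $c^3-c$ and interfacial part $-\gamma\lap c$. Since $(c^3-c)\grad c = \grad(\tfrac14 c^4 - \tfrac12 c^2)$, incompressibility forces $\int_{\T^3}(c^3-c)(u\cdot\grad c)\,dx = 0$. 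For the interfacial part, integrating by parts and using $\grad\cdot u = 0$ again (the piece involving second derivatives of $c$ collapses to $\tfrac{\gamma}{2}\int u\cdot\grad\abs{\grad c}^2 = 0$) leaves only the quadratic form $-\gamma\int_{\T^3}(\partial_i u_j)(\partial_i c)(\partial_j c)\,dx$. Dropping the nonpositive $-\norm{\grad w}_{L^2}^2$ and estimating the surviving term yields the clean inequality
\[
  \frac{d}{dt}\FE \leq \gamma\,\norm{\grad u}_{L^\infty}\,\norm{\grad c}_{L^2}^2\,.
\]

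The key idea that avoids a $\norm{\grad u}$-dependent exponential rate is to \emph{not} close a Gr\"onwall inequality on $\FE$ (whose right-hand side satisfies $\norm{\grad c}_{L^2}^2 \le 2\FE/\gamma$), but instead to integrate over $[t_0, t_0+\tau]$ and bound $\int_{t_0}^{t_0+\tau}\norm{\grad c}_{L^2}^2\,ds$ directly. On the torus $\grad c$ is mean-zero and $\norm{\grad^2 c}_{L^2} = \norm{\lap c}_{L^2}$, so the Poincar\'e inequality gives $\norm{\grad c}_{L^2}^2 \le (2\pi)^{-2}\norm{\lap c}_{L^2}^2$. Rearranging~\eqref{eq:L2 rate bound} as $\gamma\norm{\lap c}_{L^2}^2 \le \tfrac{1}{\gamma}\norm{c-\bar c}_{L^2}^2 - \partial_s\norm{c-\bar c}_{L^2}^2$, integrating, and inserting the growth bound~\eqref{eq:L2rawgrowth} controls $\int_{t_0}^{t_0+\tau}\norm{\lap c}_{L^2}^2\,ds \le \gamma^{-1}e^{\tau/\gamma}\norm{c(t_0)-\bar c}_{L^2}^2$; this is precisely where the factor $e^{\tau/\gamma}$ in~\eqref{e:H1pwbound} originates. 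Combining these with $\tfrac{\gamma}{2}\norm{\grad c(t_0+\tau)}_{L^2}^2 \le \FE(t_0+\tau)$ produces the claimed bound, the constant $1/(2\pi^2)$ appearing as $2\cdot(2\pi)^{-2}$.

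I expect the main obstacle to be the careful bookkeeping of the advection term in the energy identity — in particular verifying that both the chemical contribution and the Hessian piece of the interfacial contribution vanish by incompressibility, leaving only the $\grad u$-weighted quadratic form. The secondary (and conceptually essential) point is the decoupling in the last paragraph: feeding the a priori $L^2$ estimates into the \emph{integrated} free-energy inequality, rather than iterating Gr\"onwall on $\FE$ itself, is what keeps the advective correction controlled by the variance $\norm{c(t_0)-\bar c}_{L^2}^2$ with an explicit prefactor instead of an uncontrolled $\norm{\grad u}$-dependent exponential.
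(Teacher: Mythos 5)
Your proposal is correct and follows essentially the same route as the paper: the free-energy identity obtained by testing with the chemical potential $c^3-c-\gamma\lap c$, the incompressibility argument reducing the advection term to the bound $\gamma\norm{\grad u}_{L^\infty}\norm{\grad c}_{L^2}^2$, and then the time-integrated estimate combining Poincar\'e with \eqref{eq:L2 rate bound}--\eqref{eq:L2rawgrowth} to control $\gamma\int\norm{\lap c}_{L^2}^2\,ds$ by $e^{\tau/\gamma}\norm{c(t_0)-\bar c}_{L^2}^2$. Even the constant bookkeeping ($1/(2\pi^2) = 2\cdot(2\pi)^{-2}$) matches the paper's proof exactly.
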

\begin{proof}
  Without loss of generality assume $t_0 = 0$.
  Multiplying~\eqref{e:ach} by $c^3-c-\gamma \Delta c$ and integrating over~$\T^3$, we have
  \begin{align}\label{e:freeenergy}
    \partial_t \FE
      +\ip{u\cdot \grad c,\; c^3-c-\gamma\Delta c}
      =-\norm{\grad(c^3-c-\gamma \Delta c)}_{L^2}^2\,.
  \end{align}
  Since $u$ is divergence free,
  \begin{equation*}
    \abs{\langle u\cdot \grad c,~c^3-c-\gamma \Delta c\rangle}
      =\abs{\langle u\cdot \grad c, \gamma \Delta c\rangle}
      \leq \gamma \norm{\grad u}_{L^\infty}\norm{\grad c}_{L^2}^2\,.
  \end{equation*}
  Use this in~\eqref{e:freeenergy}, integrate in time then use Poincar\'e's inequality to get
  \begin{align}\label{e:freeenergy1}
    \nonumber
    \int_{0}^{\tau}\norm{\grad(c^3-c-\gamma\Delta c)}^2\,ds
    +\FE(\tau)
      &\leq
	\FE(0)
	+\gamma\norm{\grad u}_{L^\infty}\int_{0}^{\tau}\norm{\grad c}_{L^2}^2\,ds\\
      &\leq
	\FE(0)
	+\frac{\gamma\norm{\grad u}_{L^\infty}}{4 \pi^2}
	  \int_{0}^{\tau}\norm{\Delta c}_{L^2}^2\,ds\,.
  \end{align}

  Time-integrating~\eqref{eq:L2 rate bound} and using~\eqref{eq:L2rawgrowth}, we find
  \begin{align}\label{e:H1bound}
    \gamma \int_{0}^{\tau}\norm{\Delta c}_{L^2}^2\,ds
      \leq
	\frac{1}{\gamma} \int_{0}^{\tau} \norm{c-\bar c}_{L^2}^2\,ds
	+ \norm{c(t_0)-\bar c}_{L^2}^2
      \leq
	e^{\tau/\gamma} \norm{c_0-\bar c}_{L^2}^2\,.
  \end{align}
  Finally, we substitute~\eqref{e:H1bound} in~\eqref{e:freeenergy1} to obtain
  \begin{equation*}
    \tfrac{1}{2}\gamma \norm{\grad c(\tau)}_{L^2}^2
      \leq \FE(\tau)
	\leq \FE(0)
	  +\frac{\norm{\grad u}_{L^\infty}}{4 \pi^2}
	    \, e^{\tau/\gamma}\norm{c_0-\bar c}_{L^2}^2\,,
  \end{equation*}
  which immediately implies~\eqref{e:H1pwbound} as claimed.
\end{proof}

We now prove Lemma~\ref{l:h1meansmallv23d}.
\begin{proof}[Proof of Lemma \ref{l:h1meansmallv23d}]
  As before, we assume without loss of generality that $t_0 = 0$.
  In the 3D case, we will express~$c(2\tau_2^*)$ using Duhamel's principle.
  However, for reasons that will be explained below, we need to use a starting time of~$t_1 \in [0, \tau^*_2]$, which might not be~$0$.  Note that for any $t_1 \in [0, \tau^*_2]$, we have
  \begin{align*}
    c(2\tau_2^{*})-\bar c=\mathcal{S}_{t_1,2\tau_2^{*}}(c(t_1)-\bar c)+\int_{t_1}^{2\tau_2^{*}}\mathcal{S}_{s,2\tau_2^{*}}(\Delta(c^3-c))\,ds\,.
  \end{align*}
  Since $2 \tau^*_2 - t_1 \geq \tau^*_2$, the above implies
  \begin{align}\label{e:t1}
    \norm{c(2\tau_2^{*})-\bar c}_{L^2}
      \leq \tfrac{1}{2} \norm{c(t_1) - \bar c}_{L^2}
	+\int_{t_1}^{2\tau_2^{*}}\norm{\Delta(c^3-c)}\,ds\,.
  \end{align}
  To bound the first term on the right, we note that if $2 T_1' \leq \gamma \ln \beta$, then~\eqref{e:doubling} implies
  \begin{align}\label{e:3dgrowth}
    \norm{c(t_1)-\bar c}_{L^2}
     \leq \sqrt{\beta} B\,,
  \end{align}
  where $B \defeq \norm{c_0 - \bar c}_{L^2}$.

  To bound the second term on the right-hand side, recall the Gagliardo--Nirenberg interpolation inequalities in 3D guarantee
  \begin{align*}
    \norm{c-\bar c}_{L^\infty}
      &\leq C \norm{\grad c}_{L^2}^{1/2} \norm{\Delta c}_{L^2}^{1/2}\,,
    \\
    \norm{\grad c}_{L^4}
      &\leq C \norm{\grad c}_{L^2}^{1/4} \norm{\Delta c}_{L^2}^{3/4}\,.
  \end{align*}
  Expanding $\norm{ \lap( c^3 - c) }_{L^2}$ as in~\eqref{e:nonlinearterm}, and using these inequalities, we see
  \begin{align}\label{e:3dnonlinear}
    \nonumber
    \norm{\Delta(c^3-c)}_{L^2}
      &\leq
	C\norm{\Delta c}_{L^2}^2\norm{\grad c}_{L^2}
	+ C|\bar c| \norm{\Delta c}_{L^2}^{3/2}\norm{\grad c}_{L^2}^{1/2}
    \\
    \nonumber
      &\qquad +(3\bar c^2+1)\norm{\Delta c}_{L^2}\,,
    \\
      &\leq
	C (1+\bar c^2) (1+\norm{\grad c}_{L^2})(\norm{\Delta c}_{L^2}+\norm{\Delta c}_{L^2}^2)\,.
  \end{align}
  The difference from the 2D case is precisely at this step, as the above estimate does not allow us to bound the second term on the right of~\eqref{e:t1} using~\eqref{e:h2small3} and~\eqref{e:doubling} alone.
  Indeed, to bound this term, we now need a time-uniform bound on $\norm{\grad c}_{L^2}$, in combination with~\eqref{e:h2small3} and~\eqref{e:doubling}.
  Unfortunately, the only such bounds we can obtain depend on~$u$, and thus our criterion in 3D involves both~$\norm{\grad u}_{L^\infty}$ and $\tau^*_2$.

  To carry out the details, note first that by Chebyshev's inequality and~\eqref{e:h2small3} we can choose $t_1\in [0, \tau_2^{*}]$ so that
  \begin{equation}\label{e:H2bound}
    \norm{\lap c(t_1)}_{L^2}^2\leq \frac{2\beta+4\gamma \mu}{\gamma^2}B^2\,.
  \end{equation}
  Using the Gagliardo--Nirenberg inequality and~\eqref{e:H2bound} we note that the free energy~$\FE$ at time $t_1$ can be bounded by
  \begin{align*}
    \FE(t_1)
      &\leq \tfrac{1}{4}\norm{c(t_1)}_{L^4}^4
	+ \tfrac{1}{2}\gamma\norm{\grad c(t_1)}_{L^2}^2
	+ \tfrac{1}{4}
    \\
      &\leq
	2\norm{c(t_1)-\bar c}_{L^4}^4+\tfrac{1}{2}\gamma\norm{\grad c}_{L^2}^2+2\bar c^4+\tfrac{1}{4}\\
    &\leq C\norm{c(t_1)-\bar c}_{L^2}^{5/2}\norm{\lap c(t_1)}_{L^2}^{3/2}+\frac{\gamma}{8\pi^2}\norm{\lap c(t_1)}_{L^2}^2+2\bar c^4+\tfrac{1}{4}\\
    &\leq \frac{C \beta^{5/4}(2\beta+4\gamma \mu)^{3/4}}{\gamma^{3/2}}B^4+\frac{\beta+2\gamma \mu}{4\pi^2\gamma}B^2+2\bar c^4+\tfrac{1}{4}
    \\
    &\leq
      \frac{ C_{\beta,\mu}  B^4}{\gamma^2} + 2 (\bar c^4 + 1)\,.
  \end{align*}
  Thus, for any time $t \in [t_1, 2 \tau^*]$, we use Lemma~\ref{l:H1bound} and obtain
  \begin{align}
    \nonumber
    \norm{\grad c(t)}_{L^2}^2
      &\leq \frac{2\FE(t_1)}{\gamma}
	+\frac{\norm{\grad u}_{L^\infty}}{2\pi^2\gamma}\,  e^{(t-t_1)/\gamma}
	\norm{c(t_1)-\bar c}_{L^2}^2
    \\
    \label{e:gradc}
    &\leq
      \frac{C_{\beta,\mu}}{\gamma^{3}}
      (1 + \norm{\grad u}_{L^\infty})
      (1 + \bar c^4)  (B^4 + 1)\,.
  \end{align}

  The use of~\eqref{e:h2small3}, \eqref{e:3dnonlinear} and~\eqref{e:gradc} in~\eqref{e:t1} yields
  \begin{align}\label{e:c2ts1}
    \nonumber
    \norm{c(2\tau_2^{*})-\bar c}_{L^2}
      &\leq \frac{\sqrt{\beta} B}{2}
	+(1+\bar c^2)\paren[\Big]{1+\frac{C_{\beta,\mu}}{\gamma^{3/2}}(1+\norm{\grad u}_{L^\infty})^{1/2}(1+\bar c^2)(1+B^2)}
    \\
    \nonumber
      &\qquad \cdot \paren[\Big]{\frac{C_{\beta,\mu}}{\gamma^{2}}B^2+\frac{C_{\beta,\mu}}{\gamma}B}\tau_2^*
    \\
    &\leq \frac{\sqrt{\beta} B}{2}
	+ \frac{C_{\beta, \mu}\tau^*_2}{\gamma^{7/2}}
	   (1 + \norm{\grad u}_{L^\infty})^{1/2}
	   (1 + \bar c^4) (1 + B^3) B\,.
  \end{align}
  Thus if we choose
  \begin{equation}\label{e:T1prime}
    T_1' \defeq
      \min\set[\Big]{
	\paren[\Big]{\frac{3}{4} - \frac{\sqrt{\beta}}{2}}
	\frac{\gamma^{7/2}}{(1 + B^3) (1 + \bar c^4) C_{\beta,\mu} }\,,\;
	\frac{\gamma \ln \beta}{2}\,,\;
	\frac{1}{8 \mu} }\,,
  \end{equation}
  then our assumption~\eqref{e:t2small3} and the bound~\eqref{e:c2ts1} imply~\eqref{e:3dexp} as claimed.
  Note that, since we have previously assumed $\beta \leq 2$, the choice of $T_1'$ will be strictly positive.
  Finally, the fact that $T_1'$ is decreasing in~$\norm{c_0 - \bar c}_{L^2}$ follows directly from~\eqref{e:T1prime}.
\end{proof}

\section{The dissipation time of mixing flows}\label{s:dtimeMix}

In this section we prove Proposition~\ref{p:dtimeMix}.
Since working on closed Riemannian manifolds introduces almost no added complexity, we will prove Proposition~\ref{p:dtimeMix} in this setting.
Let~$M$ be a $d$-dimensional, smooth, closed Riemannian manifold, with metric normalized so that~$\vol(M) = 1$.
Let $\lap$ denote the Laplace--Beltrami operator on~$M$, and~$u \in L^\infty( [0, \infty); W^{1, \infty}(M) )$ be a divergence-free vector field.
We begin by recalling the definition of \emph{weakly mixing} and \emph{strongly mixing} that we use.

\begin{definition}\label{d:mixing}
  Let $h \colon [0, \infty) \to (0, \infty)$ be a continuous decreasing function that vanishes at $\infty$.
  Given~$\phi_0 \in \dot L^2(M)$, let $\phi$ denote the solution of
  \begin{equation}\label{e:transport}
    \partial_t \phi + u(t,x) \cdot \grad \phi = 0\,,
  \end{equation}
  on~$M$, with initial data~$\phi_0$.
  \begin{enumerate}
    \item
      We say $u$ is \emph{weakly mixing} with rate function~$h$ if for every~$\phi_0, \psi \in \dot H^1(M)$ and every $s, T \geq 0$ we have
      \begin{equation*}
	\paren[\Big]{
	  \frac{1}{T} \int_0^T \abs[\big]{ \ip{\phi(s + t), \psi} }^2 \, dt
	}^{1/2}
	  \leq h(T) \norm{\phi(s)}_{H^1} \norm{\psi}_{H^1}\,.
      \end{equation*}

    \item
      We say $u$ is \emph{strongly mixing} with rate function~$h$ if for every~$\phi_0, \psi \in \dot H^1(M)$ and every $s, t \geq 0$ we have
      \begin{equation*}
	\abs[\big]{ \ip{\phi(s + t), \psi} }
	  \leq h(t) \norm{\phi(s)}_{H^1} \norm{\psi}_{H^1}\,.
      \end{equation*}

  \end{enumerate}
\end{definition}

The use of $H^1$ norms in Definition~\ref{d:mixing} is purely for convenience, and is motivated by~\cites{LinThiffeaultEA11,Thiffeault12,FengIyer19}.
The traditional choice in the dynamical systems literature is to use $C^1$ norms instead.
This difference, however, is not significant as varying the norms used in Definition~\ref{d:mixing} only changes the mixing rate function (see for instance Appendix~A in~\cite{FengIyer19}).

In~\cite{FengIyer19,Feng19} the authors estimated the dissipation time $\tau^*_1(u, \gamma)$ in terms of the weak (or strong) mixing rate function~$h$.
With minor modifications, their work can be modified to give the following estimate for~$\tau^*_2$.

\begin{theorem}\label{t:strong}
   Let $u \in L^\infty( [0, \infty); C^{2}(M) )$ be a divergence-free vector field, and $h \colon [0, \infty) \to (0, \infty)$ be a continuous decreasing function that vanishes at $\infty$.
  \begin{enumerate}\reqnomode
   \item
      There exists constants $C_1, C_2 > 0$ such that if $u$ is weakly mixing with rate function $h$, then for all sufficiently small~$\gamma$ we have
      \begin{equation}\label{e:tau2weak}
	\tau^*_2(u, \gamma)
	  \leq t_* + C_1 \norm{u}_{C^2}\, t_*^2\,.
      \end{equation}
      Here~$t^*$ is the unique solution of
      \begin{equation}\label{e:tau0weak}
	\gamma \norm{u}_{C^2}\, t_*^2 = C_2\, (h(\sfrac{t_*}{\sqrt{2}}))^{8 / (4 + d)}.
      \end{equation}
    \item
      There exists constants $C_1, C_2 > 0$ such that if $u$ is strongly mixing with rate function $h$, then for all sufficiently small~$\gamma$, we have~\eqref{e:tau2weak},
      where~$t_*$ is the unique solution of
      \begin{equation}\label{e:tau0strong}
	\gamma \norm{u}_{C^2}\, t_*^2 = C_2\, h^2(\sfrac{t_*}{2\sqrt{2}})\,.
      \end{equation}
  \end{enumerate}
\end{theorem}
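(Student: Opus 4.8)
The plan is to adapt the single-window contraction argument of \cite{FengIyer19,Feng19} from the Laplacian to the bi-Laplacian. Fix mean-zero initial data $f$ with $\norm{f}_{L^2}=1$, let $\theta(t)=\mathcal{S}^{u,2}_{0,t}f$, and recall the energy identity $\norm{\theta(t)}_{L^2}^2 = 1 - 2\gamma\int_0^t\norm{\lap\theta}_{L^2}^2\,ds$. Since the mixing hypotheses hold starting from every time $s$, it suffices to bound $\norm{\theta(\tau)}_{L^2}$ for a window length $\tau$ that I will eventually take to be $t_* + C_1\norm{u}_{C^2}t_*^2$; the resulting estimate is uniform in the initial time, which gives the $\sup_s$ in Definition~\ref{d:dissipationTime}. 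The argument splits on the total dissipation over the window. If $\gamma\int_0^\tau\norm{\lap\theta}_{L^2}^2\,ds$ exceeds a fixed fraction of $1$, the energy identity alone forces $\norm{\theta(\tau)}_{L^2}\le\tfrac12$ and there is nothing more to do. The interesting regime is when this integral is small, which is exactly when diffusion by itself is ineffective and the mixing of $u$ must be exploited.

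In the small-dissipation regime I would first extract a good intermediate time. Combining $\int_0^\tau\norm{\lap\theta}_{L^2}^2\,ds \lesssim 1/\gamma$ with the interpolation $\norm{\theta}_{H^1}^2\le\norm{\theta}_{L^2}\norm{\lap\theta}_{L^2}$ and $\norm{\theta}_{L^2}\le1$, a Chebyshev argument produces a time $t_0$ in the first half of the window at which $\norm{\theta(t_0)}_{H^1}$ is controlled quantitatively in terms of $\gamma$ and $\tau$. This step is the analogue of the choice of $t_1$ via~\eqref{e:H2bound} in the proof of Lemma~\ref{l:h1meansmallv23d}, and it is needed because the mixing hypotheses in Definition~\ref{d:mixing} are stated for $H^1$ data, whereas $f$ is merely $L^2$. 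Starting the comparison at $t_0$ rather than $0$ is what lets the mixing estimate see a controlled $H^1$ norm.

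Next I would compare $\theta$ to the pure-transport evolution using Duhamel's formula, $\theta(\tau) = U_{t_0,\tau}\theta(t_0) - \gamma\int_{t_0}^{\tau}U_{s,\tau}\lap^2\theta(s)\,ds$, where $U_{s,t}$ is the solution operator of~\eqref{e:transport}. The leading term is handled by the mixing hypothesis: in the strong case it is bounded in $H^{-1}$ by $h(\tau-t_0)\norm{\theta(t_0)}_{H^1}$, while in the weak case one instead controls the time average $\tfrac1{\tau}\int\norm{U_{t_0,t}\theta(t_0)}_{H^{-1}}^2\,dt$. The remainder is an error term that I would bound using the (small) dissipation together with the growth of the transport operators $U_{s,t}$ on Sobolev norms over the window, its total contribution being of order $\gamma\norm{u}_{C^2}\tau^2$; the factor $\norm{u}_{C^2}$ enters precisely through this distortion. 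Converting the resulting smallness of $\norm{\theta(\tau)}_{H^{-1}}$ into smallness of $\norm{\theta(\tau)}_{L^2}$ is then done by the interpolation $\norm{\theta}_{L^2}\le C\norm{\theta}_{H^{-1}}^{2/3}\norm{\lap\theta}_{L^2}^{1/3}$, the high-frequency factor again being furnished by the dissipation. Balancing the mixing gain against the $\gamma\norm{u}_{C^2}\tau^2$ error is precisely the defining relation~\eqref{e:tau0strong} for $t_*$, and taking $\tau = t_* + C_1\norm{u}_{C^2}t_*^2$ then yields the halving of the $L^2$ norm; the restriction to small $\gamma$ guarantees that the solution $t_*$ of the balance equation lies in the regime where these interpolations close.

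The weak-mixing case requires one extra layer. There the hypothesis only controls a time average of $\norm{\cdot}_{H^{-1}}^2$, so I would use the monotonicity $\norm{\theta(\tau)}_{L^2}^2\le\tfrac1{\tau}\int_0^{\tau}\norm{\theta(t)}_{L^2}^2\,dt$ together with the pointwise interpolation and Hölder in time. Threading the weak-mixing $H^{-1}$ average through this time average and through the dimension-dependent Gagliardo--Nirenberg exponents (of the same type as the interpolation $\norm{c-\bar c}_{L^\infty}\le C\norm{\lap c}_{L^2}^{d/4}\norm{c-\bar c}_{L^2}^{1-d/4}$ used earlier) is what produces the exponent $8/(4+d)$ in~\eqref{e:tau0weak}, in contrast to the clean $h^2$ of the strong case. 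I expect the main technical obstacle to be the Duhamel remainder estimate: one must show that the diffusive perturbation distorts the mixing picture by no more than $O(\gamma\norm{u}_{C^2}\tau^2)$, which forces careful bookkeeping of which negative Sobolev norm to pair against $\lap^2\theta$ and of how $U_{s,t}$ acts on the relevant norms, and it is the sole source of the $\norm{u}_{C^2}$-dependent correction in~\eqref{e:tau2weak}.
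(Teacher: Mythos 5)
Your skeleton matches the paper's: both arguments split on whether the dissipation $\gamma\int\norm{\lap\theta}_{L^2}^2\,ds$ over a window is large or small, both compare $\theta$ to the free-transport solution in the small-dissipation regime, and both close by balancing the mixing gain against a $\gamma\norm{u}_{C^2}$-weighted error, which is where \eqref{e:tau0weak} and \eqref{e:tau0strong} come from. However, there are two genuine gaps. The more serious one is in your weak-mixing case: you propose to control the time average $\tfrac1\tau\int\norm{U_{t_0,t}\theta(t_0)}_{H^{-1}}^2\,dt$ directly from Definition~\ref{d:mixing}(1). This is not possible. The weak-mixing hypothesis is stated \emph{per test function}, and expanding $\norm{\phi}_{H^{-1}}^2=\sum_j\lambda_j^{-1}\abs{\ip{\phi,e_j}}^2$ over the eigenbasis and applying the hypothesis to each $e_j$ gives a bound of the form $h^2\norm{\phi_0}_{H^1}^2\sum_j\lambda_j^{-1}\norm{e_j}_{H^1}^2\sim h^2\norm{\phi_0}_{H^1}^2\sum_j 1$, which diverges. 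What the paper does instead (Lemma~\ref{l:H2smallweak}) is truncate in frequency: it controls only $\norm{P_N\phi}_{L^2}^2$, where $P_N$ projects onto the first $N$ eigenmodes, and absorbs the high-frequency remainder into the dissipation via a contradiction argument. The number of modes below the cutoff is then counted by Weyl's lemma ($N\approx\lambda_N^{d/2}$), so that $\sum_{j\le N}\lambda_j\lesssim\lambda_N^{(d+2)/2}$; combined with $\norm{\phi_0}_{H^1}^2\le\lambda_N\norm{\phi_0}_{L^2}^2$ (which comes from the dichotomy hypothesis~\eqref{e:H1ByL2SmallCts}, not from a Chebyshev time selection) this produces the $\lambda_N^{(d+4)/2}$ in~\eqref{e:tmpCts6}, hence $\lambda_N\sim h^{-4/(d+4)}$ and the exponent $8/(4+d)$ in~\eqref{e:tau0weak}. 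This eigenvalue counting -- not Gagliardo--Nirenberg interpolation, as you suggest -- is the source of the dimension-dependent exponent; without the spectral truncation and Weyl's law your weak-mixing argument does not close.

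The second gap is in your Duhamel remainder estimate. You plan to bound $\gamma\int_{t_0}^{\tau}U_{s,\tau}\lap^2\theta(s)\,ds$ by pairing it in a negative Sobolev norm and invoking ``the growth of the transport operators $U_{s,t}$ on Sobolev norms.'' But transport operators grow \emph{exponentially} on every $H^s$ with $s\neq0$, e.g.\ $\norm{U_{s,t}}_{H^1\to H^1}\le e^{C\norm{u}_{C^1}(t-s)}$, and dually on $H^{-1}$, $H^{-2}$. Since the window length is $\tau\sim t_*\to\infty$ as $\gamma\to0$, any such pairing introduces factors $e^{C\norm{u}_{C^1}t_*}$, which destroys the claimed $O(\gamma\norm{u}_{C^2}\tau^2)$ error and cannot be balanced in~\eqref{e:tau0weak} or~\eqref{e:tau0strong}. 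The paper avoids this by never leaving $L^2$, where the transport flow is an isometry: Lemma~\ref{l:l2diffcts} bounds $\norm{\theta-\phi}_{L^2}^2$ by a direct energy computation, $\norm{\theta-\phi}_{L^2}^2\le2\gamma\norm{\theta_0}_{L^2}\int\norm{\lap^2\theta}_{L^2}\,ds$, and then controls $\int\norm{\lap^2\theta}_{L^2}^2\,ds$ by a second energy estimate (multiplying~\eqref{e:ad} by $\lap^2\theta$), in which $\norm{u}_{C^2}$ enters \emph{polynomially} through integration by parts of $\ip{u\cdot\grad\theta,\lap^2\theta}$. A related misattribution: the correction term $C_1\norm{u}_{C^2}t_*^2$ in~\eqref{e:tau2weak} is not produced by the comparison error at all; it is the energy-decay time $8\ln2/(\gamma\lambda_N^2)$ from~\eqref{e:tmpCts5}, rewritten via the balance relation defining $t_*$. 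If you reroute your remainder through the paper's $L^2$ comparison and replace the $H^{-1}$-interpolation step by the projection dichotomy (either $\norm{\lap\theta}_{L^2}^2\ge\lambda_N^2\norm{\theta}_{L^2}^2$, giving decay by Lemma~\ref{l:H2large}, or else~\eqref{e:H1ByL2SmallCts} holds, which also supplies the $H^1$ control you sought by Chebyshev), the strong-mixing half of your argument becomes essentially the paper's proof.
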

The proof of Theorem~\ref{t:strong} is very similar to that in~\cite[Chapter 4]{Feng19}, and we provide a sketch in Appendix~\ref{s:dtimebound}.
We now prove Proposition~\ref{p:dtimeMix} using Theorem~\ref{t:strong}.

\begin{proof}[Proof of Proposition~\ref{p:dtimeMix}]
  Rescaling time by a factor of~$A$ we immediately see that
  \begin{equation}\label{e:tau2A}
    \tau^*_2( u_A, \gamma ) = \frac{1}{A}\, \tau^*_2\paren[\Big]{ v, \frac{\gamma}{A} }\,.
  \end{equation}
  For the first assertion in Proposition~\ref{p:dtimeMix}, we assume~$v$ is weakly mixing with rate function~$h$.
  Using~\eqref{e:tau2weak} and~\eqref{e:tau2A} we see that
  \begin{equation}\label{e:tau2rescaled}
    \tau^*_2(u_A, \gamma) \leq
      \frac{1}{A} \paren[\big]{
	t_*(A) + C_1 \norm{v}_{C^2}\, t_*^2(A) } \,,
  \end{equation}
  where $t_*(A)$  solves
  \begin{equation}\label{e:tau0Aweak}
	\frac{\gamma}{A}\, \norm{v}_{C^2}\, t_*^2(A) = C_2\, (h(\tfrac{t_*(A)}{\sqrt{2}}))^{8 / (4 + d)}\,.
  \end{equation}
  Clearly this implies $t_*(A) \to \infty$ as $A \to \infty$.
  Since $h$ vanishes at~$\infty$, this in turn implies that $t_*^2(A) / A \to 0$ as $A \to \infty$.
  Consequently, the right hand side of~\eqref{e:tau2rescaled} vanishes as $A \to \infty$, proving the first assertion of Proposition~\ref{p:dtimeMix}.
  \smallskip

  For the second assertion, we assume $v$ is strongly mixing with rate function~$h$ satisfying~\eqref{e:thtVanish}.
  In this case Theorem~\ref{t:strong} and~\eqref{e:tau2A} imply~\eqref{e:tau2rescaled} still holds, provided $t_*(A)$ is defined by
  \begin{equation}\label{e:tau0Astrong}
	\frac{\gamma}{A} \norm{v}_{C^2}\, t_*^2(A) = C_2\, h^2\paren[\Big]{ \frac{t_*(A)}{2\sqrt{2}} }\,.
  \end{equation}
  Note that this still implies~$t_*(A) \to \infty$ as $A \to \infty$.
  Using this along with~\eqref{e:thtVanish} we see that
  \begin{equation*}
    \frac{t_*^2(A)}{A} \leq \frac{\epsilon}{t_*^2(A)}
  \end{equation*}
  for any~$\epsilon>0$, and all sufficiently large~$A$.
  Using this in~\eqref{e:tau2rescaled} yields $A^{1/2} \tau^*_2(u_A, \gamma) \to 0$ as $A \to \infty$, concluding the proof.
\end{proof}

\section{Relationship between \texorpdfstring{$\tau^*_1$}{tau1*} and \texorpdfstring{$\tau^*_2$}{tau2*} (Lemma~\ref{l:tau1tau2})}\label{s:relationbetweentau}

In this section we prove Lemma~\ref{l:tau1tau2} bounding $\tau^*_2(u, \gamma)$ in terms of  $\tau^*_1(u, \gamma)$.
Throughout we fix $u \in L^\infty( [0, \infty); C^2(\T^d) )$, and assume~$\theta$ is a solution of~\eqref{e:ad} with~$\alpha = 2$ and mean-zero initial data~$\theta_0 \in \dot L^2(\T^d)$.
As before, we abbreviate $\tau^*_\alpha( u, \gamma )$ to $\tau^*_\alpha$.

The proof of Lemma~\ref{l:tau1tau2} is similar to that of Theorem~\ref{t:chmix} in 3D.
We divide the analysis into two cases: the first where the time average of $\norm{\lap \theta}_{L^2}^2$ is large (Lemma~\ref{l:comlarge}), and the second where the time average of~$\norm{\lap \theta}_{L^2}^2$ is small (Lemma~\ref{l:comsmall}).  Lemma~\ref{l:tau1tau2} will be proven after these two lemmas.

\begin{lemma}\label{l:comlarge}
  If for some $t_0 \geq 0$, $\lambda, \tau > 0$ we have
  \begin{equation}\label{e:h2big}
    \frac{1}{\tau}\int_{t_0}^{t_0+\tau}\norm{\lap \theta}_{L^2}^2\,ds
      \geq \lambda  \norm{\theta(t_0)}_{L^2}^2\,,
  \end{equation}
  then
  \begin{equation}\label{e:L2eDecay2}
    \norm{\theta (t_0+\tau)}_{L^2}
      \leq e^{-\lambda\gamma \tau}\norm{\theta(t_0)}_{L^2} \,.
  \end{equation}
\end{lemma}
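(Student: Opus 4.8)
The plan is to establish \eqref{e:L2eDecay2} through a direct energy estimate, exactly paralleling the large-average argument in the proof of Lemma~\ref{l:h1meanlargev2}. Since the evolution here is linear and contains no Cahn--Hilliard nonlinearity, the computation is in fact cleaner: it produces an exact differential identity rather than an inequality. Without loss of generality I would take $t_0 = 0$.

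First I would multiply \eqref{e:ad} (with $\alpha = 2$, so that $(-\lap)^2 = \lap^2$) by $\theta$ and integrate over $\T^d$. The structural point is that incompressibility of $u$ forces the transport term to drop out, $\ip{u \cdot \grad \theta, \theta} = 0$, while two integrations by parts give $\ip{\lap^2 \theta, \theta} = \norm{\lap \theta}_{L^2}^2$. This yields the identity
\begin{equation*}
  \partial_t \norm{\theta}_{L^2}^2 = -2\gamma \norm{\lap \theta}_{L^2}^2\,.
\end{equation*}

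Next I would integrate this identity in time from $0$ to $\tau$ and insert the hypothesis \eqref{e:h2big}, which bounds the resulting integral below by $\lambda \tau \norm{\theta(0)}_{L^2}^2$. This gives
\begin{equation*}
  \norm{\theta(\tau)}_{L^2}^2
    = \norm{\theta(0)}_{L^2}^2 - 2\gamma \int_0^\tau \norm{\lap\theta}_{L^2}^2\,ds
    \leq (1 - 2\lambda\gamma\tau)\,\norm{\theta(0)}_{L^2}^2\,.
\end{equation*}
Finally, applying the elementary inequality $1 - x \leq e^{-x}$ with $x = 2\lambda\gamma\tau$ and taking square roots yields \eqref{e:L2eDecay2}.

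There is no genuine obstacle here; the only bookkeeping to watch is the factor of two, which enters because the identity is for $\norm{\theta}_{L^2}^2$ and produces $e^{-2\lambda\gamma\tau}$, whose square root restores the stated exponent $e^{-\lambda\gamma\tau}$ after passing back to the $L^2$ norm. The essential difference from Lemma~\ref{l:h1meanlargev2} is that there is no nonlinear term to absorb via Young's inequality, so the estimate is an equality up to the single application of the hypothesis \eqref{e:h2big}.
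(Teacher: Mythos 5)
Your proposal is correct and follows exactly the paper's own argument: multiply \eqref{e:ad} by $\theta$, use incompressibility to kill the transport term, integrate the resulting energy identity $\partial_t\norm{\theta}_{L^2}^2 = -2\gamma\norm{\lap\theta}_{L^2}^2$ in time, insert the hypothesis \eqref{e:h2big}, and finish with $1-x\leq e^{-x}$ and a square root. The paper's proof is just a terser version of the same computation.
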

\begin{proof}
  Multiplying~\eqref{e:ad} by~$\theta$ and integrating, we obtain
  \begin{align*}
    \norm{\theta(t_0+ \tau)}_{L^2}^2
      =\norm{\theta(t_0)}_{L^2}^2-2\gamma\int_{t_0}^{t_0+ \tau}\norm{\lap \theta}_{L^2}^2\,ds\,.
  \end{align*}
  Inequalities~\eqref{e:h2big} and $1 - x \leq e^{-x}$ yield~\eqref{e:L2eDecay2} as desired.
\end{proof}

\begin{lemma}\label{l:comsmall}
  There exists an explicit dimensional constant~$C_1$ such that if
  \begin{equation*}
    \lambda \defeq \frac{1}{4 \gamma \tau^*_1 (20C_1 \norm{u}_{C^2}\, \tau^*_1 + 11)}\,,
  \end{equation*}
  and for some $t_0 \geq 0$ we have
  \begin{equation}\label{e:thetaH2small}
    \frac{1}{2\tau^*_1} \int_{t_0}^{t_0+ 2 \tau^*_1}
      \norm{\lap \theta}_{L^2}^2\,ds
      \leq \lambda  \norm{\theta(t_0)}_{L^2}^2\,,
  \end{equation}
  then~\eqref{e:L2eDecay2} still holds at time~$\tau = 2\tau^*_1$.
\end{lemma}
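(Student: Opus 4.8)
The plan is to mirror the three-dimensional Cahn--Hilliard argument of Lemma~\ref{l:h1meansmallv23d}, but using the advection--diffusion semigroup $\mathcal S^{u,1}$ (rather than $\mathcal S^{u,2}$) in order to exploit smallness of $\tau^*_1$. Writing \eqref{e:ad} with $\alpha = 2$ in the form
\[
  \partial_t\theta + u\cdot\grad\theta - \gamma\lap\theta
    = -\gamma(\lap^2 + \lap)\theta =: f,
\]
Duhamel's principle gives, for any $t_1 \in [t_0, t_0 + \tau^*_1]$,
\[
  \theta(t_0 + 2\tau^*_1)
    = \mathcal S^{u,1}_{t_1,\, t_0 + 2\tau^*_1}\theta(t_1)
      + \int_{t_1}^{t_0+2\tau^*_1}
	  \mathcal S^{u,1}_{s,\, t_0+2\tau^*_1} f(s)\,ds\,.
\]
Since $t_0 + 2\tau^*_1 - t_1 \geq \tau^*_1$, the definition of $\tau^*_1$ (Definition~\ref{d:dissipationTime}) together with the fact that $\mathcal S^{u,1}$ is an $L^2$-contraction bounds the first term by $\tfrac12\norm{\theta(t_1)}_{L^2}$; and as $\norm{\theta(\cdot)}_{L^2}$ is non-increasing (the energy identity in the proof of Lemma~\ref{l:comlarge}) this is at most $\tfrac12\norm{\theta(t_0)}_{L^2}$. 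First I would use Chebyshev's inequality with the smallness hypothesis~\eqref{e:thetaH2small} to pick $t_1$ in the first half of the interval so that $\norm{\lap\theta(t_1)}_{L^2}^2 \leq 2\lambda\norm{\theta(t_0)}_{L^2}^2$. It then remains to control the Duhamel integral, whose integrand satisfies $\norm{f}_{L^2} \leq \gamma\norm{\lap^2\theta}_{L^2} + \gamma\norm{\lap\theta}_{L^2}$.

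The $\norm{\lap\theta}_{L^2}$ contribution is handled directly by~\eqref{e:thetaH2small} after Cauchy--Schwarz in time. The genuinely new ingredient --- the analogue of the free-energy estimate of Lemma~\ref{l:H1bound}, one derivative higher --- is an a priori bound on the top-order term $\gamma\int\norm{\lap^2\theta}_{L^2}^2$. Testing \eqref{e:ad} against $\lap^2\theta$ yields
\[
  \tfrac12\partial_t\norm{\lap\theta}_{L^2}^2 + \gamma\norm{\lap^2\theta}_{L^2}^2
    = -\ip{\lap\theta,\, \lap(u\cdot\grad\theta)}\,.
\]
Expanding $\lap(u\cdot\grad\theta) = u\cdot\grad\lap\theta + (\lap u_i)\partial_i\theta + 2\,\partial_j u_i\,\partial_i\partial_j\theta$ (summation over repeated indices), the top-order transport term drops out because $\ip{\lap\theta,\, u\cdot\grad\lap\theta} = 0$ by incompressibility. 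Using the torus identity $\norm{D^2\theta}_{L^2} = \norm{\lap\theta}_{L^2}$ for the full Hessian and the Poincar\'e bound $\norm{\grad\theta}_{L^2} \leq C\norm{\lap\theta}_{L^2}$, the remaining two terms give $\abs{\ip{\lap\theta,\, \lap(u\cdot\grad\theta)}} \leq C_1\norm{u}_{C^2}\norm{\lap\theta}_{L^2}^2$ for an explicit dimensional $C_1$ (this is exactly the $C_1$ appearing in $\lambda$). Integrating over $[t_1, t_0+2\tau^*_1]$ and discarding the nonnegative endpoint term yields
\[
  \gamma\int_{t_1}^{t_0+2\tau^*_1}\norm{\lap^2\theta}_{L^2}^2\,ds
    \leq \tfrac12\norm{\lap\theta(t_1)}_{L^2}^2
      + C_1\norm{u}_{C^2}\int_{t_1}^{t_0+2\tau^*_1}\norm{\lap\theta}_{L^2}^2\,ds\,,
\]
and both terms on the right are controlled by the choice of $t_1$ and~\eqref{e:thetaH2small}, producing a bound proportional to $\lambda\paren{1 + 2C_1\norm{u}_{C^2}\tau^*_1}\norm{\theta(t_0)}_{L^2}^2$.

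Finally I would assemble the pieces. Applying Cauchy--Schwarz in time to the Duhamel integral (the interval has length at most $2\tau^*_1$) and substituting the two bounds gives, for explicit constants $c_1, c_2$,
\[
  \norm{\theta(t_0+2\tau^*_1)}_{L^2}
    \leq \Bigl(\tfrac12
      + c_1\sqrt{\gamma\tau^*_1}\,
	\sqrt{\lambda\paren{1 + 2C_1\norm{u}_{C^2}\tau^*_1}}
      + c_2\,\gamma\tau^*_1\sqrt\lambda\Bigr)\norm{\theta(t_0)}_{L^2}\,.
\]
The key point making the right-hand side a bounded pure number is that $\gamma\tau^*_1 = O(1)$, which follows from the elementary bound~\eqref{e:dtime1} with $\alpha = 1$. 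Substituting the prescribed $\lambda$ gives $2\gamma\lambda\tau^*_1 = 1/\bigl(2(20C_1\norm{u}_{C^2}\tau^*_1 + 11)\bigr)$, and the coefficients $20$ and $11$ are tuned precisely so that the bracket above is at most $e^{-2\gamma\lambda\tau^*_1}$ uniformly in $\norm{u}_{C^2}\tau^*_1 \geq 0$ (one checks the bracket stays below $\tfrac34$ while $e^{-2\gamma\lambda\tau^*_1} \geq 1 - \tfrac{1}{22}$); this is exactly~\eqref{e:L2eDecay2} at $\tau = 2\tau^*_1$. The main obstacle is the a priori control of $\lap^2\theta$: unlike the Cahn--Hilliard source, the source $f$ here is two derivatives rougher than the quantity $\norm{\lap\theta}_{L^2}$ appearing in the hypothesis, and it is only the favorable cancellation of the top-order transport term by incompressibility that lets the hyperdiffusive dissipation absorb it --- at the unavoidable cost of the $\norm{u}_{C^2}$ factor entering $\lambda$.
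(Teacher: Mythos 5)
Your proposal is correct and follows essentially the same route as the paper's proof: Duhamel's formula around the advection--diffusion semigroup $\mathcal S^{u,1}$ with a Chebyshev-chosen starting time $t_1 \in [t_0, t_0+\tau^*_1]$, the higher-order energy estimate obtained by testing against $\lap^2\theta$ (whose incompressibility cancellation produces the constant $C_1 \norm{u}_{C^2}$ appearing in $\lambda$), Cauchy--Schwarz in time, and the same final arithmetic with the prescribed $\lambda$. The only cosmetic difference is that the paper absorbs the lower-order forcing $\gamma\lap\theta$ into $\gamma\lap^2\theta$ via Poincar\'e's inequality, whereas you estimate it separately using $\gamma\tau^*_1 \leq \ln 2/(2\pi)^2$ from~\eqref{e:dtime1}; both yield the desired contraction with room to spare.
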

\begin{proof}
  Without loss of generality assume~$t_0 = 0$.
  By Chebyshev's inequality, there exists $t_1 \in [0, \tau^*_1]$ such that
  \begin{equation}\label{e:thetaL21}
    \norm{\lap \theta(t_1)}_{L^2}^2 \leq 2 \lambda \norm{\theta_0}_{L^2}^2\,.
  \end{equation}
  Since
  \begin{equation*}
    \partial_t \theta + u \cdot \grad \theta - \gamma \lap \theta
      = -\gamma \lap^2 \theta - \gamma \lap \theta\,,
  \end{equation*}
  Duhamel's principle implies
  \begin{equation*}
    \theta(2\tau^*_1)
      = \mathcal S^{u, 1}_{t_1, 2\tau^*_1} \theta(t_1)
	-\gamma  \int_{t_1}^{2 \tau^*_1}
	  \mathcal S^{u, 1}_{s, 2\tau^*_1} (\lap^2 \theta(s) + \lap \theta(s) ) \, ds\,,
  \end{equation*}
  where~$\mathcal S$ is the solution operator from Definition~\ref{d:dissipationTime}.
  Since $2 \tau^*_1 - t_1 \geq \tau^*_1$, and $\mathcal S$ is an~$L^2$ contraction, then Poincar\'e's inequality gives
  \begin{align}\label{e:theta2tau1}
    \norm{\theta(2 \tau^*_1)}_{L^2}
      &\leq \frac{\norm{\theta_0}_{L^2}}{2}
	  + 2 \gamma \int_{t_1}^{2 \tau^*_1} \norm{\lap^2 \theta}_{L^2} \, ds\,.
  \end{align}

  To estimate the second term on the right, we multiply~\eqref{e:ad} with $\alpha=2$ by $\lap^2 \theta$ and integrate in space to obtain
  \begin{equation*}
    \tfrac{1}{2} \partial_t \norm{\lap \theta}_{L^2}^2
	+ \gamma \norm{\lap^2 \theta}_{L^2}^2
      \leq C_1 \norm{u}_{C^2}\, \norm{\lap \theta}_{L^2}^2\,,
  \end{equation*}
  for some explicit dimensional constant $C_1$.
  Integration in time together with~\eqref{e:thetaH2small} and~\eqref{e:thetaL21} yields
  \begin{equation*}
    2 \gamma \int_{t_1}^{2\tau^*_1} \norm{\lap^2 \theta}_{L^2}^2 \, ds
      \leq \lambda (4C_1 \norm{u}_{C^2}\, \tau^*_1 + 2) \norm{\theta_0}_{L^2}^2 \,.
  \end{equation*}
  Using this in~\eqref{e:theta2tau1} we have
  \begin{equation*}
    \norm{\theta(2 \tau^*_1)}_{L^2}
      \leq \paren[\Big]{
	  \tfrac{1}{2}
	  + 2\paren[\big]{
	      \gamma \tau^*_1 \lambda (2 + 4C_1 \norm{u}_{C^2}\, \tau^*_1)
	    }^{1/2}
	  } \norm{\theta_0}_{L^2}\,.
  \end{equation*}
  By our choice of~$\lambda$ this implies
  \begin{equation*}
    \norm{\theta(2 \tau^*_1)}_{L^2} \leq (1 - 2 \lambda \gamma \tau^*_1) \norm{\theta_0}_{L^2} \leq e^{-2\lambda \gamma \tau^*_1} \norm{\theta_0}_{L^2}\,,
  \end{equation*}
  finishing the proof.
\end{proof}

The proof of Lemma~\ref{l:tau1tau2} follows quickly from Lemmas~\ref{l:comlarge} and~\ref{l:comsmall}.

\begin{proof}[Proof of Lemma~\ref{l:tau1tau2}]
  Iterating Lemmas~\ref{l:comlarge} and~\ref{l:comsmall} repeatedly we see that for any $t_0 \geq 0$ and $n \in \N$ we have
  \begin{equation*}
    \norm{\theta(t_0 + 2n \tau^*_1 )}_{L^2} \leq e^{-2 n\lambda \gamma \tau^*_1 } \norm{\theta(t_0)}_{L^2}\,.
  \end{equation*}
  Thus we must have~$\tau^*_2 \leq (\ln 2) / (\lambda \gamma)$, from which~\eqref{e:tau2tau1} follows.
\end{proof}

\appendix
\section{Dissipation time bounds of mixing vector fields}\label{s:dtimebound}
In this section, we prove Theorem~\ref{t:strong}.
As in Section~\ref{s:dtimeMix}, we assume here that $M$ is a smooth, closed, Riemannian manifold with volume~$1$, and $\Delta$ is the Laplace--Beltrami operator on~$M$.
We also fix a divergence free vector field~$u \in L^\infty( [0, \infty); C^{2}(M) )$, and let~$\theta$ be the solution to the advection hyper-diffusion equation~\eqref{e:ad} with~$\alpha = 2$ on the manifold~$M$, with mean-zero initial data~$\theta_0 \in \dot L^2(M)$.

The idea behind the proof of Theorem~\ref{t:strong} is to divide the analysis into two cases.
When~$\norm{\lap \theta}_{L^2} / \norm{\theta}_{L^2}$ is large, the energy inequality implies~$\norm{\theta}_{L^2}$ decays rapidly.
On the other hand, when~$\norm{\lap \theta}_{L^2} / \norm{\theta}_{L^2}$ is small, we use the mixing assumption on~$u$ to show that~$\norm{\theta}_{L^2}$ still decays rapidly.
The outline of the proof is the same as that of Theorem~\ref{t:chmix}; however, the proof of the second case is substantially different.
We begin by stating two lemmas handling each of the above cases.

\begin{lemma}\label{l:H2large}
The solution~$\theta$ satisfies the energy inequality
\begin{align}\label{e:energy}
  \partial_t\norm{\theta}_{L^2}^2=-2\gamma \norm{\lap \theta}_{L^2}^2\,.
\end{align}
Consequently, if for some $c_0>0$ we have
\begin{align*}
\norm{\lap \theta(t)}_{L^2}^2 \geq c_0\norm{\theta(t)}_{L^2}^2\,, \quad \text{ for all } 0\leq t \leq t_0\,,
\end{align*}
then
\begin{align}\label{e:H2Large}
\norm{\theta(t)}_{L^2}^2\leq e^{-2\gamma c_0 t}\norm{\theta_0}_{L^2}^2\,, \quad \text{ for all } 0\leq t \leq t_0\,.
\end{align}
\end{lemma}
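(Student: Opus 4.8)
The plan is to first establish the energy identity~\eqref{e:energy} by a standard $L^2$ energy estimate, and then deduce the decay~\eqref{e:H2Large} from a differential inequality via Gr\"onwall's lemma.

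First I would multiply~\eqref{e:ad} (with $\alpha = 2$) by $\theta$ and integrate over $M$. The time-derivative term yields $\tfrac{1}{2}\partial_t \norm{\theta}_{L^2}^2$. The advection term vanishes: since $u$ is divergence-free,
\begin{equation*}
  \int_M \theta\, (u \cdot \grad \theta) \, dx
    = \tfrac{1}{2} \int_M u \cdot \grad(\theta^2)\, dx
    = -\tfrac{1}{2}\int_M (\grad \cdot u)\, \theta^2 \, dx
    = 0\,.
\end{equation*}
For the hyperdiffusion term, integrating by parts twice on the closed manifold $M$ (so that no boundary terms appear) gives $\gamma \int_M \theta\, \lap^2 \theta \, dx = \gamma \norm{\lap \theta}_{L^2}^2$. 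Combining these three contributions produces exactly~\eqref{e:energy}. The regularity guaranteed by the solution space makes all of these integrations by parts legitimate.

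Next, under the hypothesis $\norm{\lap \theta(t)}_{L^2}^2 \geq c_0 \norm{\theta(t)}_{L^2}^2$ for $0 \leq t \leq t_0$, substituting into~\eqref{e:energy} yields the differential inequality
\begin{equation*}
  \partial_t \norm{\theta(t)}_{L^2}^2
    \leq -2 \gamma c_0 \norm{\theta(t)}_{L^2}^2\,, \qquad 0 \leq t \leq t_0\,.
\end{equation*}
Integrating this inequality (equivalently, applying Gr\"onwall's lemma to $\partial_t \ln \norm{\theta}_{L^2}^2 \leq -2\gamma c_0$) then gives~\eqref{e:H2Large} on $[0, t_0]$.

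There is essentially no serious obstacle here; the lemma is a direct energy estimate. The only points requiring a moment of care are confirming that incompressibility annihilates the transport term, and that the biharmonic operator is self-adjoint and nonnegative on the closed manifold, so that testing against $\theta$ produces the favorable sign $+\gamma\norm{\lap\theta}_{L^2}^2$ rather than a term of indefinite sign.
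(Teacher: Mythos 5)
Your proposal is correct and follows essentially the same route as the paper: multiply~\eqref{e:ad} by $\theta$, use incompressibility to kill the advection term and self-adjointness of $\lap^2$ to get the identity~\eqref{e:energy}, then conclude~\eqref{e:H2Large} by Gr\"onwall. The paper's proof is just a terser statement of exactly these steps, so there is nothing to add or correct.
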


\begin{lemma}\label{l:H2smallweak}
  Let $0 < \lambda_1 \leq \lambda_2 \leq\cdots$ be the eigenvalues of the Laplacian, where each eigenvalue is repeated according to its multiplicity.
 Suppose~$u$ is weakly mixing with rate function~$h$. There exists positive, finite dimensional constants $\tilde C$, $\tilde c$ such that for all $\gamma$ sufficiently small the following holds:
  If $\lambda_N$ is an eigenvalue of the Laplace--Beltrami operator such that%
  \footnote{
    When $\gamma$ is sufficiently small such a $\lambda_N$ is guaranteed to exist.}
  \begin{align}\label{e:choicelambdaweak}
    h^{-1}\paren[\Big]{ \frac{1}{\tilde c \lambda_N^{(d + 4) / 4} } }
      \leq \frac{1}{\tilde C \lambda_N \sqrt{\gamma} \norm{u}_{C^2}^{1/2} }\,,
  \end{align}
  and if
  \begin{equation}\label{e:H1ByL2SmallCts}
    \norm{\lap \theta_0}_{L^2}^2
	< \lambda_N^2 \norm{\theta_0}_{L^2}^2
  \end{equation}
  holds, then we have
  \begin{equation}\label{e:thetaT0Ctsweak}
    \norm{ \theta(t_0) }_{L^2}^2
      \leq
	\exp\paren[\Big]{- \frac{\gamma \lambda_N^2 t_0}{4} } \norm{\theta_{0}}_{L^2}^2\,,
  \end{equation}
  at a time $t_0$ given by
  \begin{equation}\label{e:t0weak}
    t_0 \defeq h^{-1}\Big(
      \frac{1}{\tilde c \lambda_N^{(d+4)/4}} \Big) \,.
  \end{equation}
\end{lemma}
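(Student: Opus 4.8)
The plan is to argue by contradiction, using the energy identity to dispose of the ``high-frequency'' part of the solution and the weak mixing hypothesis to dispose of the ``low-frequency'' part. Suppose \eqref{e:thetaT0Ctsweak} fails, so that $\norm{\theta(t_0)}_{L^2}^2 > \exp(-\gamma \lambda_N^2 t_0 / 4) \norm{\theta_0}_{L^2}^2$. Condition~\eqref{e:choicelambdaweak} forces $\gamma \lambda_N^2 t_0$ to be small, so the right-hand side is within a factor close to~$1$ of $\norm{\theta_0}_{L^2}^2$. First I would feed this into the energy identity~\eqref{e:energy} (as in Lemma~\ref{l:H2large}): since $\norm{\theta_0}_{L^2}^2 - \norm{\theta(t_0)}_{L^2}^2 = 2\gamma \int_0^{t_0} \norm{\lap \theta}_{L^2}^2 \, ds$, the assumed lack of decay yields on the one hand $\norm{\theta(s)}_{L^2}^2 \geq \tfrac{1}{2}\norm{\theta_0}_{L^2}^2$ for all $s \in [0, t_0]$ (by monotonicity), and on the other hand the averaged smallness
\begin{equation*}
  \frac{1}{t_0} \int_0^{t_0} \norm{\lap \theta(s)}_{L^2}^2 \, ds
    < \frac{\lambda_N^2}{8} \norm{\theta_0}_{L^2}^2 \,.
\end{equation*}
The rest of the proof aims to contradict the resulting lower bound on the energy.

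Next I would split into frequencies. Let $P$ be the spectral projection of $-\lap$ onto the eigenspaces with eigenvalue $\leq \lambda_N$, and $Q = I - P$. Because $\norm{Q\theta}_{L^2}^2 \leq \lambda_N^{-2} \norm{\lap \theta}_{L^2}^2$, the displayed dissipation bound immediately gives $\frac{1}{t_0}\int_0^{t_0} \norm{Q\theta}_{L^2}^2 \, ds < \tfrac{1}{8}\norm{\theta_0}_{L^2}^2$. Combined with the averaged lower bound $\frac{1}{t_0}\int_0^{t_0}\norm{\theta}_{L^2}^2\,ds \geq \tfrac12\norm{\theta_0}_{L^2}^2$, the low-frequency part must then carry a definite fraction of the energy on average, $\frac{1}{t_0}\int_0^{t_0}\norm{P\theta}_{L^2}^2\,ds > \tfrac{3}{8}\norm{\theta_0}_{L^2}^2$. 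It therefore suffices to show, using mixing, that the low-frequency energy is in fact small on average.

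This is the heart of the argument. Write $\mathcal T_{s,t}$ for the solution operator of the transport equation~\eqref{e:transport}, and let $\{e_k\}$ be the $L^2$-orthonormal eigenfunctions with $-\lap e_k = \lambda_k e_k$. The hypothesis~\eqref{e:H1ByL2SmallCts} that $\theta_0$ is low-frequency dominated gives $\norm{\theta_0}_{H^1}^2 \leq (1+\lambda_N)\norm{\theta_0}_{L^2}^2 \leq C\lambda_N \norm{\theta_0}_{L^2}^2$, and likewise $\norm{e_k}_{H^1}^2 \leq C\lambda_N$ for each mode with $\lambda_k \leq \lambda_N$. Applying the time-averaged weak mixing estimate of Definition~\ref{d:mixing} (with $s = 0$, $T = t_0$, $\phi_0 = \theta_0$, $\psi = e_k$) and summing over the $\leq C\lambda_N^{d/2}$ eigenvalues below $\lambda_N$ (Weyl's law) I would obtain
\begin{equation*}
  \frac{1}{t_0}\int_0^{t_0} \norm{P\mathcal T_{0,t}\theta_0}_{L^2}^2\,dt
    = \sum_{\lambda_k \leq \lambda_N} \frac{1}{t_0}\int_0^{t_0} \abs{\ip{\mathcal T_{0,t}\theta_0, e_k}}^2 \, dt
    \leq C\, h(t_0)^2\, \lambda_N^{(d+4)/2}\, \norm{\theta_0}_{L^2}^2 \,.
\end{equation*}
This is exactly where the exponent $(d+4)/4$ originates: two $H^1$ factors contribute $\lambda_N^2$ and the mode count contributes $\lambda_N^{d/2}$. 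The choice $t_0 = h^{-1}\bigl(1/(\tilde c \lambda_N^{(d+4)/4})\bigr)$ in~\eqref{e:t0weak} makes $h(t_0)^2 \lambda_N^{(d+4)/2} = \tilde c^{-2}$, so that with $\tilde c$ large this average is an arbitrarily small multiple of $\norm{\theta_0}_{L^2}^2$.

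Finally I would transfer this from the pure-transport solution $\mathcal T_{0,t}\theta_0$ to the genuine solution $\theta(t)$ via Duhamel's principle, $\theta(t) = \mathcal T_{0,t}\theta_0 - \gamma\int_0^t \mathcal T_{s,t}\lap^2\theta(s)\,ds$, and this is where I expect the main obstacle. Unlike the advection--diffusion case ($\alpha = 1$), the hyperdiffusive correction carries a full $\lap^2\theta$, which is not controlled by the dissipation $\int \norm{\lap\theta}_{L^2}^2$ that we have in hand. The plan is to estimate $\norm{P\theta(t) - P\mathcal T_{0,t}\theta_0}_{L^2}$ by testing against the $e_k$ and integrating by parts, moving both Laplacians onto the backward-transported low-frequency functions $\mathcal T_{t,s}e_k$; their $H^2$ norms are controlled in terms of $\norm{u}_{C^2}$, and pairing with $\norm{\lap\theta}_{L^2}$ lets me invoke the averaged dissipation smallness. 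Balancing this regularity growth against the dissipation is precisely what produces the factor $\sqrt{\gamma}\,\norm{u}_{C^2}^{1/2}$ in~\eqref{e:choicelambdaweak}, and that condition is engineered to make the correction negligible. With the correction controlled, $\frac{1}{t_0}\int_0^{t_0}\norm{P\theta}_{L^2}^2\,ds$ is small, contradicting the bound $> \tfrac38\norm{\theta_0}_{L^2}^2$ from the second paragraph, and hence \eqref{e:thetaT0Ctsweak} must hold.
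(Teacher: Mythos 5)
Your overall architecture matches the paper's: argue by contradiction, use the energy identity to convert failure of decay into time-averaged smallness of $\norm{\lap\theta}_{L^2}^2$, split at frequency $\lambda_N$, control the low modes of the transport solution by weak mixing plus Weyl's law (your bookkeeping $\lambda_N^2 \cdot \lambda_N^{d/2} = \lambda_N^{(d+4)/2}$ is exactly the paper's), and compare $\theta$ to the transport solution $\phi$ by Duhamel. However, the step you yourself flag as the main obstacle is where the proposal breaks, and the mechanism you sketch for it would fail. You propose to bound the hyperdiffusive Duhamel correction by testing against $e_k$ and moving the Laplacians onto the backward-transported functions $\mathcal T_{t,s}e_k$, asserting that their $H^2$ norms are ``controlled in terms of $\norm{u}_{C^2}$.'' They are, but only exponentially: under transport by a $C^2$ velocity field, $\norm{\mathcal T_{t,s}e_k}_{H^2}$ grows like $e^{C\norm{u}_{C^2}\abs{t-s}}\lambda_k$. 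Since $t_0 = h^{-1}\bigl(1/(\tilde c\,\lambda_N^{(d+4)/4})\bigr)$ tends to infinity as $\gamma \to 0$, the resulting factor $e^{C\norm{u}_{C^2}t_0}$ is catastrophic, and the purely polynomial smallness provided by~\eqref{e:choicelambdaweak} (which buys you only $\lambda_N \sqrt{\gamma}\, t_0 \norm{u}_{C^2}^{1/2} \lesssim 1$) cannot absorb it. So the factor $\sqrt{\gamma}\,\norm{u}_{C^2}^{1/2}$ does not emerge from your balancing as claimed.

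The paper avoids transported test functions entirely. It estimates $\norm{\theta - \phi}_{L^2}^2$ directly by multiplying the difference equation $\partial_t(\theta-\phi) + u\cdot\grad(\theta-\phi) = -\gamma\lap^2\theta$ by $\theta - \phi$, so the only input from the transport structure is the trivial bound $\norm{\theta-\phi}_{L^2} \leq 2\norm{\theta_0}_{L^2}$; this leaves the term $\gamma\norm{\theta_0}_{L^2}\int_0^t \norm{\lap^2\theta}_{L^2}\,ds$. The missing ingredient in your proposal is the $H^2$ energy estimate for the hyperdiffusion equation itself: multiplying~\eqref{e:ad} by $\lap^2\theta$ and exploiting that the divergence-free transport term commutes up to $C\norm{u}_{C^2}\norm{\lap\theta}_{L^2}^2$ errors gives
\begin{equation*}
  2\gamma\int_0^t \norm{\lap^2\theta}_{L^2}^2\,ds
    \leq \norm{\lap\theta_0}_{L^2}^2
      + C_d \norm{u}_{C^2}\int_0^t \norm{\lap\theta}_{L^2}^2\,ds\,,
\end{equation*}
which is polynomial in $\norm{u}_{C^2}$ and $t$, and whose right-hand side is controlled by your contradiction hypothesis together with~\eqref{e:H1ByL2SmallCts}. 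Combining this with Cauchy--Schwarz yields the paper's Lemma~\ref{l:l2diffcts}, $\norm{\theta(t)-\phi(t)}_{L^2}^2 \leq \sqrt{2\gamma t}\,\norm{\theta_0}_{L^2}\paren[\big]{C_d\norm{u}_{C^2}\int_0^t\norm{\lap\theta}_{L^2}^2\,ds + \norm{\lap\theta_0}_{L^2}^2}^{1/2}$, and it is exactly this bound, not regularity of transported eigenfunctions, that produces the factor $\lambda_N\sqrt{\gamma}\,t_0\norm{u}_{C^2}^{1/2}$ that condition~\eqref{e:choicelambdaweak} is designed to kill. With that substitution your argument closes; without it, it does not.
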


If instead~$u$ is strongly mixing, then the analog of Lemma~\ref{l:H2smallweak}  is as follows.

\begin{lemma}\label{l:H2small}
 Suppose~$u$ is strongly mixing with rate function~$h$.
  There exists a finite dimensional $\tilde C > 0$ such that for all $\gamma$ sufficiently small the following holds:
  If $\lambda_N$ is an eigenvalue of the Laplace--Beltrami operator such that
  \begin{align}\label{e:choicelambda}
    2 h^{-1} \paren[\Big]{ \frac{1}{2 \lambda_N} }
      \leq \frac{1}{\tilde C \lambda_N \sqrt{\gamma} \norm{u}_{C^2}^{1/2} }\,,
  \end{align}
  and if~\eqref{e:H1ByL2SmallCts} holds,
  then~\eqref{e:thetaT0Ctsweak}  holds at a time $t_0$ given by
  \begin{equation}\label{e:t0strong}
    t_0 \defeq 2 h^{-1} \paren[\Big]{ \frac{1}{2 \lambda_N} } \,.
  \end{equation}
\end{lemma}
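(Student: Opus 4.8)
The plan is to argue by contradiction, reducing the claim to a lower bound on the time-averaged dissipation and then using the strong mixing hypothesis to produce that bound. Integrating the energy identity \eqref{e:energy} gives $\norm{\theta(t_0)}_{L^2}^2 = \norm{\theta_0}_{L^2}^2 - 2\gamma\int_0^{t_0}\norm{\lap\theta}_{L^2}^2\,ds$, and since $1-x\leq e^{-x}$ the desired bound \eqref{e:thetaT0Ctsweak} follows as soon as one shows $\int_0^{t_0}\norm{\lap\theta}_{L^2}^2\,ds \geq \tfrac18\lambda_N^2 t_0\norm{\theta_0}_{L^2}^2$. I would therefore assume the opposite, namely that the time average of $\norm{\lap\theta}_{L^2}^2$ over $[0,t_0]$ is smaller than $\tfrac18\lambda_N^2\norm{\theta_0}_{L^2}^2$, and derive a contradiction. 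Together with the monotonicity of $\norm{\theta}_{L^2}$ and the negation of \eqref{e:thetaT0Ctsweak}, this assumption pins the solution into the \emph{low-frequency, nearly energy-conserving} regime, which is exactly where the mixing of $u$ should instead force decay.

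Next I would extract a good intermediate time. By Chebyshev's inequality the averaged smallness of $\norm{\lap\theta}_{L^2}^2$ yields a time $t_1\in[0,t_0/2]$ at which $\norm{\lap\theta(t_1)}_{L^2}$ is comparable to $\lambda_N\norm{\theta_0}_{L^2}$; interpolating via $\norm{\grad\theta(t_1)}_{L^2}^2 \leq \norm{\lap\theta(t_1)}_{L^2}\norm{\theta(t_1)}_{L^2}$ then gives $\norm{\theta(t_1)}_{H^1}\leq C\lambda_N^{1/2}\norm{\theta_0}_{L^2}$. Starting Duhamel's formula at $t_1$ and writing $\bar{\mathcal S}$ for the pure transport solution operator of \eqref{e:transport}, I would expand $\theta(t_0) = \bar{\mathcal S}_{t_1,t_0}\theta(t_1) - \gamma\int_{t_1}^{t_0}\bar{\mathcal S}_{s,t_0}\lap^2\theta(s)\,ds$ and test against $g \defeq P_N\theta(t_0)/\norm{P_N\theta(t_0)}_{L^2}$, where $P_N$ is the spectral projection onto eigenvalues at most $\lambda_N$, so that $\norm{g}_{H^1}\leq C\lambda_N^{1/2}$ and $\langle\theta(t_0), g\rangle = \norm{P_N\theta(t_0)}_{L^2}$.

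The transport term is then handled by the strong mixing hypothesis of Definition~\ref{d:mixing}: since $t_0-t_1\geq t_0/2$ and $g,\theta(t_1)\in\dot H^1$, one gets $\abs{\langle\bar{\mathcal S}_{t_1,t_0}\theta(t_1), g\rangle}\leq h(t_0/2)\norm{\theta(t_1)}_{H^1}\norm{g}_{H^1}\leq C\,h(t_0/2)\lambda_N\norm{\theta_0}_{L^2}$, and the calibration $t_0 = 2h^{-1}(1/(2\lambda_N))$ from \eqref{e:t0strong} is chosen precisely so that $h(t_0/2)\lambda_N = \tfrac12$, making this at most a small constant times $\norm{\theta_0}_{L^2}$. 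For the diffusion error I would keep all four derivatives on $\theta$, using that $\bar{\mathcal S}$ is an $L^2$-isometry to bound $\gamma\int_{t_1}^{t_0}\abs{\langle\lap^2\theta, \bar{\mathcal S}_{t_0,s}g\rangle}\,ds \leq \gamma\int_{t_1}^{t_0}\norm{\lap^2\theta}_{L^2}\,ds \leq (\gamma t_0)^{1/2}\paren[\big]{\gamma\int_{t_1}^{t_0}\norm{\lap^2\theta}_{L^2}^2\,ds}^{1/2}$; the last factor is controlled by the higher-order energy inequality $\tfrac12\partial_t\norm{\lap\theta}_{L^2}^2 + \gamma\norm{\lap^2\theta}_{L^2}^2 \leq C_1\norm{u}_{C^2}\norm{\lap\theta}_{L^2}^2$ together with the smallness of $\norm{\lap\theta(t_1)}_{L^2}$ and of $\int\norm{\lap\theta}_{L^2}^2$. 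Condition \eqref{e:choicelambda}, which is equivalent to $\lambda_N\sqrt\gamma\,\norm{u}_{C^2}^{1/2}t_0\leq 1/\tilde C$, is exactly what forces the dominant $\lambda_N(\gamma\norm{u}_{C^2}t_0^2)^{1/2}$ part of this error to be a small multiple of $\norm{\theta_0}_{L^2}$.

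Combining the two estimates bounds $\norm{P_N\theta(t_0)}_{L^2}$ by a fraction strictly below $\norm{\theta_0}_{L^2}$. On the other hand, the standing assumption forces $\int_0^{t_0}\norm{Q_N\theta}_{L^2}^2 \leq \lambda_N^{-2}\int_0^{t_0}\norm{\lap\theta}_{L^2}^2$ to be small, so — because $\norm{\theta}_{L^2}$ barely decays under the negation of \eqref{e:thetaT0Ctsweak} — the low-frequency part $P_N\theta$ must carry almost all of the nearly conserved energy on average over $[0,t_0]$, which is incompatible with the mixing bound. I expect the main obstacle to be controlling the diffusion error for the bi-Laplacian and closing the bookkeeping that ties the low-mode mixing estimate to the full $L^2$ norm: unlike the advection--diffusion case, where the solution operator itself contracts all of $L^2$, strong mixing only controls the projection onto low modes, so one must separately argue that the energy mixing sweeps out of the low modes is genuinely dissipated rather than re-accumulating at high frequency. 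This is precisely where $\norm{u}_{C^2}$ enters and where the hyperdiffusion scaling in \eqref{e:choicelambda} differs from the classical $\alpha=1$ estimates.
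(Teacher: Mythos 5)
Your overall skeleton matches the paper's: reduce \eqref{e:thetaT0Ctsweak} to the time-averaged lower bound \eqref{e:lowerbound} via the energy identity \eqref{e:energy}, then argue by contradiction, letting mixing kill the low modes while the contradiction hypothesis controls the high modes. But your implementation of the mixing step is genuinely different. You run Duhamel from an intermediate Chebyshev time $t_1$ and test $\theta(t_0)$ against the normalized low-mode function $g = P_N\theta(t_0)/\norm{P_N\theta(t_0)}_{L^2}$, whereas the paper never tests at a single time: it compares $\theta$ with the free-transport solution $\phi$ of \eqref{e:transport} (Lemma~\ref{l:l2diffcts}), bounds the \emph{time-integrated} quantity $\int_{t_0/2}^{t_0}\norm{P_N\phi}_{L^2}^2\,ds \leq \lambda_N\int_{t_0/2}^{t_0}\norm{\phi(s)}_{H^{-1}}^2\,ds \leq \tfrac{t_0}{2}\lambda_N^2 h^2(t_0/2)\norm{\theta_0}_{L^2}^2$ using \eqref{e:H1ByL2SmallCts}, and closes with the numerical contradiction $1/8 > 1/4 - 1/16 - 1/16$. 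One consequence: the paper's route is \emph{quadratic} in $h$, so the calibration $\lambda_N h(t_0/2) = \tfrac12$ fixed by \eqref{e:t0strong} produces the comfortable margin $\tfrac14\lambda_N^2h^2(t_0/2) = 1/16$. Your route is \emph{linear} in $h$: the same calibration gives a mixing term of size roughly $h(t_0/2)\cdot\norm{\theta(t_1)}_{H^1}\norm{g}_{H^1} \approx 2^{-5/4}\norm{\theta_0}_{L^2} \approx 0.42\,\norm{\theta_0}_{L^2}$, which is \emph{not} ``a small constant times $\norm{\theta_0}_{L^2}$'' as you assert; whether your contradiction closes then hinges on constant-chasing you have not carried out. (A side benefit of your route, worth noting, is that the Chebyshev choice of $t_1$ replaces the paper's use of the hypothesis \eqref{e:H1ByL2SmallCts} on the initial data.)

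The genuine gap is in the closing step. Your Duhamel-plus-mixing estimate controls $\norm{P_N\theta(t_0)}_{L^2}$ at the single time $t_0$, so a contradiction requires a pointwise lower bound on $\norm{P_N\theta(t_0)}_{L^2}$, i.e.\ pointwise smallness of $\norm{(I-P_N)\theta(t_0)}_{L^2}$. But the contradiction hypothesis only yields the time average $\int_0^{t_0}\norm{(I-P_N)\theta}_{L^2}^2\,ds \leq \lambda_N^{-2}\int_0^{t_0}\norm{\lap\theta}_{L^2}^2\,ds$, which says nothing about the endpoint value $\norm{\lap\theta(t_0)}_{L^2}$. Your final paragraph retreats to the claim that $P_N\theta$ carries most of the energy ``on average over $[0,t_0]$'', but an averaged statement cannot be played off against a bound valid only at $t = t_0$; as written, the argument does not close, and this mismatch --- not the bi-Laplacian bookkeeping you flag --- is the real obstacle. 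It is fixable within your framework: choose \emph{both} times by Chebyshev, e.g.\ $t_1 \in [0, t_0/4]$ with $\norm{\lap\theta(t_1)}_{L^2}^2 \leq \tfrac12\lambda_N^2\norm{\theta_0}_{L^2}^2$ and $t_2 \in [3t_0/4, t_0]$ with $\norm{(I-P_N)\theta(t_2)}_{L^2}^2 \leq \tfrac12\norm{\theta_0}_{L^2}^2$ (both exist under the negation of \eqref{e:lowerbound}), define $g$ via $\theta(t_2)$, and use $t_2 - t_1 \geq t_0/2$ in the mixing bound. Since \eqref{e:choicelambda} gives $\gamma\lambda_N^2 t_0 \leq 1/\tilde C^2$ once $t_0\norm{u}_{C^2}\geq 1$ (which holds for small $\gamma$), energy monotonicity then gives $\norm{P_N\theta(t_2)}_{L^2}^2 \geq (\tfrac12 - \tfrac{1}{4\tilde C^2})\norm{\theta_0}_{L^2}^2$, i.e.\ a lower bound near $0.7\,\norm{\theta_0}_{L^2}$, against your upper bound $\approx 0.42\,\norm{\theta_0}_{L^2} + O(1/\tilde C)\norm{\theta_0}_{L^2}$. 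The contradiction survives, but only by this modest numerical margin --- precisely the verification your proposal omits.
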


Finally, for the proof of Theorem~\ref{t:strong} we need Weyl's Lemma (see for instance~\cite{MinakshisundaramPleijel49}), which describes the asymptotic growth of the eigenvalues of the Laplace--Beltrami operator.

\begin{lemma}[Weyl's Lemma]
Let $0 < \lambda_1 \leq \lambda_2 \leq\cdots$ be the eigenvalues of the Laplacian, where each eigenvalue is repeated according to its multiplicity. We have
 \begin{equation}\label{e:weyl}
      \lambda_j \approx \frac{4\pi\, \Gamma(\frac{d}{2}+1)^{\sfrac{2}{d}}}{\vol(M)^{\sfrac{2}{d}}}\,j^{\sfrac{2}{d}}\,,
    \end{equation}
    asymptotically as $j \to \infty$.
\end{lemma}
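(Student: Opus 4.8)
The plan is to prove Weyl's Lemma by the heat-trace method of Minakshisundaram and Pleijel, and then invert the resulting counting asymptotics. Since the eigenfunctions $\set{\phi_j}$ of $-\lap$ form an orthonormal basis of $L^2(M)$, the heat semigroup $e^{t\lap}$ has smooth kernel $K(t,x,y) = \sum_j e^{-\lambda_j t}\phi_j(x)\phi_j(y)$, and its trace is
\begin{equation*}
  Z(t) \defeq \sum_j e^{-\lambda_j t} = \int_M K(t,x,x)\,dx\,.
\end{equation*}
The strategy is to obtain the short-time asymptotics of $Z(t)$, convert them via a Tauberian theorem into the large-$\lambda$ asymptotics of the counting function $N(\lambda)\defeq \#\set{j : \lambda_j \leq \lambda}$, and then recover the stated growth of $\lambda_j$ by inversion.

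First I would establish the on-diagonal short-time expansion of the heat kernel. Using the Minakshisundaram--Pleijel parametrix construction, one builds an approximate kernel from the Euclidean Gaussian transplanted onto $M$ through geodesic normal coordinates, with curvature corrections absorbed into strictly lower-order terms. The leading behaviour is governed by the flat model, giving
\begin{equation*}
  K(t,x,x) = (4\pi t)^{-d/2}\paren[\big]{1 + O(t)}
  \qquad\text{as } t \to 0^+\,,
\end{equation*}
uniformly in $x$ by compactness of $M$. Integrating over $M$ then yields
\begin{equation*}
  Z(t) \sim (4\pi t)^{-d/2}\,\vol(M)
  \qquad\text{as } t \to 0^+\,.
\end{equation*}

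Next I would apply Karamata's Tauberian theorem to the spectral measure $d\nu = \sum_j \delta_{\lambda_j}$, whose Laplace transform is exactly $Z(t) = \int_0^\infty e^{-\lambda t}\,d\nu(\lambda)$. Since $Z(t) \sim (4\pi)^{-d/2}\vol(M)\,t^{-d/2}$, Karamata's theorem gives
\begin{equation*}
  N(\lambda) = \nu([0,\lambda])
    \sim \frac{(4\pi)^{-d/2}\vol(M)}{\Gamma(\frac d2 + 1)}\,\lambda^{d/2}
  \qquad\text{as } \lambda \to \infty\,.
\end{equation*}
Finally, since $\lambda_j$ is asymptotically the value at which $N$ reaches $j$, inverting this relation gives $\lambda_j^{d/2} \sim \Gamma(\frac d2+1)(4\pi)^{d/2}\vol(M)^{-1} j$, and raising to the power $2/d$ produces exactly
\begin{equation*}
  \lambda_j \approx \frac{4\pi\,\Gamma(\frac d2 + 1)^{2/d}}{\vol(M)^{2/d}}\, j^{2/d}\,,
\end{equation*}
as claimed.

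I expect the main obstacle to be the heat-kernel asymptotics: rigorously constructing the parametrix on a curved manifold and controlling the remainder uniformly in $x$ is the bulk of the analytic work, whereas the Tauberian step and the inversion are essentially formal once the leading coefficient $(4\pi)^{-d/2}$ is pinned down. A secondary subtlety is that Karamata's theorem yields only the leading term of $N(\lambda)$ with no remainder control; this is precisely the level of precision the statement ``$\approx$'' demands, so no sharper two-term Weyl-law estimate is needed here.
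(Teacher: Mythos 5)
The paper offers no proof of this lemma at all---it is stated as a classical fact with a citation to Minakshisundaram--Pleijel---and your heat-trace-plus-Karamata argument is precisely the method of that cited reference, so your proposal is correct and takes essentially the same (classical) approach. The only minor detail worth noting is that the paper indexes only the positive eigenvalues (dropping the zero eigenvalue of the constants), an index shift of one that does not affect the asymptotics or the constant in~\eqref{e:weyl}.
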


Momentarily postponing the proof of Lemmas~\ref{l:H2large}--\ref{l:H2small}, we prove Theorem~\ref{t:strong}.
\begin{proof}[Proof of Theorem~\ref{t:strong}]
  For the first assumption, we assume~$u$ is weakly mixing with rate function~$h$.
  Let~$\tilde c$, $\tilde C$ be the constants from Lemma~\ref{l:H2smallweak}.
  Note that the intermediate value theorem readily implies the existence of a unique~$\lambda_* > 0$ such that
  \begin{equation}\label{e:lambdaStarWeak}
    h^{-1}\paren[\Big]{ \frac{1}{\tilde c \lambda_*^{(d + 4) / 4} } }
      = \frac{1}{\tilde C \lambda_* \sqrt{\gamma} \norm{u}_{C^2}^{1/2} }\,.
  \end{equation}
  Further, it is easy to see that $\lambda_* \to \infty$ as $\gamma \to 0$.
    Thus, for all sufficiently small~$\gamma$, Weyl's lemma implies $\lambda_{j+1}-\lambda_{j}=o(\lambda_j)$ as $j \to \infty$. Hence, for all sufficiently large $\lambda_*$, one can always find~$N$ large enough such that
  \begin{equation}\label{e:lambdaStar}
    \frac{\lambda_*^2}{2} \leq \lambda_N^2 \leq \lambda_*^2 \,.
  \end{equation}

  Now choosing $c_0=\lambda_N^2$ and repeatedly applying Lemmas~\ref{l:H2large} and Lemma~\ref{l:H2smallweak}, we obtain an increasing sequence of times $(t'_k)$, such that $t'_k \to \infty$, $t_{k+1}'-t_k'\leq t_0$, and
  \begin{align*}
    \norm{\theta_s(t_k')}_{L^2}^2 \leq \exp \Big(-\frac{\gamma \lambda_N^2 t_k'}{4}\Big)\norm{\theta_{0}}_{L^2}^2\,.
  \end{align*}
  This immediately implies
  \begin{align}\label{e:tmpCts5}
    \tau_2^*(u,\gamma) \leq \frac{8 \ln 2}{\gamma\lambda_N^2}+t_0\,.
  \end{align}
  Choosing
  \begin{equation*}
    t_* \defeq
      \frac{\sqrt{2}}{\tilde C \lambda_* \sqrt{\gamma} \norm{u}_{C^2}^{1/2} }\,,
  \end{equation*}
  and using~\eqref{e:lambdaStarWeak}, \eqref{e:lambdaStar}, and~\eqref{e:tmpCts5} yields~\eqref{e:tau2weak} as claimed.

  The proof of the second assertion of Theorem~\ref{t:strong} is almost identical to that of the first assertion.
  The only change required is to replace Lemma~\ref{l:H2smallweak} with~\ref{l:H2small}.
\end{proof}

It remains to prove Lemmas~\ref{l:H2large}--\ref{l:H2small}.
\begin{proof}[Proof of Lemma~\ref{l:H2large}]
  Multiplying~\eqref{e:ad} by~$\theta$, integrating over~$M$ and using the fact that~$u$ is divergence free immediately yields~\eqref{e:energy}.
  The second assertion of Lemma~\ref{l:H2large} follows from this and Gronwall's lemma.
\end{proof}

For Lemmas~\ref{l:H2smallweak} and~\ref{l:H2small} we will need a standard result estimating the difference between~$\theta$ and solutions to the inviscid transport equation.

\begin{lemma}\label{l:l2diffcts}
  Let $\phi$ be the solution of~\eqref{e:transport} with initial data $\theta_0$.
  There exists a dimensional constant~$C_d$ such that for all~$t \geq 0$ we have
  \begin{align}\label{e:l2diff}
    \norm{\theta(t)-\phi(t)}_{L^2}^2
      \leq \sqrt{2\gamma t}\,\norm{\theta_0}_{L^2}
	\paren[\Big]{
	  C_d\, \norm{u}_{C^2}\int_0^t\norm{\lap \theta}_{L^2}^2 \,ds
	  +\norm{\lap \theta_0}_{L^2}^2}^{1/2}\,.
  \end{align}
\end{lemma}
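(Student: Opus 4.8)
The plan is to control the difference $w \defeq \theta - \phi$ by a single energy estimate, using that $w$ solves the same transport equation as $\phi$ but forced by the hyperdiffusion acting on $\theta$. Subtracting \eqref{e:transport} from \eqref{e:ad} with $\alpha = 2$ gives
\begin{equation*}
  \partial_t w + u \cdot \grad w = -\gamma \lap^2 \theta\,, \qquad w(0) = 0\,.
\end{equation*}
Pairing this with $w$ in $L^2$, the transport term $\langle u\cdot\grad w,\, w\rangle$ vanishes because $u$ is divergence free, so integrating in time and using $w(0)=0$ yields the identity
\begin{equation*}
  \norm{w(t)}_{L^2}^2 = -2\gamma \int_0^t \langle \lap^2\theta(s),\, w(s)\rangle \, ds\,.
\end{equation*}

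Next I would bound the right-hand side by Cauchy--Schwarz, first in space and then in time, to factor it as
\begin{equation*}
  \norm{w(t)}_{L^2}^2
    \leq 2\gamma \paren[\Big]{ \int_0^t \norm{\lap^2 \theta}_{L^2}^2 \, ds }^{1/2}
      \paren[\Big]{ \int_0^t \norm{w}_{L^2}^2 \, ds }^{1/2}\,.
\end{equation*}
The second factor is the easy one. Since the advection--hyperdiffusion semigroup is an $L^2$ contraction (Lemma~\ref{l:H2large}) and the transport flow is an $L^2$ isometry, both $\norm{\theta(s)}_{L^2}$ and $\norm{\phi(s)}_{L^2}$ are bounded by $\norm{\theta_0}_{L^2}$, so $\norm{w(s)}_{L^2}$ is controlled by $\norm{\theta_0}_{L^2}$ and therefore $(\int_0^t \norm{w}_{L^2}^2\,ds)^{1/2}$ is bounded by a constant times $\sqrt t\,\norm{\theta_0}_{L^2}$. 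Writing $2\gamma = \sqrt{2\gamma}\cdot\sqrt{2\gamma}$ and grouping one factor of $\sqrt{2\gamma}$ with $\sqrt t\,\norm{\theta_0}_{L^2}$ is exactly what produces the prefactor $\sqrt{2\gamma t}\,\norm{\theta_0}_{L^2}$ in \eqref{e:l2diff}.

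The crux, and the step I expect to be the main obstacle, is the first factor $\gamma \int_0^t \norm{\lap^2\theta}_{L^2}^2\,ds$: the forcing $-\gamma\lap^2\theta$ is fourth order, whereas the target bound only tolerates the second-order quantity $\norm{\lap\theta}_{L^2}$. To bridge this gap I would run the $H^2$ energy estimate for $\theta$ itself — the same computation already used in the proof of Lemma~\ref{l:comsmall}. Multiplying \eqref{e:ad} (with $\alpha=2$) by $\lap^2\theta$ and integrating gives
\begin{equation*}
  \tfrac12 \partial_t \norm{\lap\theta}_{L^2}^2 + \gamma\norm{\lap^2\theta}_{L^2}^2
    = -\langle \lap(u\cdot\grad\theta),\, \lap\theta\rangle\,.
\end{equation*}
Splitting $\lap(u\cdot\grad\theta) = u\cdot\grad\lap\theta + [\lap,\, u\cdot\grad]\theta$, the first piece integrates to zero by incompressibility, while the commutator involves at most two derivatives of $u$ against second derivatives of $\theta$; bounding it by $C\norm{u}_{C^2}\norm{\lap\theta}_{L^2}^2$ (using $\norm{\grad^2\theta}_{L^2}\approx\norm{\lap\theta}_{L^2}$ and the Poincar\'e inequality on $\T^d$) yields $\tfrac12 \partial_t \norm{\lap\theta}_{L^2}^2 + \gamma\norm{\lap^2\theta}_{L^2}^2 \leq C_1\norm{u}_{C^2}\norm{\lap\theta}_{L^2}^2$. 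Integrating in time and discarding the nonnegative term $\norm{\lap\theta(t)}_{L^2}^2$ then gives
\begin{equation*}
  2\gamma\int_0^t \norm{\lap^2\theta}_{L^2}^2\,ds
    \leq \norm{\lap\theta_0}_{L^2}^2 + C_d\norm{u}_{C^2}\int_0^t\norm{\lap\theta}_{L^2}^2\,ds\,,
\end{equation*}
which is precisely the expression under the square root in \eqref{e:l2diff}. Substituting this into the Cauchy--Schwarz bound above finishes the proof. The only genuinely delicate point is this commutator estimate, which is where the hypothesis $u\in C^2$ and the constant $\norm{u}_{C^2}$ enter; the rest is a routine combination of the two energy identities.
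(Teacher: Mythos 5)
Your proof is correct and is essentially the paper's own argument: both derive the equation for $w=\theta-\phi$, pair it with $w$ in $L^2$, and close the estimate with the same $H^2$ energy inequality $2\gamma\int_0^t\norm{\lap^2\theta}_{L^2}^2\,ds\le C_d\norm{u}_{C^2}\int_0^t\norm{\lap\theta}_{L^2}^2\,ds+\norm{\lap\theta_0}_{L^2}^2$, obtained by testing \eqref{e:ad} against $\lap^2\theta$ and using incompressibility. The only difference is bookkeeping in the Cauchy--Schwarz step: your bound $\norm{w(s)}_{L^2}\le 2\norm{\theta_0}_{L^2}$ loses a harmless factor of $2$ relative to \eqref{e:l2diff}, which the paper avoids by estimating $-\langle w,\lap^2\theta\rangle=-\norm{\lap\theta}_{L^2}^2+\langle\phi,\lap^2\theta\rangle\le\norm{\theta_0}_{L^2}\norm{\lap^2\theta}_{L^2}$, using that the transport flow preserves the $L^2$ norm and discarding the sign-definite term.
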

\begin{proof}
  Subtracting~\eqref{e:ad} and~\eqref{e:transport} shows
  \begin{align*}
    \partial_t (\theta-\phi)+u\cdot \grad (\theta-\phi)+\gamma \Delta^2\theta=0\,.
  \end{align*}
  Multiplying this by $\theta(t)-\phi(t)$  and integrating over space and time gives
  \begin{align}\label{e:Atmp1}
    \norm{\theta(t)-\phi(t)}_{L^2}^2
      = -2\gamma\int_0^t \int_M (\theta - \phi) \lap^2 \theta\,dx\,ds
      \leq 2\gamma \norm{\theta_0}_{L^2}\int_0^t\norm{\Delta^2 \theta}_{L^2}\,ds\,.
  \end{align}
On the other hand, multiplying~\eqref{e:ad} by~$\Delta^2\theta$ and integrating over~$M$ gives
\begin{align*}
\partial_t \norm{\Delta\theta}_{L^2}^2+2\ip{u\cdot \grad \theta, \Delta^2 \theta}+2\gamma \norm{\Delta^2\theta}_{L^2}^2=0\,.
\end{align*}
Integrating the middle term by parts, using the fact that~$u$ is divergence free, and integrating in time yields
\begin{align*}
2\gamma \int_0^t \norm{\Delta^2\theta}_{L^2}^2\,ds
  \leq C_d\norm{u}_{C^2}\int_0^t\norm{\lap \theta}_{L^2}^2\,ds
    +\norm{\lap \theta_0}_{L^2}^2\,,
\end{align*}
for some dimensional constant $C_d$.
Substituting this in~\eqref{e:Atmp1} and using the Cauchy--Schwartz inequality gives~\eqref{e:l2diff} as claimed.
\end{proof}

We now prove Lemma~\ref{l:H2smallweak}.
\begin{proof}[Proof of Lemma~\ref{l:H2smallweak}]
  We claim that our choice of $\lambda_N$ and $t_0$ will guarantee
  \begin{align}\label{e:lowerbound}
    \int_0^{t_0}\norm{\lap \theta(s)}_{L^2}^2\,ds \geq \frac{\lambda_N^2t_0\norm{\theta_{0}}_{L^2}^2}{8}\,.
  \end{align}
  Once this is established, integrating~\eqref{e:energy} in time immediately yields~\eqref{e:thetaT0Ctsweak}.

  Thus, to prove Lemma~\ref{l:H2smallweak}, we only need to prove~\eqref{e:lowerbound}.
  Suppose, for contradiction, the inequality~\eqref{e:lowerbound} does not hold.
  Letting $P_N \colon \dot L^2(M) \to \dot L^2(M)$ denote the orthogonal projection onto the span of the first~$N$ eigenfunctions of the Laplace--Beltrami operator, we observe
  \begin{align}
    \nonumber
    \MoveEqLeft
    \frac{\lambda_N^2 t_0 \norm{\theta_0}_{L^2}^2}{8}
      > \int_0^{t_0}\norm{\lap \theta(s)}_{L^2}^2\,ds
      \geq  \lambda_N^2\int_{\sfrac{t_0}{2}}^{t_0}\norm{(I-P_N)\theta(s)}_{L^2}^2\,ds
    \\
    \nonumber
      &\geq
	\frac{\lambda_N^2}{2}\int_{\sfrac{t_0}{2}}^{t_0}\norm{(I-P_N)\phi(s)}_{L^2}^2\,ds
	-\lambda_N^2\int_{\sfrac{t_0}{2}}^{t_0}\norm{(I-P_N)\paren[\big]{\theta(s)-\phi(s)}}_{L^2}^2\,ds
    \\
      \label{e:tmpcts2}
      &\geq \frac{\lambda_N^2 t_0}{4}\norm{\theta_{0}}_{L^2}^2-\frac{\lambda_N^2}{2}\int_{\sfrac{t_0}{2}}^{t_0}\norm{P_N \phi(s)}_{L^2}^2\,ds
      -\lambda_N^2\int_{0}^{t_0}\norm{\theta(s)-\phi(s)}_{L^2}^2\,ds\,.
  \end{align}
  We will now bound the last two terms in~\eqref{e:tmpcts2}.

  For the last term in~\eqref{e:tmpcts2}, we use Lemma~\ref{l:l2diffcts} to obtain
  \begin{align}\label{e:tmpCts4}
    \nonumber
    \MoveEqLeft
      \int_{0}^{t_0}\norm{\theta(s)-\phi(s)}_{L^2}^2\,ds
    \\
    \nonumber
      & \leq
	\int_{0}^{t_0}
	  \sqrt{2\gamma s}\,\norm{\theta_0}_{L^2}\paren[\Big]{
	    C_d \norm{u}_{C^2}\int_0^s\norm{\lap \theta(t)}_{L^2}^2\,dt+ \norm{\lap \theta_{0}}_{L^2}^2}^{1/2}\,ds\\
    \nonumber
    &\leq  C \sqrt{\gamma}\, t_0^{3/2} \norm{\theta_{0}}_{L^2}
      \paren[\Big]{
	\norm{u}_{C^2}\int_0^{t_0}\norm{\lap \theta(t)}_{L^2}^2\,dt
	+ \norm{\lap \theta_{0}}_{L^2}^2
      }^{1/2}
    \\
    &\leq C \sqrt{\gamma }\,t_0^{3/2}\lambda_N\norm{\theta_{0}}_{L^2}^2
      \paren[\Big]{\norm{u}_{C^2}\,t_0 +1}^{1/2}\,.
  \end{align}
  For the last inequality above, we used our assumption that the inequality~\eqref{e:lowerbound} does not hold.

  To estimate the second term on the right of~\eqref{e:tmpcts2}, let $e_j$ denote the eigenfunction of the Laplace--Beltrami operator corresponding to the eigenvalue~$\lambda_j$.
  Now
  \begin{align*}
    \int_{\sfrac{t_0}{2}}^{t_0}\norm{P_N\phi(s)}_{L^2}^2\,ds
      \leq \sum_{j = 1}^N \int_0^{t_0} \abs{\ip{\phi(s), e_j}}^2 \, ds
      \leq t_0 h^2(t_0) \norm{\phi_0}_{H^1}^2 \sum_{j= 1}^N \lambda_j \,.
  \end{align*}
  Using Weyl's lemma~\eqref{e:weyl} and the assumption~\eqref{e:H1ByL2SmallCts}, we see
  \begin{equation}\label{e:tmpCts6}
    \int_{\sfrac{t_0}{2}}^{t_0}\norm{P_N\phi(s)}_{L^2}^2\,ds
      \leq C t_0 h^2(t_0) \norm{\phi_0}_{L^2}^2 \lambda_N^{(d + 4) /2}\,,
  \end{equation}
  for some constant~$C = C(M)$.

  We now let $C_1$ be the larger of the constants appearing in~\eqref{e:tmpCts4} and~\eqref{e:tmpCts6}.
  Using these two inequalities in~\eqref{e:tmpcts2} shows
  \begin{equation}\label{e:weakcont}
    \tfrac{1}{8} > \tfrac{1}{4}
      - C_1 \lambda_N \sqrt{\gamma t_0} \paren[\big]{
	  1 + t_0 \norm{u}_{C^2} }^{1/2}
      - C_1 \lambda_N^{(d + 4) / 2} h^2(t_0) \,.
  \end{equation}
  If we choose~$\tilde c \geq \sqrt{16 C_1}$, then by equation~\eqref{e:t0weak} the last term on the right is at most $1/16$.
  Next, when $\gamma$ is sufficiently small we will have $t_0 \norm{u}_{C^2} \geq 1$.
  Thus, if~$\tilde C \geq 16 \sqrt{2} C_1 $ and $\lambda_N$ is the largest eigenvalue for which~\eqref{e:choicelambdaweak} holds, then the second term above is also at most~$1/16$.
  This implies $1/8 > 1/8$, which is the desired contradiction.
\end{proof}

The proof of Lemma~\ref{l:H2small} is very similar to that of Lemma~\ref{l:H2smallweak}.
\begin{proof}[Proof of Lemma~\ref{l:H2small}]
  Follow the proof of Lemma~\ref{l:H2smallweak} until~\eqref{e:tmpCts6}.
  Now, to estimate the second term on the right of~\eqref{e:tmpcts2}, the strongly mixing property of~$u$ gives
  \begin{align}
    \nonumber
    \int_{\sfrac{t_0}{2}}^{t_0}\norm{P_N\phi(s)}_{L^2}^2\,ds
      &\leq \lambda_N\int_{\sfrac{t_0}{2}}^{t_0}\norm{\phi(s)}_{H^{-1}}^2\,ds
      \leq \lambda_N \int_{\sfrac{t_0}{2}}^{t_0}h^2(s) \norm{\theta_{0}}_{H^1}^2\,ds
  \\
    \label{e:tmpCts3weak}
    &\leq  \frac{t_0}{2}\lambda_N h^2(\sfrac{t_0}{2})\,\norm{\lap \theta_0}_{L^2}\norm{\theta_0}_{L^2}
    \leq \frac{t_0}{2}\lambda_N^{2}h^2(\sfrac{t_0}{2})\, \norm{\theta_{0}}_{L^2}^2\,.
  \end{align}
  Above, the last inequality followed from interpolation and the assumption~\eqref{e:H1ByL2SmallCts}.

  Now let $C_1$ be the constant appearing in~\eqref{e:tmpCts4}.
  Using~\eqref{e:tmpCts4} and~\eqref{e:tmpCts3weak} in~\eqref{e:tmpcts2} implies
  \begin{equation*}
    \tfrac{1}{8} > \tfrac{1}{4}
      - C_1 \lambda_N \sqrt{\gamma t_0}
	\paren[\big]{ 1 + t_0 \norm{u}_{C^2} }^{1/2}
      - \tfrac14 {\lambda_N^2 } h^2(\sfrac{t_0}{2}).
  \end{equation*}
  If~$t_0$ is defined by~\eqref{e:t0strong}, then the last term above is at most~$1/16$.
  Moreover, if $\tilde C = 2^{9/2}\, C_1$ and $\lambda_N$ is the largest eigenvalue of the Laplace--Beltrami operator satisfying~\eqref{e:choicelambda}, then the second term above is also at most $1/16$.
  This again forces $1/8 > 1/8$, which is our desired contradiction.
\end{proof}

\bibliographystyle{halpha-abbrv.bst}
\bibliography{refs,preprints}
\end{document}